\newtheorem{thm}{Theorem}[section]
\newtheorem{cor}[thm]{Corollary}
\newtheorem{lem}[thm]{Lemma}
\newtheorem{prop}[thm]{Proposition}
\newtheorem{rem}[thm]{Remark}
\newtheorem{expl}[thm]{Example}
\newtheorem{conj}[thm]{Conjecture}
\def\Ass{{\mathsf{Asso}}}  
\def\Perm{{\mathsf{Perm}}}
\def\cl{{\textrm{cl}}}    
\def\R{{\mathbb R}}       
\def\Z{{\mathbb Z}}       
\def\H{{\mathscr H}}
\def\A{{\mathcal A}}
\def\F{{\mathcal F}}
\def\a{{\alpha}}
\def\pf{\mathcal G}
\def\G{\mathcal G}
\def\ww{\bf w_0}
\def\apr{{\geq -1}}       
\def\lr{\mathsf {Lr}}
\def\pid{\pi^c_\downarrow}
\newcommand{\scalprod}[2]{{\langle #1,#2\rangle}}                         
\newcommand{\set}[2]{\left\{#1\vphantom{#2}\right.\;\left|\;\vphantom{#1}#2\right\}}
\def\ra{{\rangle}}
\def\la{{\langle}}
\begin{document}


\title{Permutahedra and  Generalized Associahedra}

\author{Christophe  Hohlweg}
\address[Christophe Hohlweg]{Universit\'e du Qu\'ebec \`a Montr\'eal\\
LaCIM et D\'epartement de Math\'ematiques\\ C.P. 8888 Succ. Centre-Ville\\
Montr\'eal, Qu\'ebec, H3C 3P8\\ CANADA}
\email{hohlweg.christophe@uqam.ca}
\urladdr{http://www.lacim.uqam.ca/\~{}hohlweg}

\author{Carsten Lange}
\address[Carsten Lange]{Freie Universit{\"a}t Berlin\\
Institut f{\"u}r Mathematik und Informatik\\
Arnimallee 3\\
14195 Berlin\\
Germany}
\email{lange@math.tu-berlin.de}

\author{Hugh Thomas}
\address[Hugh Thomas]{Department of Mathematics and
Statistics\\University of New Brunswick\\
Fredericton, New Brunswick, E3B 5A3\\
CANADA}
\email{hugh@math.unb.ca}

\date{April 16, 2008}

\thanks{Hohlweg was partially supported by Canadian Research Chair and the Fields Institute.
Hohlweg is partly supported by a startup grant from PAFARC.  Thomas is partly
supported by a Discovery Grant from NSERC}

\begin{abstract}
\noindent Given a finite Coxeter system~$(W,S)$ and a Coxeter
element~$c$, or equivalently an orientation of the Coxeter graph of
$W$, we construct a simple polytope whose outer normal fan is
N.~Reading's Cambrian fan~$\mathcal F_c$, settling a conjecture of
Reading that this is possible. We call this polytope the
$c$-generalized associahedron. Our approach generalizes Loday's
realization of the associahedron (a type~$A$ $c$-generalized
associahedron whose outer normal fan is not the cluster fan but a
coarsening of the Coxeter fan arising from the Tamari lattice) to
any finite Coxeter group. A crucial role in the construction is
played by the $c$-singleton cones, the cones in the $c$-Cambrian fan
which consist of a single maximal cone from the Coxeter fan.

Moreover, if~$W$ is a Weyl group and the vertices of the permutahedron
are chosen in a lattice associated to~$W$, then we show that our
realizations have integer coordinates in this lattice.
\end{abstract}

\maketitle

\tableofcontents

\section*{Introduction}

Let $(W,S)$ be a finite Coxeter system acting by reflections on an
$\mathbb R$-Euclidean space. Let~$\pmb a$ be a point in the
complement of the hyperplanes corresponding to the reflections in~$W$.
The convex hull of the $W$-orbit of~$\pmb a$ is a simple
convex polytope: the well-known {\em permutahedron} $\Perm^{\pmb a}(W)$.
The normal fan of $\Perm^{\pmb a}(W)$ is the {\em Coxeter fan}~$\F$.

{\em Cluster fans} were introduced by S.~Fomin and A.~Zelevinsky
in their work on cluster algebras~\cite{fomin_zelevinsky}. To each 
Weyl group there corresponds a cluster fan which encodes important 
algebraic information including the exchange graph of the corresponding 
cluster algebra. One very natural question is to find realizations 
of these fans as normal fans of simple polytopes. This was first 
answered by F.~Chapoton, S.~Fomin, and A.~Zelevinsky 
in~\cite{chapoton_fomin_zelevinsky}: for each Weyl group~$W$, they 
construct a simple convex polytope whose normal fan is the cluster 
fan. Such a polytope is called a {\em generalized associahedron of type~$W$}.

In~\cite{reading1}, N.~Reading introduced {\em Cambrian lattices} which
generalize the Tamari lattice to any finite Coxeter group~$W$ and Coxeter
element~$c \in W$. The $c$-Cambrian lattice is defined as a quotient of the
weak order on~$W$, and therefore has an associated fan, the
{\em $c$-Cambrian fan}~$\F_c$, which is a coarsening of the Coxeter fan~$\F$.
N.~Reading conjectured that any $c$-Cambrian fan is the normal
fan of a simple convex polytope~\cite[Conjecture 1.1]{reading1}.

We will call a polytope with the $c$-Cambrian fan as normal fan a
{\em $c$-generalized associahedron}~$\Ass_c(W)$.  In~\cite{reading4}, 
N.~Reading and D.~Speyer showed that~$\F_c$ is a simplicial fan, so 
$c$-generalized associahedra are simple polytopes. In the case 
where~$W$ is a Weyl group, they also showed that $c$-Cambrian fans 
are all combinatorially isomorphic to the cluster fan. Moreover, 
for {\em bipartite}~$c$, they show that~$\F_c$ is linearly isomorphic 
to the cluster fan. In other words,~$\Ass_c(W)$ has the same
combinatorial type as a generalized associahedron of type~$W$,
and for bipartite~$c$,~$\Ass_c(W)$ is linearly isomorphic to a
generalized associahedron of type~$W$.

In this article, we construct $c$-generalized associahedra for all~$c$, 
settling Reading's conjecture.

\smallskip
Let us review the history of some realizations. For
symmetric groups, that is for Coxeter groups of type A,
$c$-generalized associahedra are combinatorially isomorphic to the
{\it classical associahedron} $\Ass(S_n)$, whose combinatorial
structure was first described by J.~Stasheff in
1963~\cite{stasheff}. The combinatorial structure in question is
the face lattice of a simple convex polytope whose $1$-skeleton is
isomorphic to the undirected Hasse diagram of the Tamari lattice
on the set~$Y_{n}$ of planar binary trees with $(n+1)$-leaves (see
for instance~\cite{lee}) and therefore a fundamental example of a
secondary polytope as described
in~\cite{Gelfend_Kapranov_Zelevinsky}. Numerous realizations of
the associahedron have been given,
see~\cite{chapoton_fomin_zelevinsky,loday} and the references
therein. An elegant and simple realization of~$\Ass(S_n)$ by
picking some of the inequalities for the
permutahedron~$\Perm(S_n)$ is due to S.~Shnider
\&~S.~Sternberg~\cite{shnider_sternberg} (for a corrected version
consider J.~Stasheff \&~S.~Shnider~\cite[Appendix~B]{stasheff2}).
This realization implies a surjective map from the vertices of the
permutahedron to the vertices of the associahedron, which turns
out to be the well-known surjection from~$S_{n}$ to~$Y_{n}$ that
maps the weak order on~$S_n$ to the Tamari lattice as
described in~\cite[Sec.~9]{bjoerner_wachs}. Recently, J.-L.~Loday
presented an algorithm to compute the vertex coordinates of this
realization,~\cite{loday}: label the vertices of the associahedron
by planar binary trees with~$n+2$ leaves and apply a simple
algorithm on trees to obtain integer coordinates in~$\R^{n}$. This
polytopal realization of the associahedron from the permutahedron
is referred to as {\em Loday's realization}. An important aspect
of this construction is the fact that the Tamari lattice is a
quotient lattice, as well as a sublattice, of the weak order
on~$S_n$.

\begin{figure}[b]
      \begin{center}
      \begin{minipage}{0.95\linewidth}
         \begin{center}
         \begin{overpic}
            [width=0.9\linewidth]{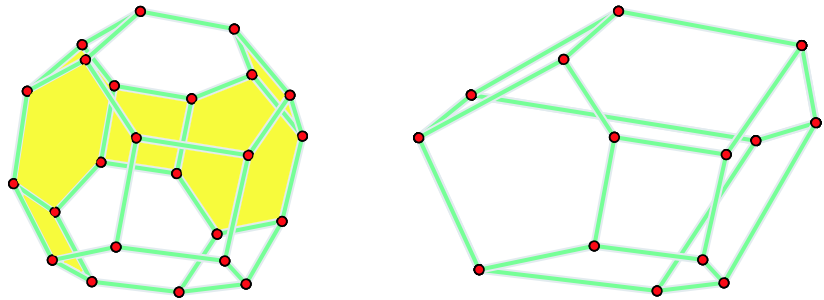}
         \end{overpic}
         \end{center}
         \caption[]{Obtaining the associahedron from the permutahedron for the Coxeter group~$S_4$
                    and Coxeter element~$c=s_1s_2s_3$.  The left picture shows the permutahedron
                    with the facets contained in the boundary of $c$-admissible half spaces translucent
                    and the facets contained in the boundary of non $c$-admissible half spaces shaded.
                    The picture to the right shows the associahedron obtained from the
                    permutahedron after removal of all non $c$-admissible half spaces. }
         \label{fig:remove_facets_c=_123}
      \end{minipage}
      \end{center}
\end{figure}

\begin{figure}[t]
      \begin{center}
      \begin{minipage}{0.95\linewidth}
         \begin{center}
         \begin{overpic}
            [width=0.9\linewidth]{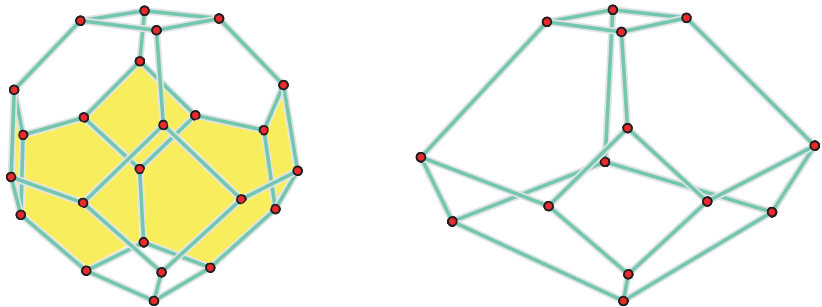}
         \end{overpic}
         \end{center}
         \caption[]{Obtaining the associahedron from the permutahedron for the Coxeter group~$S_4$
                    and Coxeter element~$c=s_2s_1s_3$. The left picture shows the permutahedron
                    with the facets contained in the boundary of $c$-admissible half spaces translucent
                    and the facets contained in the boundary of non $c$-admissible half spaces shaded.
                    The picture to the right shows the associahedron obtained from the
                    permutahedron after removal of all non $c$-admissible half spaces.}
         \label{fig:remove_facets_c=_213}
      \end{minipage}
      \end{center}
\end{figure}

For Coxeter groups of types~$A$ and~$B$, the first two authors
gave recently a Loday-type realization of any $c$-generalized
associahedron~\cite{HL}, and they showed that Loday's realization
of the associahedron is a $c$-generalized associahedron for a 
particular~$c$.  For hyperoctahedral groups, that
is Coxeter groups of type~$B$, the generalized associahedron is
called a {\em cyclohedron}.  It was first described by R.~Bott and
C.~Taubes in 1994~\cite{bott_taubes} in connection with knot theory,
and rediscovered independently by R.~Simion~\cite{simion}. See
also~\cite{chapoton_fomin_zelevinsky,markl,postnikov,reiner,simion};
none of these realizations is similar to Loday's (type $A$)
realization.

\smallskip
The realization of~$\Ass_c(W)$ for any finite Coxeter group~$W$ and
any Coxeter element~$c$ given in this article generalizes to any
finite Coxeter group the approach initiated by Shnider~\&~Sternberg
in type $A$, and extended by the two first authors to types~$A$ and
$B$.

\smallskip

Our construction of the $c$-generalized associahedron is very straightforward.
Start from~$\Perm^{\pmb a}(W)$ and its H-representation by non-redundant half 
spaces and fix a particular reduced expression of the longest element~$w_0$ of~$W$
determined by~$c$. The prefixes of~$w_0$ up to commutation of commuting simple reflections
will be called {\em $c$-singletons}. A half space~$\H$ of the H-representation
of~$\Perm^{\pmb a}(W)$ is {\em $c$-admissible} if its boundary contains a vertex
of~$\Perm^{\pmb a}(W)$ that corresponds to a $c$-singleton. The intersection of all
$c$-admissible half spaces is a $c$-generalized associahedron~$\Ass^{\pmb a}_c(W)$
and its normal fan is the $c$-Cambrian fan $\F_c$ (Corollary~\ref{cor:Intro}).
The selection of $c$-admissible and the removal of not $c$-admissible half spaces for
the permutahedron of type~$A$ and distinct choices for the Coxeter element is illustrated
in Figure~\ref{fig:remove_facets_c=_123} and Figure~\ref{fig:remove_facets_c=_213}.

Moreover, if $W$ is a Weyl group and the vertices of the permutahedron~$\Perm^{\pmb a}(W)$ 
are chosen in a suitable lattice associated to $W$, then we show that $\Ass_c^{\pmb a}(W)$ 
has integer coordinates in this lattice (Theorem~\ref{thm:IntegerCoordinates}).

\smallskip
Another interesting aspect of this construction is that we are able to recover the
$c$-cluster complex: relating cluster fans to quiver theory, R.~Marsh, M.~Reineke 
and A.~Zelevinsky introduce in~\cite{MRZ} what N.~Reading and D.~Speyer call the 
{\em $c$-cluster fan} in~\cite{reading4}, and its associated simplicial complex
{\em the $c$-cluster complex}. A $c$-cluster fan is a generalization of the cluster 
fan to any finite Coxeter group~$W$ and Coxeter element $c\in W$ ($c$ bipartite is 
then the traditional case); its applications in quiver representations are most 
natural for~$W$ of types~$A$,~$D$ and~$E$.

By replacing the natural labeling of the maximal faces
of~$\Ass_c^{\pmb a}(W)$ by a labeling that uses almost positive
roots only, we obtain the $c$-cluster complex. This replacement is
determined by an easy combinatorial rule as stated in
Theorem~\ref{thm:Explif_c}. This suggests that these constructions
will play an important role in the study of $c$-cluster complexes
and related structures.

\smallskip
This article is organized as follows. In~\S\ref{se:CSing}, we recall
some facts about finite Coxeter groups, Coxeter sortable elements,
and Cambrian lattices. Additionally, the important notion of a
$c$-singleton is defined and fundamental properties are proven. 
In~\S\ref{se:CambFan}, we recall some facts about fans, in particular
of Coxeter and Cambrian fans, and give a precise combinatorial
description of the rays of Cambrian fans. In~\S\ref{se:CamPoly}, we
state and prove our main result (Theorem~\ref{thm:main_theorem}).
Finally, in~\S\ref{se:Remaarks} we study some specific examples of
finite reflection groups. We work out the dihedral case explicitly
to show that the vertex barycentres of the permutahedra and
associahedra coincide and we explain how the realizations given 
in~\cite{HL} for type~$A$ and~$B$ are a particular instances of the
construction described in this article.

In a sequel~\cite{BHLT}, we describe the isometry classes of these
realizations.

The construction presented in this article has been implemented as
the set of functions {\tt CAMBRIAN} to be used with the library {\tt
CHEVIE} for {\tt GAP}~\cite{gap,chevie} and can be found on the
first author's web page,~\cite{hohlweg}.

\section{Coxeter-singletons and Cambrian lattices}\label{se:CSing}

Let $(W,S)$ be a finite Coxeter system. We denote by~$e$ the identity of~$W$ and
by $\ell: W\to \mathbb N$ the length function on~$W$. Let $n=|S|$ be the rank of~$W$.
Denote by $w_0$ the unique element of maximal length in~$W$.

The {\em (right) weak order} $\leq$ on $W$ can be defined by~$u\leq v$ if and only
if there is a $v'\in W$ such that $v=uv'$ and $\ell(v)=\ell(u)+\ell(v')$. The
{\em descent set} $D(w)$ of~$w\in W$ is $\{s\in S\,|\, \ell(ws)<\ell(w)\}$. A
{\em cover} of~$w\in W$ is an element~$ws$ such that $s\notin D(w)$.

The subgroup~$W_I$ generated by~$I\subseteq S$ is a {\em (standard) parabolic subgroup}
of~$W$ and the set of minimal length (left) coset representatives of~$W/W_I$ is given by
\[
  W^I =\{x\in W \,|\, \ell(xs)>\ell(x),\,\forall s\in I\}
      = \{ x \in W \,|\, D(x) \subseteq S\setminus I \}.
\]
Moreover, each~$w \in W$ has a unique decomposition~$w = w^I w_I$ where~$w^I \in W^I$
and~$w_I \in W_I$. Moreover, $\ell(w) = \ell(w^I) + \ell(w_I)$, see~\cite[\S 5.12]{humphreys}.
The pair~$(w^I, w_I)$ is often called the {\em parabolic components} of~$w$ along~$I$.
For~$s\in S$ we follow N.~Reading's notation and set $\la s\ra:=S\setminus\{s\}$.

Let~$c$ be a Coxeter element of~$W$, that is, the product of the simple reflections of~$W$
taken in some order, and fix a reduced expression for $c$.

\subsection{$c$-sortable elements}

For $I\subset S$, we denote by~$c_{(I)}$ the subword of~$c$ obtained by considering
only simple reflections in~ $I$. Obviously, $c_{(I)}$ is a Coxeter element of~$W_I$.
For instance, take~$W=S_5$ and $S=\{s_i\,|\,1\leq i\leq 4\}$ where~$s_i$ denotes the
simple transposition~$(i,i+1)$. If $c=s_1s_3s_4s_2$ and $I=\{s_2,s_3\}$
then~$c_{(I)}=s_3s_2$. Consider the possible ways to write~$w\in W$ as a reduced
subword of the infinite word $c^\infty=cccccc\dots$. In~\cite[\S2]{reading2},
N.~Reading defines the {\em $c$-sorting word} of~$w\in W$ as the reduced subword
of~$c^\infty$ for~$w$ which is lexicographically first as a sequence of positions.
The~$c$-sorting word of~$w$ can be written as $c_{(K_1)}c_{(K_2)}\dots c_{(K_p)}$
where~$p$ is minimal for the property:
\[
 w=c_{(K_1)}c_{(K_2)}\dots c_{(K_p)}\quad \textrm{and} \quad\ell(w)=\sum_{i=1}^p |K_i|.
\]
\begin{figure}
      \begin{center}
      \begin{minipage}{0.95\linewidth}
         \begin{center}
         \begin{overpic}
            [width=0.9\linewidth]{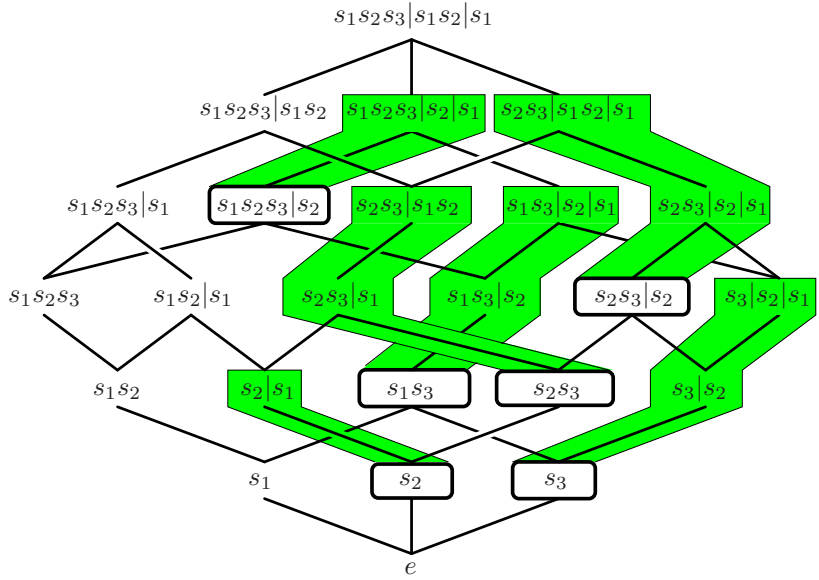}
            \put(40.5,70){$s_1s_2s_3|s_1s_2|s_1$}
            \put(24,58.5){$s_1s_2s_3|s_1s_2$}
            \put(42,58.5){$s_1s_2s_3|s_2|s_1$}
            \put(61,58.5){$s_2s_3|s_1s_2|s_1$}
            \put(7.7,46.7){$s_1s_2s_3|s_1$}
            \put(26.1,46.7){$s_1s_2s_3|s_2$}
            \put(43.2,46.7){$s_2s_3|s_1s_2$}
            \put(61.7,46.7){$s_1s_3|s_2|s_1$}
            \put(80.3,46.7){$s_2s_3|s_2|s_1$}
            \put(0.5,35.5){$s_1s_2s_3$}
            \put(18.3,35.5){$s_1s_2|s_1$}
            \put(36.5,35.5){$s_2s_3|s_1$}
            \put(54.4,35.5){$s_1s_3|s_2$}
            \put(72.5,35.5){$s_2s_3|s_2$}
            \put(88.5,35.5){$s_3|s_2|s_1$}
            \put(10.9,24){$s_1s_2$}
            \put(29,24){$s_2|s_1$}
            \put(47,24){$s_1s_3$}
            \put(65,24){$s_2s_3$}
            \put(82,24){$s_3|s_2$}
            \put(30,13){$s_1$}
            \put(48.5,13){$s_2$}
            \put(66.3,13){$s_3$}
            \put(49.2,2){$e$}
         \end{overpic}
         \end{center}
         \caption[]{$c=s_1s_2s_3$}
         \label{fig:cambrian_lattice_projection_123}
      \end{minipage}
      \end{center}
\end{figure}

The sequence~$c_{(K_1)}, \ldots,c_{(K_p)}$ associated to the $c$-sorting word
for~$w$ is called {\em the $c$-factorization} of~$w$. The $c$-factorization
of~$w$ is independent of the chosen reduced word for~$c$ but depends on the Coxeter
element~$c$. In general the $c$-factorization does not yield a nested sequence
$K_1, \ldots, K_p$ of subsets of~$S$. An element~$w \in W$ is called
{\em $c$-sortable} if $K_1 \supseteq K_2 \supseteq \ldots \supseteq K_p$. It
is clear that for any chosen Coxeter element~$c$, the identity~$e$ is $c$-sortable,
and Reading proves in~\cite{reading2} that the longest element $w_0 \in W$ is
$c$-sortable as well. The $c$-factorization of $w_0$ is of particular importance
for us and is denoted by~$\ww$. To illustrate these notions, consider~$W=S_4$ with
generators~$S=\{s_1,s_2,s_3\}$. The weak order of~$S_4$ with elements represented
by their $c$-factorization is shown in Figure~\ref{fig:cambrian_lattice_projection_123}
for $c=s_1s_2s_3$ and Figure~\ref{fig:cambrian_lattice_projection_213} for $c=s_2s_1s_3$
(the delimiter~`$|$' indicates the end of $K_i$ and the beginning of $K_{i+1}$).
Moreover, the background colour carries additional information: The background of~$w$
is white if and only if~$w$ is $c$-sortable.

\subsection{$c$-Cambrian lattice}
N.~Reading shows that the $c$-sortable elements constitute a sublattice of the weak
order of~$W$ which is called the {\em $c$-Cambrian lattice},~\cite{reading1,reading3}.
A Cambrian lattice is also a lattice quotient of the weak order on~$W$. In particular,
there is a downward projection~$\pi^c_\downarrow$ from~$W$ to the $c$-sortable elements
of~$W$ which maps~$w$ to the maximal $c$-sortable element below~$w$. Hence,~$w$ is
$c$-sortable if and only if~$\pi^c_\downarrow(w)=w$, \cite[Proposition~3.2]{reading3}.
It is easy to recover~$\pi^c_\downarrow$ in Figure~\ref{fig:cambrian_lattice_projection_123}
and~\ref{fig:cambrian_lattice_projection_213}. A $c$-sortable element~$w$ (white background)
is projected to itself; an element~$w$ which is not $c$-sortable (coloured background)
is projected to the (maximal) boxed $c$-sortable element below the coloured component
containing~$w$. For instance in Figure~\ref{fig:cambrian_lattice_projection_213},
we consider~$c=s_2s_1s_3$ and have $\pi^c_\downarrow(s_2s_3s_2)=s_2s_3s_2$ and
$\pi^c_\downarrow(s_3s_2s_1)=\pi^c_\downarrow(s_3s_2)=\pi^c_\downarrow(s_3)=s_3$.
\begin{figure}
      \begin{center}
      \begin{minipage}{0.95\linewidth}
         \begin{center}
         \begin{overpic}
           [width=0.9\linewidth]{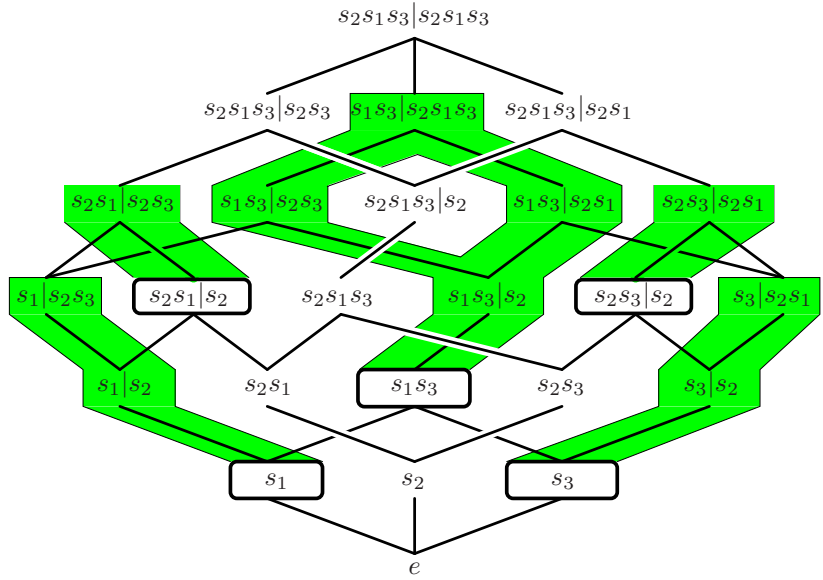}
            \put(40.5,70){$s_2s_1s_3|s_2s_1s_3$}
            \put(24,58.5){$s_2s_1s_3|s_2s_3$}
            \put(42,58.5){$s_1s_3|s_2s_1s_3$}
            \put(61,58.5){$s_2s_1s_3|s_2s_1$}
            \put(7.7,47.2){$s_2s_1|s_2s_3$}
            \put(26,47.2){$s_1s_3|s_2s_3$}
            \put(43.7,47.2){$s_2s_1s_3|s_2$}
            \put(62,47.2){$s_1s_3|s_2s_1$}
            \put(80.3,47.2){$s_2s_3|s_2s_1$}
            \put(1,35.5){$s_1|s_2s_3$}
            \put(17.3,35.5){$s_2s_1|s_2$}
            \put(36,35.5){$s_2s_1s_3$}
            \put(54,35.5){$s_1s_3|s_2$}
            \put(72,35.5){$s_2s_3|s_2$}
            \put(89,35.5){$s_3|s_2s_1$}
            \put(10.9,24.5){$s_1|s_2$}
            \put(29,24.5){$s_2s_1$}
            \put(47,24.5){$s_1s_3$}
            \put(65,24.5){$s_2s_3$}
            \put(83,24.5){$s_3|s_2$}
            \put(31.5,13){$s_1$}
            \put(48.3,13){$s_2$}
            \put(66.7,13){$s_3$}
            \put(49.2,2){$e$}
         \end{overpic}
         \end{center}
         \caption[]{$c=s_2s_1s_3$. }
         \label{fig:cambrian_lattice_projection_213}
      \end{minipage}
      \end{center}
\end{figure}

We say that~$w\in W$ is {\em $c$-antisortable} if~$ww_0$ is $c^{-1}$-sortable.
We have therefore a projection~$\pi_c^\uparrow$ from~$W$ to the set of $c$-antisortable
elements of~$W$ which takes~$w$ to the minimal $c$-antisortable element above~$w$.
For example in Figure~\ref{fig:cambrian_lattice_projection_213} we have
$\pi_c^\uparrow(s_1s_3)=s_1s_3s_2s_1s_3$.
The maps $\pi^c_\downarrow$ and $\pi_c^\uparrow$ have the same fibres, that is,
\[
  \left(\pi^c_\downarrow\right)^{-1}\pi^c_\downarrow(w)
    = \left(\pi_c^\uparrow\right)^{-1}\pi_c^\uparrow(w).
\]
These fibers are intervals in the weak order as shown by N.~Reading,~\cite[Theorem~1.1]{reading3}
and the fibre that contains~$w$ is $[\pi^c_\downarrow(w),\pi_c^\uparrow(w)]$.

\subsection{$c$-singletons}

We introduce a particularly important subclass of $c$-sortable elements: an element~$w\in W$
is a {\em $c$-singleton} if $\big(\pi^c_\downarrow\big)^{-1}(w)$ is a singleton. It is
easy to read off $c$-singletons in Figures~\ref{fig:cambrian_lattice_projection_123}
and~\ref{fig:cambrian_lattice_projection_213}: An element is a $c$-singleton if and only if
its background colour is white and it is not boxed, that is, $s_2s_1s_3$ in
Figure~\ref{fig:cambrian_lattice_projection_213} is a $c$-singleton while neither
$s_1s_3s_2$ nor $s_2s_3s_2$ are $c$-singletons.

\medskip
We now prove some useful properties of $c$-singletons.

\begin{prop}\label{prop:SingCFibers} Let~$w \in W$. The following propositions are equivalent.
\begin{compactenum}[(i)]
 \item $w$ is a $c$-singleton;
 \item $w$ is $c$-sortable and $ws$ is $c$-sortable for all $s\notin D(w)$;
 \item $w$ is $c$-sortable and $c$-antisortable.
\end{compactenum}
\end{prop}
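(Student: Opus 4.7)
The plan is to reduce all three equivalences to two ingredients already quoted in the excerpt: Reading's description of the fibers of $\pi^c_\downarrow$ as the weak-order intervals $[\pi^c_\downarrow(w),\pi_c^\uparrow(w)]$, and the fact that $\pi^c_\downarrow$ (being the downward projection of a lattice quotient of the weak order) is order-preserving.

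The equivalence (i)$\Leftrightarrow$(iii) is essentially formal. By definition $(\pi^c_\downarrow)^{-1}(w)=\{w\}$ forces $\pi^c_\downarrow(w)=w$, so $w$ is already $c$-sortable, and the fiber $[w,\pi_c^\uparrow(w)]$ reduces to $\{w\}$ exactly when $\pi_c^\uparrow(w)=w$, i.e.\ when $w$ is also $c$-antisortable. Conversely, being simultaneously $c$-sortable and $c$-antisortable collapses the fiber interval to $\{w\}$, so $w$ is a $c$-singleton.

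For (i)$\Rightarrow$(ii) I would use order-preservation of $\pi^c_\downarrow$. If $w$ is a $c$-singleton and $s\notin D(w)$, then $ws$ covers $w$, and
\[
  w=\pi^c_\downarrow(w)\leq \pi^c_\downarrow(ws)\leq ws.
\]
Since $ws$ covers $w$, the middle term equals $w$ or $ws$; the first option is forbidden because the fiber of $w$ is the singleton $\{w\}$ and $ws\neq w$, so $\pi^c_\downarrow(ws)=ws$, i.e.\ $ws$ is $c$-sortable.

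Finally (ii)$\Rightarrow$(i) is a short contrapositive argument. If $w$ is not a $c$-singleton then its fiber $[w,\pi_c^\uparrow(w)]$ is a nontrivial interval in the weak order, hence contains some cover $ws$ of $w$ with $s\notin D(w)$. This cover satisfies $\pi^c_\downarrow(ws)=w\neq ws$, so $ws$ fails to be $c$-sortable, contradicting (ii). The only real subtlety is making the order-preservation of $\pi^c_\downarrow$ explicit; since this is a standard property of projections onto lattice quotients and is used throughout Reading's work on Cambrian lattices, it can simply be invoked, and the rest of the argument is pure bookkeeping with the interval description of the fibers.
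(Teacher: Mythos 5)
Your proof is correct and uses exactly the two ingredients the paper invokes, namely that the fiber containing $w$ is the interval $[\pi^c_\downarrow(w),\pi_c^\uparrow(w)]$ and that $\pi^c_\downarrow$ is order preserving; you simply spell out the bookkeeping that the paper leaves implicit. The only nit is that in (ii)$\Rightarrow$(i) one should explicitly dispatch the case where $w$ is not $c$-sortable (then (ii) fails at once) before describing the fiber as $[w,\pi_c^\uparrow(w)]$, but this is a trivial remark.
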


\begin{proof}
   `(i) is equivalent to (iii)' and `(i) is equivalent to (ii)' follow from
    the fact that the fibre containing $w$ is $[\pi^c_\downarrow(w),\pi_c^\uparrow(w)]$
    and that the map~$\pi^c_\downarrow$ is order preserving.
\end{proof}
\noindent
It follows that~$w_0$ and~$e$ are $c$-singletons.

\medskip
The word property says that any pair of reduced expressions for~$w \in W$ can be linked
by a sequence of braid relation transformations. In particular, the set
\[
   S(w) := \{s_i\in S\,|\, w= s_1\dots s_{\ell(w)} \hbox{ is reduced}\}
         = \bigcap_{{\scriptstyle I\subset S \atop \scriptstyle w\in W_I}}I
\]
is independent of the chosen reduced expression for~$w$. It is clear that~$w\in W_{S(w)}$
and that $S(w)=K_1$ if~$w$ is $c$-sortable with $c$-factorization
$c_{(K_1)}c_{(K_2)}\dots c_{(K_p)}$.

Two reduced expressions for~$w\in W$ are equivalent {\em up to commutations} if they
are linked by a sequence of braid relation of order~$2$, that is, by commutations. Let~$\bf u,\bf w$
be reduced expressions for~$u,w \in W$. Then~$\bf u$ is a {\em prefix of~$\bf w$ up to
commutations} if~$\bf u$ is the prefix of a reduced expression~${\bf w'}$ and~${\bf w'}$ is equivalent
to~$\bf w$ up to commutations. We now state the main result of this section.
Its proof is deferred until
after Proposition~\ref{prop:Prefixes}.

\begin{thm}\label{thm:cSingleton}
   Let~$w$ be in~$W$. Then~$w$ is a $c$-singleton if and only if~$w$
   is a prefix of $\ww$ up to commutations.
\end{thm}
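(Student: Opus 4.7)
The plan is to use Proposition~\ref{prop:SingCFibers}, which characterizes $c$-singletons via the double condition of being simultaneously $c$-sortable and $c$-antisortable (equivalently, $c$-sortable with all covers $ws$ for $s\notin D(w)$ also $c$-sortable).

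For the $(\Leftarrow)$ direction, write $\ww=c_{(L_1)}\cdots c_{(L_p)}$ with $L_1\supseteq\cdots\supseteq L_p$. Any literal prefix of $\ww$ has a $c$-sorting factorization of the form $c_{(L_1)}\cdots c_{(L_{i-1})}c_{(L'_i)}$ with $L'_i\subseteq L_i$, hence the corresponding element is $c$-sortable; since $c$-sortability depends only on the element and not its reduced expression, this extends to prefixes of $\ww$ up to commutations. For $c$-antisortability I would use Proposition~\ref{prop:SingCFibers}(ii) and verify that for every $s\notin D(w)$, $ws$ is $c$-sortable. This splits into two subcases: either $s$ is the next letter in some commutation-rearrangement of $\ww$ after a prefix for $w$ (so $ws$ is itself a prefix, hence $c$-sortable by the preceding argument), or $s$ cannot be commuted to the front of the remaining suffix. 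Handling the second subcase requires a direct analysis of how the $c$-sorting factorization behaves under right multiplication by a non-descent, and is the main technical obstacle of this direction.

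For the $(\Rightarrow)$ direction, I would proceed by induction on $\ell(w_0)-\ell(w)$, the base case $w=w_0$ being immediate. For the inductive step, given a $c$-singleton $w\neq w_0$, set $v:=ww_0$ (which is $c^{-1}$-sortable and nontrivial since $w$ is $c$-antisortable), let $t$ be the last letter of the $c^{-1}$-sorting word of $v$, and put $s:=w_0tw_0\in S$, which is a simple reflection since conjugation by $w_0$ permutes $S$. A short length calculation yields $s\notin D(w)$ and $wsw_0=vt$; since $vt$ is $c^{-1}$-sortable (the $c^{-1}$-sorting word of $v$ with its last letter removed is itself a $c^{-1}$-sorting word of its element), $ws$ is $c$-antisortable, and by Proposition~\ref{prop:SingCFibers}(ii) applied to $w$ it is also $c$-sortable, hence a $c$-singleton. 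The induction hypothesis then provides a reduced expression of $ws$ that is a prefix of some rearrangement of $\ww$ up to commutations.

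The delicate closing step is to arrange this reduced expression to end with the specific letter $s$, so that removing the last letter yields the desired prefix expression for $w$; not every reduced expression for $ws$ ending in $s$ will a priori appear as a prefix of a commutation-equivalent rearrangement of $\ww$, since reduced words for $ws$ are braid-equivalent rather than merely commutation-equivalent. I identify this as the main obstacle of the forward direction, to be resolved by combining the induction hypothesis with a careful analysis of which reduced expressions for $ws$ can appear as prefixes of rearrangements of $\ww$, likely via the commutation classes (or heap) of the canonical word $\ww$.
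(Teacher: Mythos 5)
Your overall strategy matches the paper's closely: both directions run through Proposition~\ref{prop:SingCFibers}, and your $(\Rightarrow)$ argument --- setting $v = ww_0$, taking the last letter $t$ of the $c^{-1}$-sorting word of $v$, conjugating to $s = w_0 t w_0$, checking $s\notin D(w)$ and $wsw_0 = vt$, and concluding that $ws$ is a $c$-singleton covering $w$ --- is exactly the construction in the paper's proof. The two ``main obstacles'' you flag are genuine, and both are discharged in the paper by the same technical device, Lemma~\ref{lem:CSing}: if a $c$-singleton $w$ is covered by the $c$-sortable element $ws$, then in the $c$-factorization of $ws$ the letter $s$ either sits at the far right or is inserted into a single block $K_i$ \emph{and commutes with every letter to its right}. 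This lemma drives Proposition~\ref{prop:Prefixes} (the paper's version of your $(\Leftarrow)$ direction, proved there by peeling off one letter at a time), and it is precisely what resolves your ``delicate closing step'' for $(\Rightarrow)$: along the upward chain of $c$-singleton covers, the letter added at each stage commutes past the entire suffix, so by commutations alone it may be moved to the rightmost position and then deleted, yielding a prefix expression for the predecessor.

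There is, however, one genuine gap in your $(\Leftarrow)$ direction that precedes the obstacle you flag. You argue that literal prefixes of $\ww$ are $c$-sortable (which is fine, granting the standard fact that a prefix of a $c$-sorting word is again a $c$-sorting word) and then extend to prefixes up to commutations ``since $c$-sortability depends only on the element.'' That inference does not go through: the elements represented by prefixes-up-to-commutations of $\ww$ form a strictly larger set than those represented by literal prefixes, so knowing that $c$-sortability is element-intrinsic buys you nothing here. Concretely, with $W=S_4$ and $c=s_2s_1s_3$ one has $\ww = s_2s_1s_3s_2s_1s_3$; the element $s_2s_3$ is a prefix up to commutations (via the rearrangement $s_2s_3s_1s_2s_1s_3$) but is not the element of any literal prefix of $\ww$, since the literal length-two prefix represents $s_2s_1 \neq s_2s_3$. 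The paper avoids this by not trying to establish $c$-sortability of all commutation-prefixes in one step: Proposition~\ref{prop:Prefixes} inducts downward on length, at each stage removing the last letter (possibly after a commutation) and invoking the case analysis of Lemma~\ref{lem:CSing} to verify directly that the shortened element is still a $c$-singleton --- which delivers $c$-sortability and $c$-antisortability simultaneously rather than in two separate stages. Your plan is repairable along exactly these lines, but as written the first paragraph of your $(\Leftarrow)$ argument asserts something it has not shown.
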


\begin{rem}
   \textnormal{For computational purposes, it would be interesting to find a nice combinatorial
               description of $\ww$.}
\end{rem}

\begin{expl}
   \textnormal{Let $W=S_4$ with set of generators $S= \{s_i\,|\,1\leq i\leq 3\}$
   and Coxeter element~$c=s_2s_1s_3$. The $c$-singletons of~$W$ are
   \begin{center}
   \begin{tabular}{lll}
      $e$,            & $s_2s_3$,             & $s_2s_1s_3s_2s_1$,\\
      $s_2$,          & $s_2s_1s_3$,          & $s_2s_1s_3s_2s_3$, and\\
      $s_2s_1,\qquad$ & $s_2s_1s_3s_2,\qquad$ & $w_0=s_2s_1s_3s_2s_1s_3$.
   \end{tabular}
   \end{center}
   We see here that~$s_2s_3$ is not a prefix of~$s_2s_1s_3s_2s_1s_3$, but it does appear as
   a prefix after commutation of the commuting simple reflections~$s_1$ and~$s_3$.}
\end{expl}

\begin{prop}
   The $c$-singletons constitute a distributive sublattice of the (right) weak order on~$W$.
\end{prop}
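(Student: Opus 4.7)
My plan splits the statement into two parts: first, closure under meets and joins in the weak order; second, distributivity of the resulting lattice.

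For the sublattice property, I would apply Proposition~\ref{prop:SingCFibers}(iii) to express the set of $c$-singletons as the intersection of the $c$-sortable and the $c$-antisortable elements. The $c$-sortable elements form a sublattice of the weak order by Reading's theorem cited just above. To handle the $c$-antisortable elements, I would observe that the map $\phi: w \mapsto ww_0$ is an antiautomorphism of the weak order on $W$ which sends $c^{-1}$-sortable elements bijectively onto $c$-antisortable elements; an antiautomorphism carries sublattices to sublattices (interchanging joins and meets), so the $c$-antisortable elements also form a sublattice. The intersection of two sublattices is again a sublattice, yielding the desired sublattice of $c$-singletons.

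To establish distributivity, I would exploit Theorem~\ref{thm:cSingleton}, which identifies the $c$-singletons with the prefixes of $\ww$ up to commutations. Writing $\ww = s_{i_1}s_{i_2}\cdots s_{i_N}$, I would form the associated heap poset $P(\ww)$ on the positions $\{1,\ldots,N\}$ whose order is generated by the relations $j \prec k$ for $j<k$ with $s_{i_j}s_{i_k}\neq s_{i_k}s_{i_j}$. The standard theory of heaps provides a bijection between order ideals of $P(\ww)$ and commutation classes of prefixes of $\ww$, sending an ideal $I$ to the element represented by $\prod_{j\in I} s_{i_j}$ in any linear extension of $P(\ww)$. Composing with Theorem~\ref{thm:cSingleton} gives a bijection between order ideals of $P(\ww)$ and the $c$-singletons. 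I would then verify that this bijection is an isomorphism of posets, sending the weak order on $c$-singletons to the inclusion order on ideals of $P(\ww)$. Birkhoff's representation theorem then furnishes distributivity, since the ideal lattice of any finite poset is distributive.

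The hard part will be checking that the weak order on $c$-singletons matches inclusion of heap ideals. One direction is immediate: if $I_u \subseteq I_v$, a reduced expression for $u$ sits as a prefix of a reduced expression for $v$ (using a common linear extension of $P(\ww)$), so $u \leq v$. The converse direction requires that $u \leq v$ with $u,v$ both $c$-singletons forces $I_u \subseteq I_v$; I would argue this by choosing a reduced expression of $u$ that extends to a reduced expression of $v$, noting that both extend further to commutation-equivalent prefixes of $\ww$ by Theorem~\ref{thm:cSingleton}, so that their position sets in $\{1,\ldots,N\}$ are nested as heap ideals. Once the order isomorphism is in place, the sublattice property established in the first step ensures that the meets and joins of $c$-singletons computed inside the heap-ideal lattice agree with those of the ambient weak order on $W$, completing the proof.
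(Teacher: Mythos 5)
Your first half is correct and takes a route the paper does not: the paper never uses the decomposition ``$c$-singleton $=$ $c$-sortable and $c$-antisortable'' here, but obtains the sublattice structure as a by-product of exhibiting an injective lattice homomorphism from a distributive lattice of position sets. Your argument --- the $c$-sortables form a sublattice by Reading's result, $w\mapsto ww_0$ is an antiautomorphism of the right weak order carrying the $c^{-1}$-sortables onto the $c$-antisortables, and an intersection of sublattices is a sublattice --- is sound, as is the forward direction of your heap comparison ($I_u\subseteq I_v$ implies $u\leq v$). Your lattice $L$ of heap ideals is exactly the paper's lattice of position sets movable to a prefix, so the frameworks agree.

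The gap lies in the two points you pass over quickly: injectivity of the map from heap ideals to elements of $W$, and the converse implication $u\leq v \Rightarrow I_u\subseteq I_v$. First, ``$I_u$'' is only well defined once you know that two distinct ideals cannot represent the same group element; Theorem~\ref{thm:cSingleton} gives surjectivity onto the $c$-singletons but says nothing about injectivity, and the paper devotes a separate argument to it (using that $\ww$ is reduced, so an occurrence of a letter cannot be commuted past another occurrence of the same letter). Second, your sketch of the converse does not work as stated: Theorem~\ref{thm:cSingleton} guarantees that \emph{some} reduced word of $u$ (respectively of $v$) arises as an ideal of the heap of $\ww$, but the particular reduced word of $u$ you chose because it extends to a reduced word of $v$ may lie in a different commutation class (reduced words of the same element can differ by genuine braid moves), so these words need not have ``position sets in $\{1,\dots,N\}$'' at all, and nestedness of ideals does not follow. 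A clean repair: label the heap element at position $j$ of $\ww=s_{i_1}\cdots s_{i_N}$ by the reflection $t_j=s_{i_1}\cdots s_{i_{j-1}}s_{i_j}s_{i_{j-1}}\cdots s_{i_1}$; this label is invariant under commutations, the $t_j$ are pairwise distinct because $\ww$ is reduced, and the left inversion set of $w_I$ is exactly $\{t_j \,|\, j\in I\}$. Since the right weak order is containment of left inversion sets, this yields injectivity and both directions of the order isomorphism at once. Alternatively, follow the paper and check directly that $P\mapsto w_P$ sends unions and intersections of ideals to joins and meets in the weak order.
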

\noindent
Examples of these distributive lattices for~$W=S_4$ are given in Figure~\ref{fig:dist_lattices}.

\begin{figure}[b]
  \begin{center}
  \begin{minipage}{0.95\linewidth}
     \begin{center}
     \begin{overpic}
        [width=10cm]{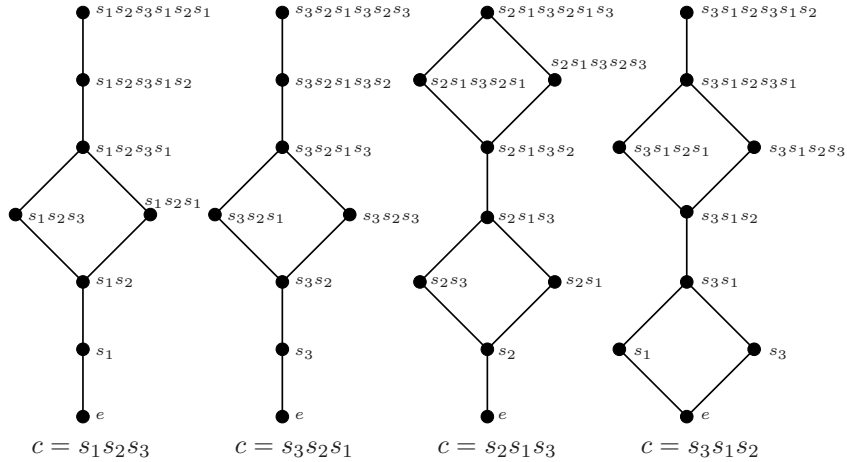}
        \put(11.5,0.5){\tiny $e$}
        \put(11.5,9){\tiny $s_1$}
        \put(11.5,18.5){\tiny $s_1s_2$}
        \put(2.5,27){\tiny $s_1s_2s_3$}
        \put(18,29){\tiny $s_1s_2s_1$}
        \put(11.5,36){\tiny $s_1s_2s_3s_1$}
        \put(11.5,45){\tiny $s_1s_2s_3s_1s_2$}
        \put(11.5,54.5){\tiny $s_1s_2s_3s_1s_2s_1$}
        \put(3,-4){$c=s_1s_2s_3$}
        \put(38,0.5){\tiny $e$}
        \put(38,9){\tiny $s_3$}
        \put(38,18.5){\tiny $s_3s_2$}
        \put(29,27){\tiny $s_3s_2s_1$}
        \put(47,27){\tiny $s_3s_2s_3$}
        \put(38,36){\tiny $s_3s_2s_1s_3$}
        \put(38,45){\tiny $s_3s_2s_1s_3s_2$}
        \put(38,54.5){\tiny $s_3s_2s_1s_3s_2s_3$}
        \put(30,-4){$c=s_3s_2s_1$}
        \put(65,0.5){\tiny $e$}
        \put(65,9){\tiny $s_2$}
        \put(56,18.5){\tiny $s_2s_3$}
        \put(74,18.5){\tiny $s_2s_1$}
        \put(65,27){\tiny $s_2s_1s_3$}
        \put(65,36){\tiny $s_2s_1s_3s_2$}
        \put(56,45){\tiny $s_2s_1s_3s_2s_1$}
        \put(72,47.5){\tiny $s_2s_1s_3s_2s_3$}
        \put(65,54.5){\tiny $s_2s_1s_3s_2s_1s_3$}
        \put(57,-4){$c=s_2s_1s_3$}
        \put(92,0.5){\tiny $e$}
        \put(83,9){\tiny $s_1$}
        \put(101,9){\tiny $s_3$}
        \put(92,18.5){\tiny $s_3s_1$}
        \put(92,27){\tiny $s_3s_1s_2$}
        \put(83,36){\tiny $s_3s_1s_2s_1$}
        \put(101,36){\tiny $s_3s_1s_2s_3$}
        \put(92,45){\tiny $s_3s_1s_2s_3s_1$}
        \put(92,54.5){\tiny $s_3s_1s_2s_3s_1s_2$}
        \put(84,-4){$c=s_3s_1s_2$}
     \end{overpic}
     \end{center}$ $
     \caption[]{There are four Coxeter elements in~$S_4$. Each yields a distributive lattice of $c$-singletons.}
     \label{fig:dist_lattices}
  \end{minipage}
  \end{center}
\end{figure}

\begin{proof}
   Let~$L$ be the set of subsets $P\subset \{1,\dots,\ell(w_0)\}$ with the property
   that the reflections at positions $i \in P$ of~$\ww$ can be moved by commutations
   to form a prefix~${\bf w}_P$ of~$\ww$. This prefix~${\bf w}_P$ represents~$w_P \in W$.
   Note that $\ell(w_P)=|P|$ because ${\bf w}_P$ is a prefix up to commutation of a
   reduced word for~$\ww$. The set~$L$ is partially ordered by  inclusion and forms
   a distributive lattice with $P_1\vee P_2 = P_1\cup P_2$ and $P_1\wedge P_2 = P_1\cap P_2$
   according to~\cite[Exercise 3.48]{stanley}. (In particular, $P_1\cup P_2,\ P_1\cap P_2\in L$
   if $P_1, P_2\in L$).

   We claim that $P \longmapsto w_P$ is an injective lattice homomorphism.

   First we check injectivity. Suppose $w_P=w_Q$ for $P\ne Q$. Since~${\bf w}_P$
   and~${\bf w}_Q$ are reduced expressions, we have $|P|=|Q|=r$. Let $P=\{p_1,\dots,p_r\}$
   and $Q=\{q_1,\dots,q_r\}$ with $p_i<p_{i+1}$ and $q_i < q_{i+1}$. Without loss 
   of generality, let the smallest element in $(P\cup Q)\setminus (P\cap Q)$ be~$p_i$.
   Let~$s\in S$ be the reflection at position~$p_i$ of~$\ww$, then~$s$ also appears 
   at some~$q_j$ with~$q_j>p_i$. When moving the reflections in $Q$ to the front 
   of~$\ww$, the $s$ that started at $q_j$ must pass the $s$ at $p_i$, but this implies 
   that the expression for $\ww$ would not be reduced at this step, which is contrary 
   to our assumption.  Thus the map is injective.

   We show that $P \longmapsto w_P$ respects the lattice structures of~$L$ and~$W$.
   Let $P,Q\in L$ and $R=P\cap Q$. Since~$R\in L$,~${\bf w}_R$ is a prefix of~$\ww$
   up to commutation. In particular, it is also a prefix of~${\bf w}_P$ and~${\bf w}_Q$
   up to commutation. Hence $w_R\leq w_P$ and $w_R \leq w_Q$. We obtain~${\bf w}_{P\setminus R}$
   and~${\bf w}_{Q\setminus R}$ from~${\bf w}_P$ and~${\bf w}_Q$ by deletion of all
   reflections that correspond to an element of~$R$ and
   conclude~$w_P={w}_{R}{w}_{P\setminus R}$ and~$w_Q={w}_{R}{w}_{Q\setminus R}$.
   We have $S({\bf w}_{P\setminus R}) \cap S({\bf w}_{Q\setminus R}) = \varnothing$
   since~$\ww$ is reduced. The proof is by contradiction and is similar to the proof of
   injectivity. Therefore, no element of~$W$ is above~${\bf w}_R$ and below~${\bf w}_P$
   and~${\bf w}_Q$. We have shown $w_R = w_P \wedge w_Q$ with respect to the weak order
   of~$W$. A similar argument proves~$w_T=w_P\vee w_Q$ with respect to the weak order
   of~$W$ where~$T = P \cup Q$:
   $S({\bf w}_{T\setminus P}) \cap S({\bf w}_{T\setminus Q}) = \varnothing$
   implies that no~$w\in W$ below~$w_R$ and above~$w_P$ and~$w_Q$ exists.
\end{proof}

The following lemma characterizes the elements that cover a $c$-singleton.

\begin{lem}\label{lem:CSing}
   Let~$c_{(K_1)}\ldots c_{(K_p)}$ be the $c$-factorization of the $c$-singleton~$w$
   and $s\notin D(w)$. The $c$-factorization of the cover~$ws$ of $w$ is either
   $c_{(K_1)}\ldots c_{(K_p)}c_{(s)}$ or
   $c_{(K_1)}\ldots c_{(K_i\cup\{s\})}\ldots c_{(K_p)}$.

   If $ws=c_{(K_1)}\ldots c_{(K_i\cup\{s\})}\ldots c_{(K_p)}$ then~$i$ is uniquely
   determined and~$s$ commutes with every~$r\in K_{i+1}\cup L$ where~$L$ satisfies
   $c_{(K_i\cup\{s\})}=c_{(K_i\setminus L)}\,s\,c_{(L)}$.
\end{lem}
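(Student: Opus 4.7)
The plan is to identify the $c$-factorization of $ws$ as a local modification of that of $w$ by means of a recursive analysis on blocks, and then to read off the commutation relations from the resulting identity. Since $w$ is a $c$-singleton and $s \notin D(w)$, Proposition~\ref{prop:SingCFibers} guarantees that $ws$ is $c$-sortable; denote its $c$-factorization by $c_{(K'_1)} \cdots c_{(K'_q)}$.

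To locate the block in which $s$ is inserted, I use that for any $c$-sortable element $v$ the nesting $K_1 \supseteq K_2 \supseteq \cdots$ forces the first block to equal the support $S(v)$. Since $\ell(ws) = \ell(w) + 1$ implies $S(ws) = S(w) \cup \{s\} = K_1 \cup \{s\}$, we conclude $K'_1 = K_1 \cup \{s\}$ whenever $s \notin K_1$ (placing us in case (b) with $i = 1$), and $K'_1 = K_1$ otherwise. In the latter case $c_{(K_1)}^{-1} w$ is a $c'$-singleton in the parabolic $W_{K_1}$ for a suitably rotated Coxeter element $c'$ (one verifies that the three equivalent conditions of Proposition~\ref{prop:SingCFibers} descend through this reduction), and its $s$-cover is $c_{(K_1)}^{-1} ws$. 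Iterating this argument produces a unique index $i = \min\{j : s \notin K_j\}$, with $i = p+1$ if $s \in K_p$, such that $K'_j = K_j$ for $j \neq i$ and either $K'_i = K_i \cup \{s\}$ when $i \leq p$ (case (b)) or $K'_{p+1} = \{s\}$ when $i = p+1$ (case (a)). Sortability of the resulting sequence is immediate from the definition of $i$, and uniqueness of $i$ follows from $K'_{i-1} \supseteq K'_i$.

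For the commutation statement in case (b), equate the two reduced expressions
\[
   c_{(K_1)} \cdots c_{(K_{i-1})}\, c_{(K_i \cup \{s\})}\, c_{(K_{i+1})} \cdots c_{(K_p)} \;=\; c_{(K_1)} \cdots c_{(K_p)} \cdot s,
\]
cancel the common prefix $c_{(K_1)} \cdots c_{(K_{i-1})}$, and substitute $c_{(K_i \cup \{s\})} = c_{(K_i \setminus L)}\, s\, c_{(L)}$ and $c_{(K_i)} = c_{(K_i \setminus L)}\, c_{(L)}$ to reduce to $s\, u = u\, s$, where $u := c_{(L)}\, c_{(K_{i+1})} \cdots c_{(K_p)}$. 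This product is reduced, being a suffix of $w$'s $c$-sorting word, so $u$ lies in the parabolic $W_{K_{i+1} \cup L}$ with $S(u) = K_{i+1} \cup L$ and $s \notin K_{i+1} \cup L$. In the geometric representation, $su = us$ is equivalent to $u$ fixing the simple root $\alpha_s$, and a standard argument (using that $W_K$ acts trivially on the orthogonal complement of the span of $\{\alpha_r : r \in K\}$) shows that $u \in W_K$ fixing $\alpha_s$ must lie in $W_{\{r \in K : rs = sr\}}$. Consequently every $r \in K_{i+1} \cup L$ commutes with $s$, as required.

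The hardest step is the recursion in the second paragraph---justifying that the $c$-singleton property of $w$ in $W$ translates to a $c'$-singleton property of $c_{(K_1)}^{-1} w$ in $W_{K_1}$: one must check that right descent sets restrict compatibly and that $c$-sortability of every cover of $w$ in $W$ yields $c'$-sortability of every cover of $c_{(K_1)}^{-1} w$ in $W_{K_1}$.
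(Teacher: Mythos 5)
Your second paragraph has genuine gaps that leave the block identification unestablished, which in turn undermines the third paragraph. The recursion (stripping $c_{(K_1)}$, $c_{(K_2)}, \ldots$ while $s$ lies in the current first block) only pins down $K'_j = K_j$ for $j < i$ and $K'_i = K_i \cup \{s\}$; it gives no information about $K'_j$ for $j > i$ or that $q=p$. You assert ``$K'_j = K_j$ for $j\neq i$'' without argument, yet equating the two reduced words for $ws$ (exactly as you do in the third paragraph) shows that the missing equalities are \emph{equivalent} to the commutation $su = us$ --- so the block identification for $j>i$ is as hard as the lemma itself and cannot be inherited for free. Separately, the recursion rests on the unproven claim that $c_{(K_1)}^{-1}w$ is a $c_{(K_1)}$-singleton in $W_{K_1}$ (the Coxeter element here is the restriction $c_{(K_1)}$, not a ``rotation''); you flag this yourself as the hardest step but do not close it. It is not a routine verification: with $v := c_{(K_1)}^{-1}w$, Proposition~\ref{prop:SingCFibers} demands that $vt$ be $c_{(K_1)}$-sortable for every $t\in K_1\setminus D(v)$, and such $t$ may well lie in $D(w)$ (e.g.\ $W=S_4$, $c=s_1s_2s_3$, $w=s_1s_2s_3s_1$, $v=s_1$, $t=s_3$), in which case $wt$ is not a cover of $w$ and the $c$-singleton hypothesis on $w$ gives no direct handle.

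The paper avoids both problems by a different route: it applies the exchange condition to the reduced word $c_{(L_1)}\cdots c_{(L_q)}$ for $ws$, locating the unique block $L_i$ and letter $r$ whose deletion recovers $w$, and then rules out $r\in K_1$ (Case~1.2) and reduces $i>1$ to $i=1$ (Case~2) by stripping blocks --- an iteration superficially resembling yours, but iterating the exchange-condition analysis rather than a parabolic-singleton claim, so no auxiliary singleton statement is needed. Once the exchange is at block one with $r=s\notin S(w)$, the paper derives the commutation from Tits' word property. Your third paragraph's geometric argument (that $su=us$ forces $u$ to fix $\alpha_s$, and a word in $W_K$ fixing $\alpha_s$ lies in $W_{\{r\in K: rs=sr\}}$) is correct and is a reasonable alternative to the Tits' argument, but it is only a deduction from the block identification, which remains unproved. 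To repair the proof you would need either to prove the parabolic-singleton descent and then supply a genuine argument for the tail blocks, or (more in the spirit of the paper) start from the exchange condition.
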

\begin{proof}
   If $s\in K_p$ then $c_{(K_1)}\ldots c_{(K_p)}c_{(s)}$ is obviously the
   $c$-factorization for~$ws$. So we assume $s\notin K_p$. As~$w$ is a
   $c$-singleton, $ws$ is $c$-sortable with $c$-factorization $c_{(L_1)}\ldots c_{(L_q)}$
   where $L_1 \supseteq \ldots \supseteq L_q$. As $s\in D(ws)$, there is a
   unique $1\leq i\leq q$ and~$r\in L_i$
   such that $w=(ws)s=c_{(L_1)}\dots c_{(L_i\setminus\{r\})}\dots c_{(L_q)}$
   by the exchange condition.

   \medskip
   \noindent
   \textbf{Case 1:} Suppose $i=1$, i.e. $i=1$ is the unique index such that
   \begin{equation}\label{equ:Proof}
      w=(ws)s=c_{(L_1\setminus\{r\})} c_{(L_2)}\ldots c_{(L_q)}.
   \end{equation}
   \textbf{Case 1.1:} $r\notin K_1$.
      Then $r\notin S(w)$ and $s=r$ because $r\in S(ws)=S(w) \cup \{s\}$. Since any
      two reduced expressions of~$ws$ are linked by braid relations according to Tits'
      Theorem,~\cite[Theorem 3.3.1]{bjorner-brenti} and since $s\notin S(w)$, we conclude
      that we have to move~$s$ from the rightmost position to the left by commutation.
      In other words,~$s$ commutes with $K_2\cup L$.\\
   \textbf{Case 1.2:} $r\in K_1=S(w)$.
      As $c_{(L_1\setminus\{r\})} c_{(L_2)}\ldots c_{(L_q)}$ is reduced and
      $L_2 \supseteq \dots \supseteq L_q$ is nested, we have $r\in L_2$. Hence
      \[ K_1=S(w)=(L_1\setminus\{r\})\cup L_2=L_1\cup L_2=L_1.\]
      Thus $c_{(L_2)}\dots  c_{(L_q)}$ and $c_{(K_2)}\dots  c_{(K_p)} s$ are reduced
      expressions for some $\widehat w \in W$ and $s\in D(\widehat w)$. The exchange
      condition implies the existence of a unique index $2\leq j\leq q$
      and $t\in L_j$ such that
      \[ \widehat ws=c_{(L_2)}\ldots c_{(L_j\setminus\{t\})}\ldots c_{(L_q)}.\]
      In other words
      \[ w=c_{(L_1)}\hat w= c_{(L_1)}c_{(L_2)}\ldots c_{(L_j\setminus\{t\})}\ldots c_{(L_q)}\]
      is reduced. But this contradicts the uniqueness of
      $i=1$ in Equation~(\ref{equ:Proof}). So $r \notin K_{1}$ and we finished the first case.

   \medskip
   \noindent
   \textbf{Case 2:} Suppose $i>1$, then  $K_1=S(w)=L_1$.
      Set $\nu:=\min(p,i-1)$ and iterate the argument for $c_{(L_1)}^{-1}w$,
      $c_{(L_2)}^{-1}c_{(L_1)}^{-1}w,\ldots$ to conclude $L_j=K_j$
      for $1\leq j \leq \nu$. If $\nu=p$ then $i=q=p+1$ and $L_i\setminus\{r\}=\varnothing$.
      So $L_{i}=\{s\}\subseteq L_{i-1}=K_p$ which contradicts $s\notin K_p$. Thus $\nu =i-1$
      for some $i\leq p$ and $L_j=K_j$ for $1 \leq j \leq i-1$. We may assume $i=1$ and are
      done by Case 1.
\end{proof}

\begin{prop}\label{prop:Prefixes}
   Let~$w$ be a $c$-singleton and~$\bf w$ its $c$-sorting word. Any prefix of~$\bf w$
   up to commutations is a $c$-singleton.
\end{prop}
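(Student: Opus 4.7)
I will prove the proposition via the characterization from Proposition \ref{prop:SingCFibers}: $u$ is a $c$-singleton if and only if $u$ is $c$-sortable and, in addition, every weak-order cover $ut$ with $t \notin D(u)$ is also $c$-sortable.

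The key combinatorial observation concerns prefixes of $\mathbf{w} = c_{(K_1)} c_{(K_2)} \cdots c_{(K_p)}$ (with $K_1 \supseteq K_2 \supseteq \cdots \supseteq K_p$, since $w$ is $c$-sortable) up to commutations. Such a prefix is specified by a set $P$ of positions in $\mathbf{w}$ whose letters can be moved to the front via commutations; equivalently, every letter outside $P$ that precedes a $P$-letter in $\mathbf{w}$ must commute with that $P$-letter. Let $R_i \subseteq K_i$ denote the set of simple reflections appearing at positions $P \cap (\text{block } i)$. I claim that $R_1 \supseteq R_2 \supseteq \cdots \supseteq R_p$: if $s \in R_{i+1}$ then $s \in K_{i+1} \subseteq K_i$, so a copy of $s$ also appears in block $i$ of $\mathbf{w}$; this copy must itself lie in $P$, for otherwise the selected copy of $s$ in block $i+1$ would have to commute past an $s$ in block $i$, which is impossible. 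Since the commutation process that brings $P$-letters to the front uses only swaps between $P$-letters and non-$P$-letters, the relative order of $P$-letters is preserved, so $c_{(R_1)} c_{(R_2)} \cdots c_{(R_p)}$ is a reduced expression for $u$ with nested block structure. Hence $u$ is $c$-sortable.

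To lift $c$-sortability to the $c$-singleton property, I must show that $ut$ is $c$-sortable for every $t \notin D(u)$. The straightforward subcase is when $ut$ itself arises as a prefix of $\mathbf{w}$ up to commutations of length $\ell(u)+1$: the preceding argument, applied with $ut$ in place of $u$, produces a nested reduced expression for $ut$. The main obstacle is the complementary subcase, where no such $P$-extension for $t$ inside $\mathbf{w}$ exists. Here I would exploit that $w$ is itself a $c$-singleton: by Lemma \ref{lem:CSing}, each cover $ws'$ of $w$ has a controlled $c$-factorization, with $s'$ either appended as a new final block $\{s'\} \subseteq K_p$ or inserted into some block $K_j$ after commuting with $K_{j+1}$ and a suitable portion of $K_j$. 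This structural information descends to $u$ via its nested block structure $R_1 \supseteq \cdots \supseteq R_p$ inside $K_1 \supseteq \cdots \supseteq K_p$ and yields, for the cover $ut$, one of the two nested forms $c_{(R_1)} \cdots c_{(R_p)} c_{(\{t\})}$ (with $\{t\} \subseteq R_p$) or $c_{(R_1)} \cdots c_{(R_i \cup \{t\})} \cdots c_{(R_p)}$ (with $R_i \cup \{t\} \subseteq R_{i-1}$, where the commutation conditions from Lemma \ref{lem:CSing} make the insertion legitimate). Either way, $ut$ admits a nested reduced subword of $c^\infty$ and is therefore $c$-sortable, completing the proof.
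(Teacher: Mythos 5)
Your first step is correct and appealingly direct: for a prefix $u$ of $\ww$ up to commutations, the nesting $R_1\supseteq\cdots\supseteq R_p$ of the selected letters inside $K_1\supseteq\cdots\supseteq K_p$ does follow from the observation that a selected $s$ in block $i+1$ cannot commute past an unselected $s$ in block $i$, and this produces a nested reduced subword of $c^\infty$ for $u$, hence $c$-sortability. The paper does not isolate this statement — it works one letter at a time and gets sortability as a byproduct — so this part of your argument is a genuinely different and slightly cleaner route to the sortability conclusion.

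The gap is in your second step. Lemma~\ref{lem:CSing} describes the $c$-factorization of covers $ws'$ of the fixed $c$-singleton $w$; it says nothing about covers $ut$ of a strictly shorter prefix $u$, and the assertion that "this structural information descends to $u$" is precisely what needs to be proved, not a consequence of anything established so far. For a concrete failure of your dichotomy: take $W=S_4$, $c=s_1s_2s_3$, $w=w_0$ with $\ww=s_1s_2s_3s_1s_2s_1$, $u=s_1$, $t=s_3$. Then $ut=s_1s_3$ is not a prefix of $\ww$ up to commutations (the $s_3$ at position $3$ cannot pass the $s_2$ at position $2$, and the only other copies of $s_1$ cannot pass the $s_1$ at position $1$), so you are in your hard subcase; yet $w_0$ has no covers at all, so Lemma~\ref{lem:CSing} is vacuous and supplies nothing. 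The paper avoids this by reducing to the case $\ell(u)=\ell(w)-1$, where the dropped letter $r$ commutes to the end so that $w=ur$: then a cover $us$ of $u$ either equals $w$ itself (when $s\in D(w)$, via the Bruhat lifting property) or satisfies $us=wrs$ with $ws$ a cover of $w$, so Lemma~\ref{lem:CSing} genuinely applies, and a further split on whether $s$ and $r$ commute completes the argument; the general statement then follows by peeling off one letter at a time. To rescue your global approach you would need an analogue of Lemma~\ref{lem:CSing} valid for an arbitrary $c$-sortable $u$ with $R_i\subseteq K_i$, which is essentially the entire difficulty and is not filled in by the proposal as written.
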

\begin{proof}
   Let $c_{(K_1)}\ldots c_{(K_p)}$ denote the $c$-factorization of~$w$. It is sufficient
   to show that the prefix $w^{\prime}$ up to commutations of length~$\ell(w)-1$ is a
   $c$-singleton. There is $1 \leq i \leq p$ and $r\in K_p$ such that
   $w^{\prime}=c_{(K_1)}\dots c_{(K_i\setminus\{r\})}\dots c_{(K_p)}$ is the
   $c$-factorization of $w^{\prime}$. It remains to show that $w^{\prime}s$ is $c$-sortable
   for $s \notin D(w^{\prime})$.

   \medskip
   \noindent
   \textbf{Case 1:} Suppose $s\in D(w)$. Recall the definition of the
   {\em Bruhat order $\leq_\mathcal{B}$ on~$W$}: $u\leq_\mathcal{B} v$ in~$W$
   if an expression for~$u$ can be obtained as a subword of a reduced expression of~$v$,
   see~\cite[Chapter~2]{bjorner-brenti}. The lifting property of the Bruhat order
   implies $w^{\prime}s\leq_{\mathcal B} w$. Moreover
   $\ell(w^{\prime}s)=\ell(w^{\prime})+1=\ell(w)$. Thus $w=w^{\prime}s$
   and $s=r$. In particular $w^{\prime}s=w$ is $c$-sortable.

   \medskip
   \noindent
   \textbf{Case 2:} Suppose $s\notin D(w)$, in particular $s\not=r$. So
   $ws$ is $c$-sortable and by Lemma~\ref{lem:CSing} there are two cases to
   distinguish. \\
   \textbf{Case 2.1:} If $c_{(K_1)}\ldots c_{(K_p)}c_{(s)}$ is the $c$-factorization
   of~$ws$ then $s\in K_p$ and $c_{(K_1)}\dots c_{(K_i\setminus\{r\})}\dots c_{(K_p)}c_{(s)}$
   is the $c$-factorization of~$w^{\prime}s$. In particular, the sequence
   $K_1\supseteq \ldots \supseteq K_{i}\setminus\{r\} \supseteq \ldots \supseteq K_p \supseteq \{s\}$
   is nested and~$w^{\prime}s$ is $c$-sortable.\\
   \textbf{Case 2.2:} If $c_{(K_1)}\ldots c_{(K_j\cup\{s\})}\ldots c_{(K_p)}$
   is the $c$-factorization of $ws$ then either $s$ and $r$ commute or not.

   If~$s$ and~$r$ do not commute then $j = i$ and~$r$ appears before~$s$ in the
   chosen reduced expression of~$c$, since $s$ commutes with every simple reflections
   to the right of the rightmost copy of~$s$ in the $c$-factorization of~$ws$ by
   Lemma~\ref{lem:CSing}. Then
   $w^{\prime}s=c_{(K_1)}\ldots c_{(K_i\setminus\{r\}\cup\{s\})}\ldots c_{(K_p)}$
   is $c$-sortable.

   If~$s$ and~$r$ commute, suppose first~$j \leq i$. Then
   \[
     w's = wrs = wsr
         = c_{(K_1)}\ldots c_{(K_j\cup\{s\})}\ldots c_{(K_i\setminus\{r\})}\ldots c_{(K_p)}
   \]
   is the $c$-factorization of~$w's$. As
   $K_1 \supseteq \ldots
        \supseteq K_j\cup\{s\}
        \supseteq \ldots
        \supseteq K_i\setminus\{r\}
        \supseteq \ldots
        \supseteq K_p$
   is nested,~$w^{\prime}s$ is $c$-sortable. The case $j > i$ is proved similarly.

   We conclude that~$w^{\prime}s$ is $c$-sortable for any $s\notin D(w')$,
   so~$w'$ is a $c$-singleton.
\end{proof}

\begin{proof}[Proof of Theorem~\ref{thm:cSingleton}]
   We know by Proposition~\ref{prop:SingCFibers} that~$w$ is a $c$-singleton if
   and only if~$w$ is $c$-sortable and~$ww_0$ is $c^{-1}$-sortable.

   Suppose~$w$ is a $c$-singleton. Let $s$ be the rightmost simple reflection appearing
   in the $c^{-1}$-factorization for $ww_0$, so $ww_0=us$ for some $c^{-1}$-sortable
   element~$u$.

   We have~$su^{-1}w=w_0$ and hence~$u^{-1}w=sw_0$. Since~$S=w_0 S w_0$,~$t:=w_0sw_0$
   is a simple reflection. Now $u^{-1}wt=w_0$ implies $\ell(wt)>\ell(w)$ and
   we conclude that~$wt$ is $c$-sortable by Proposition~\ref{prop:SingCFibers}.
   But~$wt$ is also $c$-antisortable since $wtw_0=u$ is $c^{-1}$-sortable. Hence,~$wt$
   is a $c$-singleton that covers~$w$ in the weak order.

   Repeating this process, we show that every $c$-singleton is on an unrefinable chain
   of $c$-singletons leading up to~$w_0$. By downwards induction, every element of that
   chain is a prefix of~$w_0$ up to commutations. This is clearly true for~$w_0$. As we
   went up each step, though, we added a simple reflection which commuted with every
   reflection to its right (or was added at the rightmost end), by Lemma~\ref{lem:CSing}.
   Thus, when we want to remove the element we added at the last step, we can rewrite~$w_0$
   using commutations only such that this simple reflection is on the right.
\end{proof}

\section{Coxeter fans, Permutahedra, and Cambrian fans}\label{se:CambFan}

In this section, we describe the geometry of Coxeter fans and $c$-Cambrian fans.
We first recall some fact about the geometric representation of~$W$ and use the
notation of~\cite{humphreys} on Coxeter groups and root systems. Let~$W$ act by
reflections on an $\mathbb R$-Euclidean space $(V,\scalprod{\cdot}{\cdot})$.

Let~$\Phi$ be a root system corresponding to~$W$ with simple roots~$\Delta=\{\a_s\,|\, s\in S\}$,
positive roots~$\Phi^+=\Phi\cap\mathbb R_{>0}[\Delta]$ and negative roots~$\Phi^-=-\Phi^+$.
Without loss of generality, we assume that the action of~$W$ is essential relative
to~$V$, that is,~$\Delta$ is a basis of~$V$. The set~$\Phi^+$ parametrizes the set
of reflections in~$W$: to each reflection~$t\in W$ there corresponds a unique positive
root~$\a_t\in\Phi^+$ such that~$t$ maps~$\a_t$ to~$-\a_t$ and fixes the
hyperplane~$H_t=\{v\in V\,|\,\scalprod{v}{\a_t}=0\}$.

The {\em Coxeter arrangement}~$\A$ for~$W$ is the collection of all reflecting
hyperplanes for~$W$. The complement~$V\setminus(\bigcup\A)$ of~$\A$ consists of
open cones. Their closures are called {\em chambers}. The chambers are in canonical
bijective correspondence with the elements of~$W$. The {\em fundamental chamber}
$D:=\bigcap_{s \in S} \{ v\in V\,|\,\langle v,\alpha_s \rangle \geq 0 \}$ corresponds
to the identity~$e \in W$ and the chamber~$w(D)$ corresponds to~$w\in W$.

A subset~$U$ of~$V$ is {\em below} a hyperplane $H\in\A$ if every point in~$U$ is
on~$H$ or on the same side of~$H$ as~$D$. The subset~$U$ is {\em strictly below}~$H\in\A$
if~$U$ is below~$H$ and~$U \cap H = \varnothing$. Similarly,~$U$ is {\em above} or
{\em strictly above} a hyperplane~$H\in\A$. The inversions of~$w\in W$ are the
reflections that correspond to the hyperplanes~$H$ which $w(D)$ is above.

For a simple reflection~$s\in S$, we have $\ell(sw)<\ell(w)$ if and only if~$s\le w$ in the
weak order if and only if~$w(D)$ is above~$H_s$. To decide whether~$w(D)$ is above or
below~$H_s$ is therefore a weak order comparison. These notions will be handy
in \S\ref{se:CamPoly}.

\medskip
\noindent A {\em fan}~$\G$ is a family of nonempty closed polyhedral
(convex) cones in~$V$ such that
\begin{compactenum}[(i)]
   \item every face of a cone in~$\G$ is in~$\G,$ and
   \item the intersection of any two cones in $\G$ is a face of both.
\end{compactenum}
A fan~$\G$ is {\em complete} if the union of all its cones is~$V$,
{\em essential} (or {\em pointed}) if the intersection of all
non-empty cones of~$\G$ is the origin, and {\em simplicial} if every
cone is simplicial, that is, spanned by linearly independent
vectors. A $1$-dimensional cone is called a {\em ray} and a ray
is~{\em extremal} if it is a face of some cone. The set of
$k$-dimensional cones of~$\G$ is denoted by $\G^{(k)}$ and two cones
in~$\G^{(k)}$ are {\em adjacent} if they have a common face in
$\G^{(k-1)}$. A fan~$\G$ coarsens a fan~$\G^{\prime}$ if every cone
of~$\G$ is the union of cones of~$\G^{\prime}$ and
$\bigcup_{C\in\G}C = \bigcup_{C\in\G^\prime}C$. We refer
to~\cite[Lecture~7]{Ziegler} for more details and examples.

\medskip 
The chambers and all their faces of a Coxeter arrangement~$\A$ define the
{\em Coxeter fan}~$\F$. The Coxeter fan~$\F$ is known to be complete,
essential, and simplicial,~\cite[Sections~1.12--1.15]{humphreys}. The
fundamental chamber~$D\in \F$ is a (maximal) cone spanned by the (extremal)
rays $\{\rho_s\,|\, s\in S\}$, where~$\rho_s$ is the intersection of~$D$
with the subspace orthogonal to the hyperplane spanned by $\{\a_t\,|\,t\in\la s\ra\}$.

Recall that the rays of~$\F$ decompose into~$n$ orbits under the action of~$W$ and
each orbit contains exactly one~$\rho_s$,~$s\in S$. Thus, any ray~$\rho\in\F^{(1)}$
is~$w(\rho_s)$ for some~$w\in W$ where~$s \in S$ is uniquely determined by~$\rho$
but~$w$ is not unique. In fact, $w(\rho_s)=g(\rho_s)$ if and only if $w\in gW_{\la s\ra}$.

\subsection{Permutahedra}\label{subsection:permutahedra}

We illustrate Coxeter fans by means of permutahedra, that is, polytopes
that have a Coxeter fan as normal fan.

Take a point~$\pmb a$ of the complement~$V\setminus(\bigcup\A)$ of the Coxeter
arrangement~$\A$, and consider its $W$-orbit. The convex hull of this $W$-orbit
is a {\em $W$-permutahedron} denoted by~$\Perm^{\pmb a}(W)$. There is a bijection
between the rays of~$\F$ and the facets of~$\Perm^{\pmb a}(W)$: there is a
halfspace associated to each ray~$\rho \in \F$ such that its supporting hyperplane
is perpendicular to~$\rho$ and such that the permutahedron is the intersection of
these halfspaces. Let us be more precise.

Let $\Delta^*:=\{v_s \in V\,|\, s\in S\}$ be the fundamental weights
of~$\Delta$, that is, $\Delta^*$ is the dual basis of~$\Delta$ for
the scalar inner product. The fundamental chamber~$D$ is spanned by
the fundamental weights, that is, $D=\mathbb R_{\geq 0}[\Delta^*]$.
Hence, the rays of~$\A$ are easily expressed in terms of~$\Delta^*$:
We have $\rho_s=\mathbb R_{\geq 0}[v_s]$ and therefore
$w(\rho_s)=\mathbb R_{\geq 0}[w(v_s)]$ for any $w\in W$ and $s\in
S$.

Without loss of generality, we choose~$\pmb a=\sum_{s\in S} a_s v_{s}$ in the
interior of~$D$, that is $a_{s} > 0$ for ~$s\in S$, and define $M(w):=w(\pmb a)$.
All points~$M(w)$ are distinct and the convex hull of $\{M(w)\,|\, w\in W\}$ yields
a realization of the $W$-permutahedron~$\Perm^{\pmb a}(W)$. It is not difficult to
describe this polytope as an intersection of half-spaces.

For each~$\rho=w(\rho_s)\in \F^{(1)}$, we define the closed half space
\[
   \H_{\rho}^{\pmb a}
    := \{ v \in V\,|\,
           \scalprod{v}{w(v_s)}\leq \scalprod{\pmb a}{v_{s}}\}.
\]
This definition does not depend on the choice of~$w\in W$ such that~$\rho=w(\rho_s)$,
but only of the coset $W/W_{\la s\ra}$. The open half space~$\H^{{\pmb a},+}_{\rho}$
and the hyperplane~$H_{\rho}^{\pmb a}$ are defined by strict inequality and equality
respectively. Now, the permutahedron~$\Perm^{\pmb a}(W)$ is given by
\[
  \Perm^{\pmb a}(W) = \bigcap_{\rho\in\F^{(1)}}
                      \H_{\rho}^{\pmb a}.
\]
As for the rays of the Coxeter fan, we have 
$H_{\rho}^{\pmb a}=H_{(w,s)}^{\pmb a}=H_{(w^\prime,s)}^{\pmb a}$ 
if and only if $w\in w^\prime W_{\la s\ra}$ and 
$\rho=w(\rho_s)=w^\prime(\rho_s)$. Moreover, 
$M(w)\in H_{(w^\prime,s)}^{\pmb a}$ if and only if 
$H_{(w^\prime,s)}^{\pmb a}=H_{(w,s)}^{\pmb a}$. A 
simple description of the vertex~$M(w)$ of the 
permutahedron follows:
\[
   M(w) = \bigcap_{s\in S} H_{(w,s)}^{\pmb a} .
\]

\begin{expl}[Realization of~$\Perm(S_3)$]
   \textnormal{We consider the Coxeter group~$W=S_3$ of type~$A_2$ acting
       on~$\mathbb R^2$. The reflections~$s_1$ and~$s_2$ generate~$W$ and
       the simple roots that correspond to~$s_1$ and~$s_2$ are~$\a_1$ and~$\a_2$.
       They are normal to the reflection hyperplanes~$H_{s_1}$ and~$H_{s_2}$.
       The fundamental weight vectors that correspond to the simple roots are the vectors~$v_1$
       and~$v_2$ and determine the ray~$L = \{ \mu (a_1 v_1 + a_2 v_2) \;|\; \mu >0\}$,
       $a_1,a_2 > 0$, which contains~$M(e)=\pmb a \in L$. We obtain the
       permutahedron~$\Perm(S_3)$ as convex hull of the $W$-orbit of~$M(e)$.
       Alternatively, the permutahedron is described as intersection of the
       half spaces~$\H_{(x,s)}^{\pmb a}$ with bounding hyperplanes~$H_{(x,s)}^{\pmb a}$ for~$x\in W$
       and~$s \in S$. All objects are indicated in Figure~\ref{fig:construction_perm}.}
\end{expl}

\begin{figure}
  \begin{center}
  \begin{minipage}{0.95\linewidth}
     \begin{center}
     \begin{overpic}
        [width=140pt]{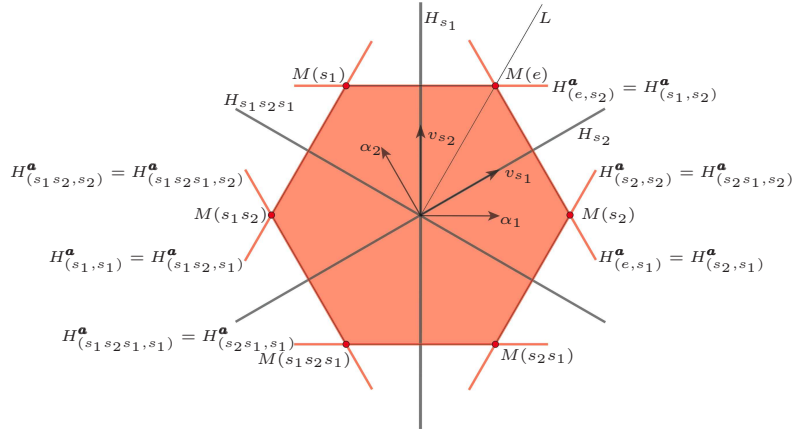}
        \put(71,95){\tiny $L$}
        \put(63,82){\tiny $M(e)$}
        \put(44,95){{\tiny {$H_{s_1}$}}}
        \put(13,82){\tiny $M(s_1)$}
        \put(-10,49){\tiny $M(s_1s_2)$}
        \put(81,49){\tiny $M(s_2)$}
        \put(62,16){\tiny $M(s_2s_1)$}
        \put(5,15){\tiny $M(s_1s_2s_1)$}
        \put(80,68){\tiny $H_{s_2}$}
        \put(-3,76){\tiny $H_{s_1s_2s_1}$}
        \put(29,65){\tiny $\a_2$}
        \put(62,48){\tiny $\a_1$}
        \put(63,59){\tiny $v_{s_1}$}
        \put(45,68){\tiny $v_{s_2}$}
        \put(85,39){\tiny $H_{(e,s_1)}^{\pmb a}=H_{(s_2,s_1)}^{\pmb a}$}
        \put(85,59){\tiny $H_{(s_2,s_2)}^{\pmb a}=H_{(s_2s_1,s_2)}^{\pmb a}$}
        \put(74,79){\tiny $H_{(e,s_2)}^{\pmb a}=H_{(s_1,s_2)}^{\pmb a}$}
        \put(-44,39){\tiny $H_{(s_1,s_1)}^{\pmb a}=H_{(s_1s_2,s_1)}^{\pmb a}$}
        \put(-53,59){\tiny $H_{(s_1s_2,s_2)}^{\pmb a}=H_{(s_1s_2s_1,s_2)}^{\pmb a}$}
        \put(-41,20.5){\tiny $H_{(s_1s_2s_1,s_1)}^{\pmb a}=H_{(s_2s_1,s_1)}^{\pmb a}$}
     \end{overpic}
     \end{center}
     \caption[]{The permutahedron~$\Perm(S_3)$ obtained as convex hull of the $S_3$-orbit
                of~$M(e) \in L$ or as intersection of the half spaces~$\H_{(x,s)}^{\pmb a}$.}
     \label{fig:construction_perm}
  \end{minipage}
  \end{center}
\end{figure}

\subsection{Cambrian fans}
For any lattice congruence~$\Theta$ of the weak order on~$W$, N.~Reading constructs
a complete fan~$\F_\Theta$ that coarsens the Coxeter fan~$\F$,~\cite{reading0}. A
maximal cone~$C_\vartheta\in \F_\Theta$ corresponds to a congruence class~$\vartheta$
of~$\Theta$ and~$C_\vartheta$ is the union of the chambers of~$\A$ that correspond
to the elements of~$\vartheta$. In~\cite[Section~5]{reading0} N.~Reading proves that these
unions are indeed convex cones and that the collection~$\F_\Theta$ of these
cones and their faces is a complete fan.

The {\em $c$-Cambrian fan}~$\F_c$ of~$W$ is obtained by this construction if we
consider the lattice congruence with congruence classes~$[\pi^c_\downarrow(w),\pi^\uparrow_c(w)]$
for~$w\in W$ and chosen Coxeter element~$c$. The $n$-dimensional cone that corresponds to the
$c$-sortable element~$w$ is denoted by~$C(w)$. It is the union of the maximal
cones of~$\F$ that correspond to the elements
of~$(\pi^c_\downarrow)^{-1}\pi^c_\downarrow(w)=[\pi^c_\downarrow(w),\pi^\uparrow_c(w)]$.
In particular,~$C(w)$ is a maximal cone of~$\F_c$ and of~$\F$ if and only if~$w$
is a $c$-singleton.

In~\cite{reading4}, N.~Reading and D.~Speyer define a bijection between the set of rays
of~$\F_c$ and the set of {\em almost positive roots}~$\Phi_\apr:=\Phi^+\cup(-\Delta)$.
To describe this labeling of the rays, we first define a set of almost positive roots for
any $c$-sortable~$w$. For $s\in S(w)$, let $1\leq j_s\leq \ell(w)$ be the unique integer
such that~$s_{j_s}$ is the rightmost occurrence of~$s$ in the $c$-sorting
word~$s_{1}\ldots s_{\ell(w)}$ of~$w$ and define
\[
  \lr_s(w):= \begin{cases}
                s_1 \ldots s_{j_s-1}(\a_s) & \text{ if }s\in S(w)\\
                -\alpha_s                  &\textrm{ if }s\not\in S(w),
             \end{cases}
  \quad\text{and}\quad
  \cl_c(w) := \bigcup_{s\in S} \lr_s(w).
\]

\begin{expl}
   \textnormal{To illustrate these maps, we consider the Coxeter group~$W=S_3$ with
        generators $S=\{s_1,s_2\}$ as shown in Figure~\ref{fig:construction_perm}.
        Choose~$c=s_1s_2$ as Coxeter element. It is easy to check
        that~$w\in W\setminus\{s_2s_1\}$ is $c$-sortable and that
        $w \in W\setminus\{s_2, s_2s_1\}$ is a $c$-singleton. From the above definition
        follows
        \begin{center}
        \begin{tabular}{lll}
           $\lr_{s_1}(e)=\lr_{s_1}(s_2)=-\a_1$,      & & $\lr_{s_2}(e)=\lr_{s_2}(s_1)=-\a_2$,\\
           $\lr_{s_1}(s_1)=\lr_{s_1}(s_1s_2)=\a_1$,  & & $\lr_{s_2}(s_1s_2s_1)
                                                               =\lr_{s_2}(s_1s_2)=\a_1+\a_2$,\\
           $\lr_{s_1}(s_1s_2s_1)=\a_2$,              & & $\lr_{s_2}(s_2)=\a_2$,
        \end{tabular}
        \end{center}
        and therefore
        \begin{center}
        \begin{tabular}{lll}
           $\cl_{c}(e)  =\{ -\a_1,-\a_2\}$,  & & $\cl_{c}(s_1s_2)=\{ \a_1, \a_1+\a_2\}$,\\
           $\cl_{c}(s_1)=\{ \a_1, -\a_2\}$,  & & $\cl_{c}(s_1s_2s_1) =\{ \a_2, \a_1+\a_2\}$,\\
           $\cl_{c}(s_2)=\{ -\a_1, \a_2\}$.  & &
        \end{tabular}
        \end{center}
    }
\end{expl}

N.~Reading and D.~Speyer use the cluster map~$\cl_c$ to prove that $c$-Cambrian fans and
cluster fans have the same combinatorics: the maximal cone~$C(w)$ of the $c$-Cambrian fan
represented by the $c$-sortable element~$w$ is mapped to the set~$\cl_c(w)$ of almost
positive roots. The cardinality of~$\cl_c(w)$ matches the number of extremal rays of~$C(w)$
and~$\cl_c$ induces a bijection~$f_c$ between the rays of~$\F_c$ and the almost positive
roots by extending~$\cl_c$ to intersection of cones:
$\cl_c(C_1\cap C_2):=\cl_c(C_1)\cap\cl_c(C_2)$.
To put it slightly differently, N.~Reading and D.~Speyer showed the following Theorem.

\begin{thm}[Reading-Speyer~{\cite[Theorem 7.1]{reading4}}]\label{thm:ReadSpeyer}
   There is a bijective labeling~$f_c:\F_c^{(1)}\rightarrow \Phi_{\geq -1}$ of the
   rays of the $c$-Cambrian fan~$\F_c$ by almost positive roots such that the extremal
   rays of~$C(w)$ are labeled by~$\cl_c(w)$.
\end{thm}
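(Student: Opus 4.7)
My strategy is to verify three claims in turn by induction on $c$-sortable elements in the weak order: (a) $|\cl_c(w)|=n$ for every $c$-sortable $w$; (b) the labels are consistent on rays shared by adjacent maximal cones of $\F_c$; (c) the resulting map is a bijection onto $\Phi_\apr$.

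For (a), the map $\lr_s$ yields one root per $s\in S$, so $|\cl_c(w)|\le n$. For $s\notin S(w)$ we obtain the negative simple root $-\alpha_s$. For $s\in S(w)$, the vector $s_1\cdots s_{j_s-1}(\alpha_s)$ is a positive root, in fact the positive root whose associated reflection is the inversion of $w$ introduced at the rightmost occurrence of $s$ in the $c$-sorting word. Distinct $s\in S$ therefore yield distinct elements of $-\Delta\cup\Phi^+$, so $|\cl_c(w)|=n$, matching the number of extremal rays of the simplicial cone $C(w)$.

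For (b), initialize with $w=e$: here $\cl_c(e)=-\Delta$ and $C(e)=D$ has extremal rays $\rho_s$, so the labeling $\rho_s\leftrightarrow -\alpha_s$ serves as the base case. Now let $w\lessdot w'=ws$ be a cover among $c$-sortable elements. Then $C(w)$ and $C(w')$ share a facet $F$ and differ in exactly one extremal ray. A case analysis mirroring Lemma~\ref{lem:CSing} — either $s$ is appended to the $c$-sorting word as a new block $c_{(s)}$, or inserted into an existing block to form $c_{(K_i\cup\{s\})}$ — shows that $\cl_c(w')$ is obtained from $\cl_c(w)$ by replacing a single entry, and that the removed (resp. added) entry corresponds precisely to the extremal ray of $C(w)$ (resp. $C(w')$) not lying in $F$. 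Connectedness of the $c$-Cambrian lattice under covers then extends the labeling consistently across all of $\F_c^{(1)}$.

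For (c), injectivity follows from (b): along a facet-adjacent path between two cones containing rays with the same label, the deterministic propagation of labels forces a contradiction. For surjectivity, we count: $|\Phi_\apr|=n+|\Phi^+|$, and the number of rays of $\F_c$ can be obtained recursively by observing that the link of $\rho_s$ in $\F_c$ is naturally isomorphic to the Cambrian fan of the parabolic subgroup $W_{\la s\ra}$ with induced Coxeter element $c_{(\la s\ra)}$; induction on $n$ together with standard root-system combinatorics then matches the two counts. The main obstacle is step~(b), specifically the case where $s$ commutes into an interior block: here the prefix $s_1\cdots s_{j_t-1}$ may shift for several $t\in S$ simultaneously, and tracking which positive roots persist in $\cl_c$ and which are exchanged requires delicate bookkeeping — this is precisely the point at which Reading and Speyer's cluster-map framework provides the cleanest organizing principle.
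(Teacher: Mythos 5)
The paper does not prove this statement; it cites it as Reading--Speyer's Theorem~7.1, and the paragraph preceding the theorem is a summary of their result, not an argument. So any complete proof you supply is new relative to the paper. With that understood, your sketch does not close, and you say as much yourself.

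Step~(a) is fine: for fixed $c$-sortable $w$, the map $s\mapsto\lr_s(w)$ is injective because distinct $s\in S(w)$ give distinct inversions of $w$ (the prefixes $s_1\cdots s_{j_s}$ are reduced), and $s\notin S(w)$ gives distinct negative simple roots; hence $|\cl_c(w)|=n$. But this only produces $n$-element subsets of $\Phi_{\geq-1}$ per cone; the theorem is about \emph{consistency} across cones. That is exactly your step~(b), which you do not carry out: when $w'=ws$ absorbs $s$ into an interior block $K_i$, the prefixes $s_1\cdots s_{j_t-1}$ shift for every $t$ whose last occurrence lies after position $j_s$, and it is genuinely not obvious that $\cl_c(w)$ and $\cl_c(w')$ differ in a single root, let alone that the exchanged roots sit on the exchanged rays. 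Saying that "Reading and Speyer's cluster-map framework provides the cleanest organizing principle" acknowledges the gap rather than filling it — that framework \emph{is} the missing proof. Reading and Speyer do not do this by bookkeeping on $c$-sorting words; they recurse on an initial simple reflection $s$ of $c$, relating $\F_c$ to $\F_{scs}$ and to $\F_{sc}$ inside $W_{\la s\ra}$, which is the same machinery the present paper later imports as Lemma~\ref{lem:w_c_sortable}, Lemma~\ref{lem:six}, and Lemma~\ref{lem:cones}.

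Two further problems. First, cover-consistency alone does not make the ray labeling well defined: a ray $\rho\in\F_c^{(1)}$ lies in many maximal cones not related by covers, and one must show the assigned label is independent of the cone and of the path of covers used to reach it; your injectivity argument in~(c) presupposes exactly this. Second, the surjectivity count uses that the link of $\rho_s$ in $\F_c$ is the Cambrian fan of $W_{\la s\ra}$ with Coxeter element $sc$; this is true (essentially Lemma~\ref{lem:cones}) but is itself a nontrivial consequence of the same Reading--Speyer recursion, so invoking it without developing that recursion is circular. In short, the strategy resembles Reading--Speyer's in outline, but the load-bearing lemma is replaced by an admission that it is hard.
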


We now aim for an explicit description of~$f_c$ that relates nicely to $c$-singletons,
but first need the following two lemmas.

\begin{lem}\label{lem:LemA}
   For~$\beta\in\Phi_{\apr}$, there exists a $c$-sortable element~$w$ and a simple
   reflection~$s$ such that~$\lr_s(w)=\beta$.
\end{lem}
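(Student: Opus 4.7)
The plan is to treat the two types of almost positive roots separately. If $\beta = -\alpha_s \in -\Delta$, I would take $w = e$: since $S(e) = \varnothing$, the simple reflection $s$ does not belong to $S(w)$, and the second case of the definition of $\lr_s$ yields $\lr_s(e) = -\alpha_s = \beta$ immediately.

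For $\beta \in \Phi^+$, I would exploit the $c$-sorting word $\ww = u_1 u_2 \cdots u_N$ of $w_0$, where $N = \ell(w_0)$. Since $\ww$ is a reduced expression for the longest element, the associated inversion sequence
\[
  \beta_k := u_1 u_2 \cdots u_{k-1}(\alpha_{u_k}), \qquad 1 \leq k \leq N,
\]
enumerates $\Phi^+$ bijectively. Let $k$ be the unique index with $\beta = \beta_k$, and set $s := u_k$ and $w := u_1 u_2 \cdots u_k$. The argument then reduces to verifying that $w$ is $c$-sortable, that $u_1 \cdots u_k$ is its $c$-sorting word, and that position $k$ is the rightmost occurrence of $s$ in that word; for then $j_s = k$ and the definition gives $\lr_s(w) = u_1 \cdots u_{k-1}(\alpha_s) = \beta_k = \beta$.

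To establish the $c$-sortability of $w$ together with the identification of its $c$-sorting word, I would write $\ww = c_{(K_1)} c_{(K_2)} \cdots c_{(K_p)}$ as its $c$-factorization, with $K_1 \supseteq K_2 \supseteq \cdots \supseteq K_p$, and observe that the prefix $u_1 \cdots u_k$ has the form $c_{(K_1)} \cdots c_{(K_{i-1})} c_{(L)}$, where $L \subseteq K_i$ is the initial segment of the letters of $c_{(K_i)}$ given by the truncation. The chain $K_1 \supseteq \cdots \supseteq K_{i-1} \supseteq L$ is nested, exhibiting a reduced expression for $w$ whose block decomposition along consecutive copies of $c$ is nested. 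Since such a reduced expression is characterized as the $c$-sorting word of the element it represents, $w$ is $c$-sortable and $u_1 \cdots u_k$ is its $c$-sorting word. Finally, $s = u_k$ sits at the last position of this word, so trivially $j_s = k$ and the computation of $\lr_s(w)$ above completes the argument.

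The main obstacle is the step verifying that the prefix $u_1 \cdots u_k$ of $\ww$ is the $c$-sorting word of $w$, and not merely some reduced expression of the right shape. This reflects a basic compatibility of $c$-sorting with prefixes: the greedy selection of the lexicographically first reduced subword of $c^\infty$ commutes with truncation at a position. I would extract this compatibility from the standard characterization of $c$-sortable elements via nested block decompositions established in Reading's foundational papers on Cambrian lattices, rather than redoing it from scratch.
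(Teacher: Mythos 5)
Your proposal takes essentially the same route as the paper: handle $\beta\in-\Delta$ via $w=e$, and for $\beta\in\Phi^+$ read $\beta$ off the inversion sequence of $\ww$ and take $w$ to be the corresponding prefix. The only point of divergence is how you justify that this prefix is $c$-sortable with the prefix as its $c$-sorting word: the paper appeals to its own Proposition~\ref{prop:Prefixes} (any prefix of $\ww$ up to commutations is a $c$-singleton, and the proof of that proposition identifies its $c$-factorization as the truncated one), whereas you argue directly from the nested block structure $K_1\supseteq\cdots\supseteq K_{i-1}\supseteq L$ and defer the fact that a nested reduced subword of $c^\infty$ is automatically the $c$-sorting word to Reading's work. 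That fact is true and does follow from the recursive characterization recorded as Lemma~\ref{lem:w_c_sortable}, but it is not immediate from the definition; if you wanted to keep the argument self-contained, Proposition~\ref{prop:Prefixes} is the in-house substitute the paper uses precisely for this purpose.
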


\begin{proof}
   The identity~$e$ is a $c$-singleton and~$\cl_c(e) = \Phi_{\geq -1}\setminus \Phi^+$,
   so we are done if $\beta$ is a negative simple root. Suppose that~$\beta\in \Phi^+$
   and consider the longest element~$w_0$ with $c$-sorting word
   ${\ww}=s_{j_1}s_{j_2}\dots s_{j_N}$. Since~$w_0(D)$ is above all reflecting hyperplanes,
   \[
     \Phi^+=\{s_{j_1}s_{j_2}\dots s_{j_{p-1}}(\a_{s_{j_p}})\,|\,1\leq p\leq N\}
   \]
   and~$\beta = s_{j_1}\dots s_{j_{i-1}}(\alpha_{s_{j_i}})$ for some~$1\leq i\leq N$.
   Since~$w=s_{j_1}\dots s_{j_i}$ is a prefix of~$\ww$, it is a
   $c$-singleton and $\lr_{s_{j_i}}(w)=\beta$.
\end{proof}

\begin{lem}\label{lem:enoughsing}
   Let $\rho\in\F_c^{(1)}$. There is a $c$-singleton~$w$ such that~$\rho$ is an
   extremal ray of $C(w)$.
\end{lem}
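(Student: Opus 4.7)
The plan is to string together the bijective labeling of Theorem~\ref{thm:ReadSpeyer}, the existence result of Lemma~\ref{lem:LemA}, and the $c$-singleton characterization of Theorem~\ref{thm:cSingleton}. In fact, Lemma~\ref{lem:LemA} as proved above essentially does all the work: one just has to notice that the $c$-sortable witness it produces is already a $c$-singleton.

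More concretely, given $\rho \in \F_c^{(1)}$, I would first invoke Theorem~\ref{thm:ReadSpeyer} to set $\beta := f_c(\rho) \in \Phi_{\geq -1}$. Then I would apply Lemma~\ref{lem:LemA} to obtain a $c$-sortable element $w$ and a simple reflection $s$ with $\lr_s(w) = \beta$. The crucial point is that the $w$ produced in the proof of Lemma~\ref{lem:LemA} is in fact a $c$-singleton: if $\beta = -\alpha_s$ is a negative simple root, then $w = e$, which is a $c$-singleton (trivially a prefix of $\ww$); if $\beta \in \Phi^+$, the element $w = s_{j_1}\cdots s_{j_i}$ constructed there is a prefix of $\ww$, and hence a $c$-singleton by Theorem~\ref{thm:cSingleton}.

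Once $w$ is known to be a $c$-singleton, the conclusion is immediate. Since $\lr_s(w) = \beta$, we have $\beta \in \cl_c(w)$. By Theorem~\ref{thm:ReadSpeyer}, the extremal rays of $C(w)$ are precisely the rays labeled by the elements of $\cl_c(w)$. Therefore the ray labeled by $\beta$, which is $\rho = f_c^{-1}(\beta)$, is an extremal ray of $C(w)$, as required.

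There is no real obstacle here; the lemma is a direct repackaging of Lemma~\ref{lem:LemA} in geometric language, made possible by the fact that Theorem~\ref{thm:cSingleton} identifies prefixes of $\ww$ (up to commutation) with $c$-singletons. The only point worth flagging explicitly in the write-up is that in the proof of Lemma~\ref{lem:LemA} the chosen $c$-sortable element is always a prefix of $\ww$, so no separate argument is needed to upgrade "$c$-sortable" to "$c$-singleton".
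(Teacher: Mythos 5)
Your proof is correct and follows the paper's argument exactly: invoke Theorem~\ref{thm:ReadSpeyer} to get $\beta = f_c(\rho)$, apply Lemma~\ref{lem:LemA}, observe that the witness constructed in its proof is a prefix of $\ww$ and hence a $c$-singleton by Theorem~\ref{thm:cSingleton}, and conclude from $\beta \in \cl_c(w)$. The paper itself elides the point you flag (it simply cites Lemma~\ref{lem:LemA} as producing a $c$-singleton even though that lemma's statement only promises a $c$-sortable element), so your explicit note is a fair clarification of the same argument.
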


\begin{proof}
   Pick~$\rho\in\F_c^{(1)}$. According to Theorem~\ref{thm:ReadSpeyer},~$f_c(\rho)=\beta$
   for some almost positive root~$\beta$. By Lemma~\ref{lem:LemA}, there is a $c$-singleton~$w$
   and a simple reflection~$s\in S$ such that~$\lr_s(w)=\beta$. But this implies that~$\rho$
   is an extremal ray of~$C(w)$.
\end{proof}

If~$w$ is a $c$-singleton, then~$C(w)\in\F_c^{(n)}$ is the maximal cone $w(D)$ which
is spanned by the rays $\{w(\rho_s)\,|\,s\in S\}$. The main result of this section is

\begin{thm}\label{thm:Explif_c}
   Let $\rho\in\F_c^{(1)}$. There is  a unique simple reflection~$s\in S$ and there is a $c$-singleton~$w$
   such that~$\rho=w(\rho_s)$ and $f_c(w(\rho_s))=\lr_s (w)$.
\end{thm}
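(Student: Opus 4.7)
My plan is to first establish the existence and uniqueness claims and then prove the labeling formula $f_c(w(\rho_s))=\lr_s(w)$ by induction along the chain structure of $c$-singletons. For existence, Lemma~\ref{lem:enoughsing} produces a $c$-singleton $w$ having $\rho$ as an extremal ray of $C(w)$. Since $w$ is a $c$-singleton, $C(w)=w(D)$ is spanned by $\{w(\rho_s)\,|\,s\in S\}$, so $\rho=w(\rho_s)$ for some $s$. Uniqueness of $s$ follows from the fact, recalled at the start of \S\ref{se:CambFan}, that the rays of the Coxeter fan decompose into exactly $n$ $W$-orbits, one for each simple reflection.

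For the labeling identity I plan to prove the more general statement that $f_c(w(\rho_r))=\lr_r(w)$ for every $c$-singleton $w$ and every $r\in S$, by induction on $\ell(w)$. Such an induction is available because, by Proposition~\ref{prop:Prefixes}, every prefix (up to commutations) of the $c$-sorting word of a $c$-singleton is again a $c$-singleton, so every $c$-singleton is reached from $e$ by a chain of $c$-singleton covers. The base case $w=e$ asks that $f_c(\rho_r)=-\alpha_r$, which is the normalization of the Reading--Speyer labeling at the fundamental chamber (matching $\cl_c(e)=\{-\alpha_r\,|\,r\in S\}$).

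For the inductive step, suppose the formula holds for a $c$-singleton $w$ and let $ws$ be a $c$-singleton cover of $w$ with $s\notin D(w)$. Because $s$ fixes each $\rho_r$ with $r\neq s$, the cones $C(w)$ and $C(ws)$ share the facet spanned by $\{w(\rho_r)\,|\,r\neq s\}=\{ws(\rho_r)\,|\,r\neq s\}$, and by induction these $n-1$ rays already carry the labels $\lr_r(w)$. The crux is then the sub-claim that $\lr_r(ws)=\lr_r(w)$ for all $r\neq s$; once it is established, these $n-1$ labels coincide with the $\lr_r(ws)$ for $r\neq s$, and since the equality $|\cl_c(ws)|=n$ forces the collection $\{\lr_r(ws)\,|\,r\in S\}$ to be pairwise distinct, the remaining label $\lr_s(ws)$ must be attached to the opposite ray $ws(\rho_s)$, completing the induction.

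The sub-claim will be the main obstacle, and I plan to verify it via Lemma~\ref{lem:CSing}. In the first case of that lemma the $c$-factorization of $ws$ is $c_{(K_1)}\dots c_{(K_p)}c_{(s)}$, the rightmost occurrence of each $r\neq s$ in the $c$-sorting word is unchanged, and $\lr_r(ws)=\lr_r(w)$ is immediate. In the second case, $s$ is inserted into the $i$-th block via $c_{(K_i\cup\{s\})}=c_{(K_i\setminus L)}\,s\,c_{(L)}$, and Lemma~\ref{lem:CSing} guarantees that $s$ commutes with every simple reflection in $K_{i+1}\cup L$, hence with every reflection lying to the right of the inserted $s$ in the $c$-sorting word of $ws$. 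If the rightmost occurrence of $r\neq s$ sits before the inserted $s$, the prefix is unaltered and $\lr_r(ws)=\lr_r(w)$; if it sits after, the prefix for $ws$ contains exactly one extra $s$ which commutes with everything appearing later in the prefix and with $r$, so it can be slid past the remainder to yield $\lr_r(ws)=u\cdot s(\alpha_r)=u(\alpha_r)=\lr_r(w)$ via $s(\alpha_r)=\alpha_r$. The case $r\notin S(w)$ is automatic since both values equal $-\alpha_r$.
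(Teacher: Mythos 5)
Your overall strategy (induction on the length of a $c$-singleton, process of elimination via the $n-1$ shared rays, cardinality $n$ of $\cl_c$) is sound and close in spirit to the paper, but you induct \emph{upward} along $c$-singleton covers where the paper inducts \emph{downward} by removing the last letter of the $c$-sorting word. Both directions work, and yours is arguably more uniform since it avoids the paper's case split on whether the target letter $s$ equals the last letter; the price is that your sub-claim $\lr_r(ws)=\lr_r(w)$ for $r\neq s$ is genuinely harder, because a $c$-singleton cover $ws$ of $w$ may be obtained by \emph{inserting} $s$ into the middle of the $c$-sorting word rather than appending it. You handle that correctly: Lemma~\ref{lem:CSing} gives that the inserted $s$ commutes with every letter to its right (the reflections in $L\cup K_{i+1}$, and hence in all later nested blocks $K_{i+2},\dots,K_p$), so $s$ can be slid to the end of any prefix that ends before the rightmost $r$, and since $s$ also commutes with $r$ itself, $s(\alpha_r)=\alpha_r$. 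The inductive step is therefore fine, and so is the existence/uniqueness discussion.

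The one genuine gap is the base case. You treat $f_c(\rho_r)=-\alpha_r$ as the ``normalization'' of the Reading--Speyer labeling, but Theorem~\ref{thm:ReadSpeyer} as quoted only asserts the existence of a bijection $f_c$ with $\{f_c(\rho) : \rho \text{ a ray of } C(w)\}=\cl_c(w)$; it does not say which ray of $C(e)=D$ carries which negative simple root. That assignment is itself a small instance of the same process-of-elimination argument, and the paper proves it: for each $r\in S$, the cone $C(r)$ (for the $c$-sortable element $r$, which need not be a $c$-singleton) shares with $D$ the facet $D\cap H_r$ spanned by $\{\rho_t : t\neq r\}$; their labels are $\cl_c(e)\cap\cl_c(r)=\{-\alpha_t : t\neq r\}$; hence the remaining ray $\rho_r$ of $D$ must carry the remaining element $-\alpha_r$ of $\cl_c(e)$. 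Note that your inductive step cannot be used verbatim here, since $r$ is typically not a $c$-singleton (so $C(r)\neq r(D)$); you must appeal directly to the $\cl_c$-compatibility of $f_c$. Add this argument and the proof is complete.
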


\begin{proof}
   The uniqueness of $s\in S$ follows from the fact that any ray of
   the Coxeter fan is of the form $w(\rho_s)$ where $s\in S$ is
   uniquely determined (but not $w$!).

   The first claim follows directly from Lemma~\ref{lem:enoughsing}. We proceed by induction
   on the length of~$w$. If~$\ell(w)=0$ then~$w=e$. In particular,~$e$ is a $c$-singleton
   and~$s=es$ is $c$-sortable for any~$s\in S$. Fix some~$s\in S$. Since
   $\cl_c(e)=-\Delta=\{ -\a_t \,|\, t\in S\}$ and
   $\cl_c(s)=\{ -\a_t \,|\, t\in \la s\ra\}\cup\{\a_s\}$, we conclude $f_c(e(\rho_s))=-\alpha_s$
   as~$s(D)\subset C(s)$ and the rays of~$\F$ in~$s(D)\cap D$ are $\{\rho_t\,|\,t\in \la s\ra\}$.

   Suppose that~$\ell(w)>0$ and let $t\in S$ be the last simple reflection of the $c$-sorting
   word of~$w$. By Proposition~\ref{prop:Prefixes},~$wt$ is a $c$-singleton with
   $\ell(wt)<\ell(w)$. By induction, $f_c(wt(\rho_s))=\lr_s(wt)$ for some~$s\in S$.
   If~$s\neq t$ then $t\in W_{\la s\ra}$ and we conclude~$wt(\rho_s)=w(\rho_s)$\
   and~$\lr_s(wt)=\lr_s(w)$. Hence, we suppose~$s=t$. We have $C(w)\cap C(ws)=w(D)\cap ws(D)$,
   the extremal rays of this cone are $\{w(\rho_t)\,|\, t\in \la s\ra\}$, and their image
   under~$f_c$ is
   \[
     \cl_c(w)\cap\cl_c(ws)=\{\lr_t(w)\,|\, t\in \la s\ra\}=\cl_c(w)\setminus\{\lr_s(w)\}.
   \]
   So $f_c(w(\rho_s))=\lr_s(w)$.
\end{proof}

\section{Realizing generalized associahedra}\label{se:CamPoly}

\subsection{A general result}\label{sub:generalresult}
A fan has to satisfy some obvious conditions in order to have the
combinatorics of the normal fan of a polytope, in particular, the
fan has to be pointed and complete. This condition is far from
sufficient and in general it is quite hard to decide whether a given
fan is polytopal or not. Coarsenings or refinements of a (polytopal
or non-polytopal) fan may or may not be polytopal, examples are
easily derived from the non-polytopal fan
of~\cite[Example~$7.5$]{Ziegler} which is a pointed, complete and
simplicial fan in~$\R^3$. We aim for something less ambitious and
prove a criterion that implies that a given realization of a fan is
the normal fan of a polytope. Our notation is inspired
by~Section~\ref{se:CambFan}.

\medskip
Consider a pointed, complete, and simplicial fan~$\pf \subseteq
\R^n$ with $d$-dimensional cones~$\pf^{(d)}$. To $\rho\in\pf^{(1)}$
we associate a vector~$v_\rho$ such that~$\rho=\R_{\geq 0}[v_{\rho}]$.
Suppose that we are given a collection of positive real numbers $\lambda_\rho$,
one for each $\rho\in\pf^{(1)}$.  We then define
a hyperplane
\[
  H_\rho = \set{x\in\R^n}{\scalprod{x}{v_{\rho}}=\lambda_\rho}
\]
and a half space
\[
  \H_\rho:=\set{x\in\R^n}{\scalprod{x}{v_{\rho}}\leq\lambda_\rho}.
\]
We write
$\H_\rho^+$ if the inequality is strict. Since~$\pf$ is simplicial,
we have  for every maximal cone~$C\in\pf^{(n)}$ a point $x(C)$
defined by $\{x(C)\} := \bigcap_{\rho\in C^{(1)}}H_\rho$. Then
\begin{equation*}
  P := \operatorname{ConvexHull}
  \set{x(C)}{C\in\pf^{(n)}}\quad\textrm{and}\quad \widetilde{P}:= \bigcap_{\rho\in\pf^{(1)}}\H_\rho
\end{equation*}
are  well-defined polytopes of dimension at most $n$.

For instance the case of a $W$-permutahedron constructed from the Coxeter 
fan~$\F$ as explained in \S\ref{subsection:permutahedra} fits nicely in 
this context: $x(w(D))$ is by definition~$M(w)$ and the half spaces~$\H_\rho$ 
are precisely the half spaces~$\H^{\pmb a}_\rho$, for~$\rho\in\F^{(1)}$.  
In this case the two polytopes~$P$ and~$\widetilde{P}$ coincide.

Let~$C\in\pf^{(n)}$ and~$f\in C^{(n-1)}$ a $(n-1)$-dimensional face
of~$C$. An {\em outer normal of~$C$ relative to~$f$} is a vector~$v$
normal to~$f$, that is normal to the hyperplane spanned by~$f$, and
such that $C\subseteq \set{x\in\R^n}{\scalprod{x}{v}\leq 0}$.

Let $C_i,C_j\in\pf^{(n)}$ be two adjacent maximal cones in~$\pf$,
that is,  $C_i \cap C_j \in \pf^{(n-1)}$. A vector~$u$ is said to be
{\em pointing to $C_i$ from $C_j$} if there is an outer normal~$v$
of~$C_j$ relative to $C_i\cap C_j$ such that $\scalprod{u}{v}>0$. In
particular, observe that:
\begin{compactenum}[(i)]
   \item Any outer normal of~$C_j$ relative to~$C_i\cap C_j$ is
         pointing to~$C_i$ from~$C_j$;
   \item If $x_i\in C_i$ and $x_j\in C_j$ are points in the 
         interior of these cones, then the vector~$x_i-x_j$ 
         is pointing to~$C_i$ from~$C_j$;
   \item Any vector not contained in the span of $C_i\cap C_j$ 
         is either pointing to~$C_i$ from~$C_j$ or pointing 
         to~$C_j$ from~$C_i$.
\end{compactenum}
Notice that the vector~$x(C_i)-x(C_j)$ is a normal vector to
$C_i\cap C_j$, but not necessarily pointing to~$C_i$ from~$C_j$,
since~$x(C_i)$ is not necessarily a point in~$C_i$.

\begin{thm}\label{thm:general_result}
    Use the notation as above and suppose that~$x(C_i)-x(C_j)$ points to~$C_i$ 
    from~$C_j$ whenever $C_i,C_j\in\pf^{(n)}$ with $C_i \cap C_j \in \pf^{(n-1)}$.
    Then~$P=\widetilde P$ has (outer) normal fan $\mathcal N(P)=\pf$ and is 
    of dimension~$n$.
\end{thm}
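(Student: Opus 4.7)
The plan is to encode the candidate vertices $x(C)$ into a globally defined piecewise linear function $h\colon \R^n \to \R$ and extract every claim of the theorem from the fact that $h$ is convex. I would set $h(u) := \scalprod{x(C)}{u}$ whenever $u \in C \in \pf^{(n)}$. This is well-defined because whenever $u$ lies in a common face $F = C \cap C'$ of two maximal cones, both $x(C)$ and $x(C')$ satisfy $\scalprod{\cdot}{v_\rho} = \lambda_\rho$ for every $\rho \in F^{(1)}$, so $\scalprod{x(C) - x(C')}{u} = 0$.

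The core step is to verify that $h$ is convex. For a piecewise linear function this reduces to a local wall check: across each $F = C_i \cap C_j \in \pf^{(n-1)}$ the gradient jump $x(C_i) - x(C_j)$ must satisfy $\scalprod{x(C_i) - x(C_j)}{v} \geq 0$ for every $v$ pointing to $C_i$ from $C_j$. By the well-definedness calculation, $x(C_i) - x(C_j)$ lies in the one-dimensional orthogonal complement of $\mathrm{span}(F)$, and an outer normal $w$ of $C_j$ relative to $F$ spans the same line. Writing $x(C_i) - x(C_j) = \mu w$, the hypothesis of the theorem is exactly $\mu > 0$. Any $v$ pointing to $C_i$ from $C_j$ also satisfies $\scalprod{v}{w} > 0$, so decomposing $v$ along $w$ and $\mathrm{span}(F)$ yields $\scalprod{x(C_i) - x(C_j)}{v} > 0$. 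Hence $h$ is convex and the standard fact for convex piecewise linear functions gives $h(u) = \max_{C \in \pf^{(n)}} \scalprod{x(C)}{u}$, with each linear piece an affine minorant of $h$.

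Everything else follows routinely. Evaluating at a ray generator $v_\rho$ and picking any $C_\rho \in \pf^{(n)}$ containing $\rho$ gives $\scalprod{x(C)}{v_\rho} \leq h(v_\rho) = \scalprod{x(C_\rho)}{v_\rho} = \lambda_\rho$ for every $C$ and $\rho$, so $P \subseteq \widetilde P$. Conversely, if some $y \in \widetilde P \setminus P$ were strictly separated from $P$ by a functional $u$, taking a cone $C \ni u$ and writing $u = \sum_{\rho \in C^{(1)}} \mu_\rho v_\rho$ with $\mu_\rho \geq 0$ yields the contradiction
\[
  \scalprod{y}{u} \leq \sum_\rho \mu_\rho \lambda_\rho = \scalprod{x(C)}{u} \leq \max_{x \in P} \scalprod{x}{u}.
\]
For the normal fan, the strictness $\mu > 0$ at every wall forces each linear piece $\scalprod{x(C')}{\cdot}$ with $C' \neq C$ to lie strictly below $h$ at every $u$ in the interior of $C$ (otherwise equality of two linear pieces on an open set would propagate the equality $x(C) = x(C')$ along the dual graph of the fan until it violated the hypothesis at some adjacency); hence $x(C)$ is the unique maximizer of $\scalprod{\cdot}{u}$ over $P$ there, so $\mathcal{N}(P)=\pf$ and $\dim P = n$. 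The main obstacle is the wall-crossing convexity check: one has to translate the hypothesis — phrased abstractly in terms of pointing vectors and outer normals — into precisely the required positive-gradient-jump condition for $h$. Once that is done, the maximum representation and hyperplane separation do all the remaining work.
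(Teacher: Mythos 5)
Your proof is correct, and it follows a genuinely different route from the paper. The paper's argument is an explicit path-tracking one: for each maximal cone~$C$ and each ray~$\rho \notin C^{(1)}$, it constructs a chain of adjacent maximal cones from~$C$ to a cone containing~$\rho$ along which the values $\scalprod{x(C_i)}{v_\rho}$ strictly increase, giving $P\subseteq\widetilde P$; it then compares vertex sets directly. You instead package everything into the candidate support function $h(u)=\scalprod{x(C)}{u}$ and reduce the theorem to the local-to-global convexity criterion for piecewise-linear functions on a complete fan, together with a hyperplane-separation argument for $\widetilde P\subseteq P$. Your translation of the ``pointing'' hypothesis into the positive-gradient-jump condition $x(C_i)-x(C_j)=\mu w$, $\mu>0$, with $w$ an outer normal, is exactly right, and the separation argument for $\widetilde P\subseteq P$ (which, as your computation shows, holds unconditionally) is slicker than the paper's vertex-set comparison. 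Two caveats. First, the local-to-global convexity criterion you cite as a ``standard fact'' is precisely where the paper's path argument lives; it is a legitimate, well-known result, but invoking it relocates the technical core of the proof into a citation rather than eliminating it. Second, the ``propagation along the dual graph'' step for strictness is too terse to check as written: the clean way to make it precise is to note that $D=\{u\,:\,h(u)=\scalprod{x(C')}{u}\}$ is convex, so if $D$ met $\operatorname{int}(C)$ it would contain a segment from $\operatorname{int}(C')$ into $\operatorname{int}(C)$, forcing $x(C'')=x(C')$ for the cone $C''$ adjacent to $C'$ first crossed by that segment, contradicting the hypothesis at that wall. With these two points tightened, the argument is complete.
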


\begin{rem}
   \textnormal{The hypothesis of Theorem~\ref{thm:general_result} is
               satisfied in (at least) three cases.}

   \textnormal{First, the case of $W$-permutahedra constructed from 
               the Coxeter fan~$\F$. Indeed, the points~$M(w)$ are 
               strictly in the cone~$w(D)$. So $M(w)-M(w')$ points 
               to~$w(D)$ from~$w'(D)$ whenever~$w(D)$ and~$w'(D)$ 
               are adjacent cones.}

   \textnormal{Second, the case of the cube constructed from the 
               fan~$\pf$ which is the skew coordinate hyperplane
               arrangement obtained from the hyperplanes which 
               bound the fundamental chamber in the Coxeter arrangement. 
               (Note that this fan corresponds to the usual construction 
               of the Boolean lattice as a quotient of weak order, via 
               the descent map, see~\cite{leconte}).}

   \textnormal{Third, as we shall show, the case of Cambrian fans described on the two next sections.}
\end{rem}

\begin{rem}
	\textnormal{A similar theorem, which includes the assumption that~$\pf$ should be a
                coarsening of a Coxeter fan, appears as~\cite[Theorem A.3]{kam}. That 
                theorem would suffice for our purposes, but we prefer to give the following
                independent proof of the more general theorem.  }
\end{rem}

\begin{proof}[Proof of Theorem~\ref{thm:general_result}]
    Since~$P$ and~$\widetilde P$ are convex polytopes, it is enough
    to show that the vertex set~$V(P)$ of~$P$ is equal to the
    vertex set~$V(\widetilde P)$ of~$\widetilde P$.

    Let us prove first that $P\subseteq \widetilde  P$.
    It suffices to prove $\scalprod{x(C)}{v_\rho}<\lambda_\rho$
    for~$C\in\pf^{(n)}$ and~$\rho\in\pf^{(1)}\setminus C^{(1)}$.

    Let~$C\in\pf^{(n)}$ and $\rho\in\pf^{(1)}\setminus C^{(1)}$. We
    will show that there is a finite sequence $C_0:=C,\ldots, C_k=C'$
    of maximal cones of~$\pf$ such that $\rho\subseteq C'$,
    $C_i\cap C_{i+1}\in \pf^{(n-1)}$ and~$v_\rho$ is pointing to~$C_{i+1}$ 
    from~$C_i$, for $0\leq i<k$.

    For~$x$ in~$C$, we write~$x+\rho$ for the half line
    $\set{x+\lambda v_\rho}{\lambda\geq 0}$ parallel to~$\rho$ and starting at~$x$.
    Write~$C_\rho$ for the union of all maximal cones of~$\pf$ that contain~$\rho$. 
    Since~$\pf$ is a pointed complete fan,~$C_\rho$ contains $n$-dimensional balls
    of arbitrary diameter centered at points of~$\rho$.  In particular,~$C_\rho$ 
    contains such a ball of diameter~$d$, where~$d$ is the distance between the 
    lines containing~$\rho$ and~$x+\rho$. So $(x+\rho)\cap C_\rho\not=\varnothing$ 
    for any point~$x\in C$. Hence there is a maximal cone~$C'$ of~$\pf$ such 
    that~$\rho$ is an extremal ray of~$C'$ and $(x+\rho)\cap C'\not=\varnothing$ 
    for any point~$x\in C$. For any~$x\in C$, the line segment between~$C$ and~$C'$ 
    on~$x+\rho$ determines a sequence of cones $C_0=C,\ C_1,\dots,\ C_p=C'$ of~$\pf$ 
    of arbitrary dimension, those are the cones that~$x+\rho$ meets between~$C$ and~$C'$
    in the natural order on the~$C_i$ induced by the order of points of~$x+\rho$
    given by the parametrization of this half line. From this point, we are looking 
    for a sequence $C=C_0,\ C_{0,1},\ C_1,\ C_{1,2},\dots,C_{k-1,k},\ C_{k}=C'$ such 
    that~$C_{i}$ is a maximal cone and $C_{i,i+1}=C_{i} \cap C_{i+1}$  is a cone of 
    codimension~$1$. Since the number of cones in~$\pf$ is finite, the number of cones 
    met by all possible half lines~$x+\rho$ for~$x\in C$ is finite. Since~$C$ is a full 
    dimensional cone, we may move~$x$ in~$C$ and then may assume that~$x+\rho$
    does not intersect any cone of~$\pf$ of codimension larger than~$1$. In other words, 
    there is a finite sequence $C_0=C,\ C_1,\dots,\ C_k=C'$ of maximal cones of~$\pf$ 
    such that~$C_{i,i+1}=C_i\cap C_{i+1}\in \pf^{(n-1)}$ and~$(x+\rho) \cap C_i\not =\varnothing$. 
    Pick~$y_i$ in the interior of~$C_i$ and in~$x+\rho$. So $y_{i+1}-y_i$ points 
    to~$C_{i+1}$ from~$C_i$. Since the cones $C_0,\dots,C_k$ have the same order as 
    the points on~$x+\rho$, the distance from~$x$ to~$y_i$ is strictly smaller than the 
    distance from~$x$ to~$y_{i+1}$. This means the vector $y_{i+1}-y_i= \kappa v_\rho$ 
    with~$\kappa >0$. Hence~$v_\rho$ points to~$C_{i+1}$ from~$C_i$.

    Now, we consider the piecewise linear path from~$x(C_0)$ to~$x(C_{k})$ that traverses
    from~$x(C_i)$ to~$x(C_{i+1})$. Since $x(C_{i+1})-x(C_i)$ points to~$C_{i+1}$ 
    from~$C_i$, the vector~$x(C_{i+1})-x(C_i)$ is an outer normal to~$C_i$ relative 
    to~$C_i\cap C_{i+1}$, and since~$v_\rho$ points to~$C_{i+1}$ from~$C_i$ for $0\leq i<k$, 
    we conclude that~$\scalprod{x(C_{i+1})-x(C_i)}{v_\rho}>0$. Hence
    \[
      \scalprod{x(C_0)}{v_\rho}<\ldots<\scalprod{x(C_k)}{v_\rho}=\lambda_\rho.
    \]
    This proves $P\subseteq \widetilde P$.

    For any vertex~$x(C)$ of~$P$ we know that~$x(C)$ is a vertex of~$\widetilde P$ since~$x(C)$ is defined as
    the intersection of hyperplanes that bound half spaces defining~$\widetilde P$ and~$P \subseteq \widetilde P$.
    Moreover, for any maximal cone~$C\in\pf$, the point~$x(C)$ must be on the boundary of~$P$ for that reason.
    But now it follows that the points~$x(C)$ are in convex position because~$\pf$ is simplicial and
    $\bigcap_{\rho\in C^{(1)}}H_\rho$ is a singleton. Hence
    \[
      \set{x(C)}{C\in\pf^{(n)}}=V(P)\subseteq V(\widetilde P).
    \]
    The rays~$\rho\in C^{(1)}$ span the outer normal cone of~$x(C)\in P$ for every maximal cone~$C\in\pf$.
    The hyperplanes~$H_\rho$, $\rho \in C^{(1)}$, bound the half spaces of~$x(C) \in \widetilde P$, so the normal
    cones agree. But the union of these cones equals the complete fan~$\pf$, so~$\widetilde P$ does not have any
    additional vertices.

    The claim that $\operatorname{dim}(P)=n$ follows from the fact that $\lambda_\rho>0$ for all $\rho\in\pf^{(1)}$:
    a neighborhood of $0$ is contained in~$P$.
\end{proof}

\subsection{Realizations of generalized associahedra}
We apply Theorem~\ref{thm:general_result} to show how $c$-Cambrian
fans~$\F_c$ and associahedra~$\Ass^{\pmb a}_c(W)$ relate. The
associahedron is described as intersection of certain half spaces of
the permutahedron~$\Perm^{\pmb a}(W)$ determined by the rays
of~$\F_c$ and the common vertices of~$\Ass^{\pmb a}_c(W)$
and~$\Perm^{\pmb a}(W)$ are characterized in terms of
$c$-singletons. The proof of Theorem~\ref{thm:main_theorem} is
deferred to Section~\ref{subsec:proof}.

\begin{thm}\label{thm:main_theorem}
   Let~$c$ be a Coxeter element of~$W$ and choose a point~$\pmb a$ in the
interior of the
fundamental chamber $D$,
   to fix a realization of the permutahedron~$\Perm^{\pmb a}(W)$.
   \begin{compactenum}[(i)]
      \item The polyhedron
            \[
              \Ass^{\pmb a}_{c}(W)=\bigcap_{\rho\in\F_c^{(1)}} \H^{\pmb a}_{\rho}
            \]
            is a simple polytope of dimension~$n$ with $c$-Cambrian fan~$\F_c$ as normal fan.
      \item The vertex sets~$V(\Ass^{\pmb a}_{c}(W))$ and~$V(\Perm^{\pmb a}(W))$ satisfy
            \[
              V(\Ass^{\pmb a}_{c}(W))\cap V(\Perm^{\pmb a}(W))
                    =\{ M(w) \, | \, w \text{ is a $c$-singleton}\}.
            \]
\end{compactenum}
\end{thm}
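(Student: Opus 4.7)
The plan is to apply Theorem~\ref{thm:general_result} to the $c$-Cambrian fan $\F_c$ with the half-spaces $\H^{\pmb a}_{\rho}$ inherited from the permutahedron. By Theorem~\ref{thm:Explif_c}, each ray $\rho\in\F_c^{(1)}$ can be written $\rho=w(\rho_s)$ for some $c$-singleton $w$ and a unique $s\in S$; hence $v_\rho:=w(v_s)$ and $\lambda_\rho:=\scalprod{\pmb a}{v_s}=a_s$ are well defined and $\lambda_\rho>0$, since $\pmb a\in\operatorname{int}(D)$. As $\F_c$ is simplicial, the point $x(C(w))$ attached to each $c$-sortable $w$ is well defined as the unique intersection of the $n$ hyperplanes $H^{\pmb a}_\rho$, $\rho\in C(w)^{(1)}$. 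What remains to prove is the pointing hypothesis of Theorem~\ref{thm:general_result}.

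To check this, fix adjacent maximal cones $C(w),\,C(w')\in\F_c^{(n)}$ with common wall $F=C(w)\cap C(w')$. Since $F$ is a facet of the convex cone $C(w)$ and is also a union of walls of the refinement $\F$, its linear span is a single reflecting hyperplane $H_t$. The $n-1$ defining equations coming from the shared rays $\rho\in F^{(1)}$ force $x(C(w))-x(C(w'))$ to be orthogonal to $\operatorname{span}(F)=H_t$, hence a scalar multiple of $\alpha_t$. Let $\rho\in C(w)^{(1)}\setminus F^{(1)}$ be the extra ray of $C(w)$; since $v_\rho$ lies on the $C(w)$-side of $H_t$, it suffices to prove the strict inequality $\scalprod{x(C(w'))}{v_\rho}<\lambda_\rho$. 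Equivalently, one must show that $x(C(w'))$ lies in the open half-space $\H^{{\pmb a},+}_{\rho}$ for every $\rho\in\F_c^{(1)}\setminus C(w')^{(1)}$. I expect this inequality to be the main obstacle. The natural strategy is to proceed by induction along an unrefinable chain of $c$-sortable elements joining $w'$ to a $c$-singleton: the base case follows from the identity $x(C(w))=M(w)$ whenever $w$ is a $c$-singleton, reducing the claim to the corresponding strict inequality at a vertex of the permutahedron; at each inductive step, one computes the positive scalar $\mu$ such that $x(C(\cdot))$ shifts by $\mu\,\alpha_t$ when crossing a wall and verifies that this shift preserves the desired strict inequalities. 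Theorems~\ref{thm:cSingleton} and~\ref{thm:Explif_c} provide the combinatorial bookkeeping needed to track how the extremal rays and their labels change across each flip.

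For part (ii), once part (i) identifies $\F_c$ as the normal fan of $\Ass^{\pmb a}_c(W)$, the common vertices of $\Ass^{\pmb a}_c(W)$ and $\Perm^{\pmb a}(W)$ correspond to those maximal cones $C\in\F_c^{(n)}$ that are also maximal cones of the Coxeter fan~$\F$. By construction, a $c$-sortable $w$ satisfies $C(w)\in\F^{(n)}$ if and only if the fiber $(\pi^c_\downarrow)^{-1}(w)$ is a singleton, i.e. precisely when $w$ is a $c$-singleton; in that case $C(w)=w(D)$ and the defining equations of $x(C(w))$ read $\scalprod{x}{w(v_s)}=a_s=\scalprod{w(\pmb a)}{w(v_s)}$ for $s\in S$, whence $x(C(w))=w(\pmb a)=M(w)$. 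Conversely, if $M(w)=x(C)$ for some $C\in\F_c^{(n)}$, then because the facets of $\Perm^{\pmb a}(W)$ incident to $M(w)$ are exactly those indexed by the rays of the Coxeter chamber $w(D)$, the equality $\bigcap_{s\in S}H^{\pmb a}_{(w,s)}=\{M(w)\}=\bigcap_{\rho\in C^{(1)}}H^{\pmb a}_\rho$ forces $C=w(D)$, so $w(D)\in\F_c^{(n)}$ and $w$ is a $c$-singleton.
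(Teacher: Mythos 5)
Your reduction to Theorem~\ref{thm:general_result} is the right frame and matches the paper's strategy: you correctly note that $x(C(w))-x(C(w'))$ is forced by the shared wall equations to be a multiple of $\alpha_t$, that the pointing condition is equivalent to the single strict inequality $\scalprod{x(C(w'))}{v_\rho}<\lambda_\rho$ for the extra ray $\rho$ of $C(w)$, and your argument for part (ii) (singleton $\Leftrightarrow$ $C(w)=w(D)\in\F^{(n)}$, whence $x(C(w))=M(w)$, and the converse via the incidence of $M(w)$ with exactly the hyperplanes $H^{\pmb a}_{(w,s)}$) is essentially the paper's. (A small slip: $\lambda_\rho=\scalprod{\pmb a}{v_s}$ is not $a_s$ in general since the fundamental weights are not orthonormal; only its positivity is needed.)

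The genuine gap is that the pointing hypothesis itself --- the entire analytic content of the theorem --- is not proved. You acknowledge the inequality is ``the main obstacle'' and then propose to induct along an unrefinable chain of $c$-sortable elements from a $c$-singleton, computing ``the positive scalar $\mu$'' at each wall crossing; but establishing that $\mu>0$ at a given wall \emph{is} the pointing condition for that wall, so as stated the induction is circular: nothing in your sketch identifies a well-founded quantity that decreases, nor a mechanism that determines the sign of the shift. In the paper this is Lemma~\ref{lem:final_lemma}, proved by a double induction on the rank $|S|$ and on $\ell(w)$, and it cannot be run along a chain inside a fixed $\F_c$ alone: one fixes $s$ initial in $c$ and uses Reading--Speyer's transfer results (Lemmas~\ref{lem:w_c_sortable} and~\ref{lem:six}, and Lemma~\ref{lem:cones}) to move the adjacent pair either to $\F_{scs}$ (conjugating by $s$, reducing length) when both cones lie above $H_s$, or to the parabolic fan $\F_{sc}$ of $W_{\la s\ra}$ (reducing rank) when both lie below, with the explicit base case Lemma~\ref{lem:s_initial_for_c}. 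The hardest case, where $H_s$ separates $C(w)$ from $C(w')$, needs the identification $g=w$ of the reflected cone, a further reduction to the previous cases, and only then a chain argument --- which works precisely because the previously established cases guarantee each step of the chain shifts $x(\cdot)$ by a positive multiple of a \emph{negative} root, whose $\alpha_s$-coefficient is $\leq 0$, so that $\scalprod{x(C(y_i))}{v_s}$ is monotone along the chain and the strict inequality from Lemma~\ref{lem:s_initial_for_c} survives. Your proposal contains none of this machinery (no use of $s$ initial in $c$, no rank reduction, no control of the sign of the root), so the key inequality $\nu_{u_1}+\nu_{u_1'}>\sum_i b_i\nu_{u_i}$ remains unestablished and the proof of part (i) is incomplete.
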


The first statement implies that the facet-supporting half spaces of the associahedron form a
subset of the facet-supporting half spaces of the permutahedron. We mentioned this in the
introduction in the context of $c$-admissible half spaces. A facet-supporting half
space~$\H^{\pmb a}_{\rho}$ of the permutahedron~$\Perm^{\pmb a}(W)$ is {\em $c$-admissible}
if~$M(w)\in H^{\pmb a}_{\rho}$ for some $c$-singleton~$w$. We rephrase the first statement
as follows:

\begin{cor}\label{cor:Intro}
   The associahedron~$\Ass^{\pmb a}_{c}(W)$ is the intersection of all $c$-admissible halfspaces
   of~$\Perm^{\pmb a}(W)$.
\end{cor}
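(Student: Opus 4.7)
The plan is to unpack the two sets of halfspaces and show they coincide. By Theorem~\ref{thm:main_theorem}(i), we already know that $\Ass^{\pmb a}_c(W) = \bigcap_{\rho \in \F_c^{(1)}} \H^{\pmb a}_\rho$, so the corollary amounts to verifying the equality of indexing sets
\[
   \{\H^{\pmb a}_\rho \mid \rho \in \F_c^{(1)}\}
     \;=\; \{\H^{\pmb a}_\rho \mid \rho \in \F^{(1)} \text{ and } \H^{\pmb a}_\rho \text{ is $c$-admissible}\}.
\]
Since $\F_c$ is a coarsening of $\F$, every ray of $\F_c$ is already a ray of $\F$, so both sides are subsets of the $H$-representation of $\Perm^{\pmb a}(W)$. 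The containment thus reduces to two separate ray-by-ray checks.

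For the forward inclusion, I would fix $\rho \in \F_c^{(1)}$ and invoke Lemma~\ref{lem:enoughsing} to produce a $c$-singleton $w$ such that $\rho$ is an extremal ray of $C(w)$. Since $w$ is a $c$-singleton, $C(w)$ is the single Coxeter chamber $w(D)$, and its extremal rays are precisely $\{w(\rho_s) \mid s \in S\}$. Thus $\rho = w(\rho_s)$ for some $s\in S$. Using the vertex description $M(w) = \bigcap_{s\in S} H^{\pmb a}_{(w,s)}$ of Section~\ref{subsection:permutahedra}, the $c$-singleton vertex $M(w)$ lies on $H^{\pmb a}_\rho$, so $\H^{\pmb a}_\rho$ is $c$-admissible by definition.

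For the reverse inclusion, suppose $\H^{\pmb a}_{\rho'}$ is $c$-admissible: there is a $c$-singleton $w$ with $M(w) \in H^{\pmb a}_{\rho'}$. The vertex $M(w)$ lies on exactly the $n$ hyperplanes $H^{\pmb a}_{(w,s)}$ for $s \in S$, corresponding to the rays $w(\rho_s)$ of $\F$. Hence $\rho' = w(\rho_s)$ for some $s$. Because $w$ is a $c$-singleton, $C(w) = w(D)$ is a maximal cone of $\F_c$, and all of its extremal rays $w(\rho_s)$ lie in $\F_c^{(1)}$. Therefore $\rho' \in \F_c^{(1)}$, as required.

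There is essentially no serious obstacle here beyond assembling the two prior results: the content is already packed into Theorem~\ref{thm:main_theorem} (especially part~(i) for the intersection description, and part~(ii) together with Lemma~\ref{lem:enoughsing} for the ray-chamber correspondence) and the observation that the chamber of a $c$-singleton in $\F_c$ coincides with its Coxeter chamber, so that the extremal rays of the Cambrian chamber are exactly the rays on whose bounding hyperplanes $M(w)$ sits. The corollary is then a direct rephrasing.
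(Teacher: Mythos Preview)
Your proof is correct and matches the paper's treatment: the paper does not give a separate proof but simply introduces the notion of $c$-admissible halfspace and states the corollary as a rephrasing of Theorem~\ref{thm:main_theorem}(i). Your two inclusions make explicit exactly the content of that rephrasing, using Lemma~\ref{lem:enoughsing} (equivalently Theorem~\ref{thm:Explif_c}) for the forward direction and the fact from \S\ref{subsection:permutahedra} that $M(w)$ lies on $H^{\pmb a}_{(w',s)}$ if and only if $H^{\pmb a}_{(w',s)}=H^{\pmb a}_{(w,s)}$ for the reverse direction.
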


\noindent
We illustrate these results with a basic example.

\begin{figure}[b]
      \begin{center}
      \begin{minipage}{0.90\linewidth}
         \begin{center}
         \begin{overpic}
            [width=\linewidth]{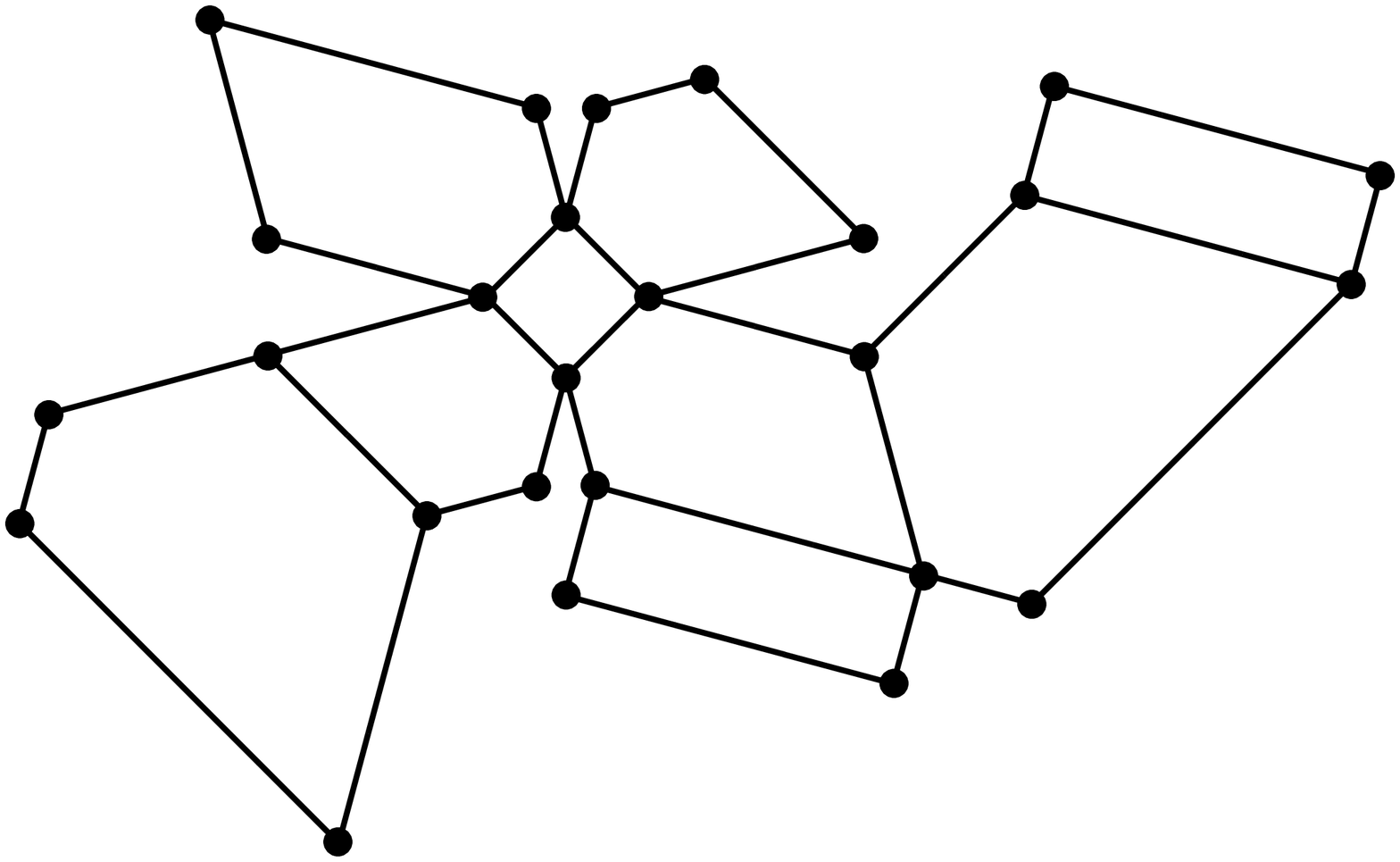}
            \put(29.5,24){\tiny $A$}
            \put(42,21.5){\tiny $A$}
            \put(36.5,28){\tiny $B$}
            \put(43.5,28){\tiny $B$}
            \put(42,32){\tiny $C$}
            \put(34.7,39.3){\tiny $D$}
            \put(44.5,35){\tiny $E$}
            \put(38,42.3){\tiny $F$}
            \put(36,46){\tiny $G$}
            \put(43.5,46.5){\tiny $G$}
            \put(69,47){\tiny $G$}
            \put(47,47.3){\tiny $H$}
            \put(65.5,40.5){\tiny $H$}
            \put(58,17){\tiny $1$}
            \put(66.5,21.5){\tiny $1$}
            \put(27,8.5){\tiny $1$}
            \put(62,23){\tiny $2$}
            \put(59,33){\tiny $3$}
            \put(55,41.2){\tiny $3$}
            \put(82.5,37){\tiny $4$}
            \put(11.5,24.5){\tiny $4$}
            \put(85,43){\tiny $5$}
            \put(12.5,28.8){\tiny $5$}
            \put(22.5,50.5){\tiny $5$}
            \put(22.5,31.5){\tiny $6$}
            \put(25,42){\tiny $6$}
            \put(77,1){\begin{overpic}[width=4cm]{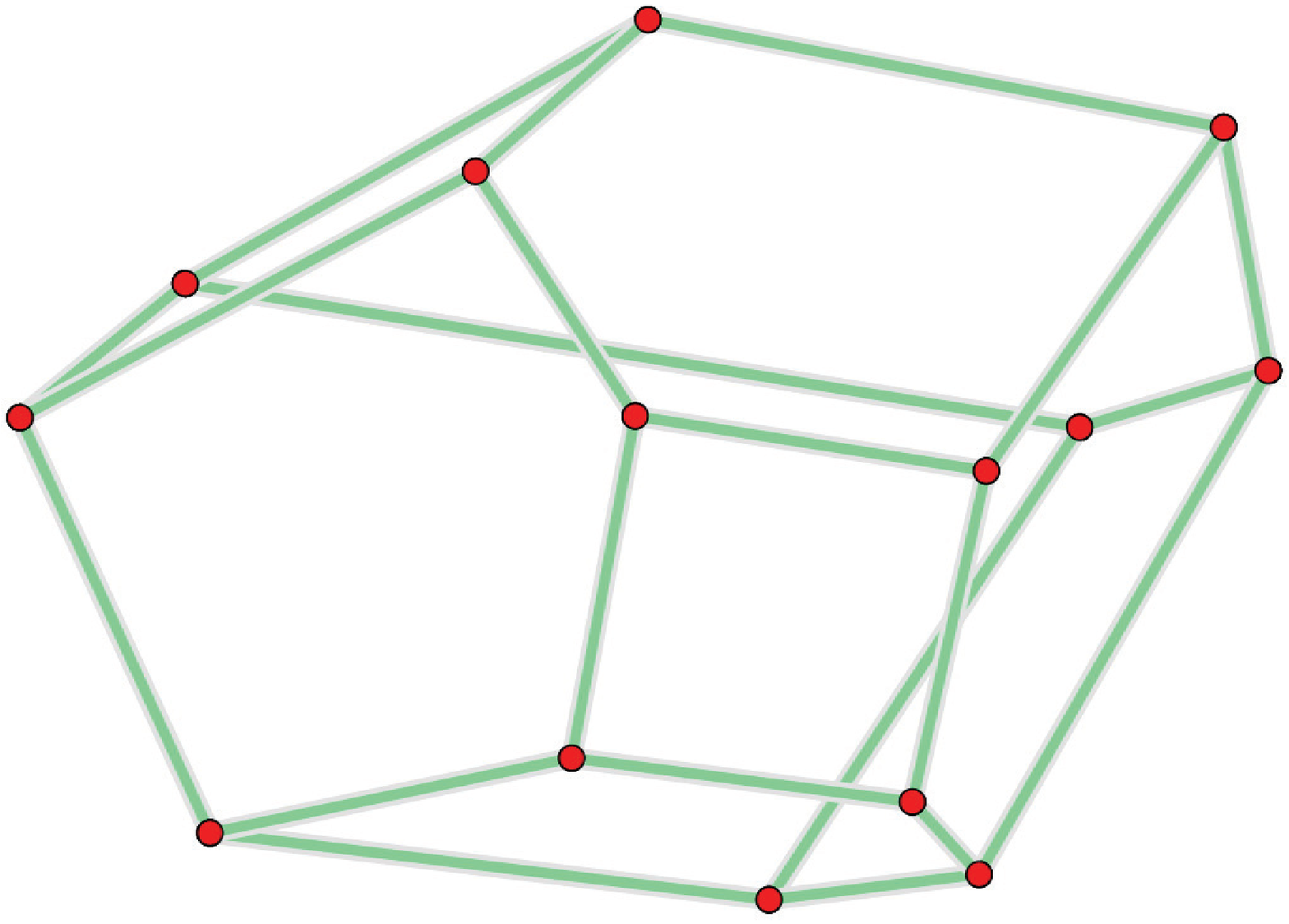}
                           \put(53,2){\tiny $A$}
                           \put(68,5){\tiny $B$}
                           \put(64.8,12.5){\tiny $C$}
                           \put(43.5,16.5){\tiny $D$}
                           \put(60.5,28.5){\tiny $E$}
                           \put(39.5,33.5){\tiny $F$}
                           \put(37.7,49.7){\tiny $G$}
                           \put(43.8,63){\tiny $H$}
                           \put(73.5,31){\tiny $1$}
                           \put(86.5,36){\tiny $2$}
                           \put(83.5,55){\tiny $3$}
                           \put(13,45){\tiny $4$}
                           \put(3,33.5){\tiny $5$}
                           \put(16,6){\tiny $6$}
                        \end{overpic}}
            \put(20,23){$\rho_1$}
            \put(47,43){$\rho_8$}
            \put(49,31){$\rho_5$}
            \put(67,33){$\rho_6$}
            \put(76,44){$\rho_7$}
            \put(39.5,37.5){$\rho_4$}
            \put(49,21){$\rho_3$}
            \put(32,32){$\rho_2$}
            \put(30,45){$\rho_9$}
         \end{overpic}
         \end{center}
         \caption[]{An unfolding of the associahedron~$\Ass^{\pmb a}_{c}(S_4)$ with~$c=s_1s_2s_3$.
                    The $2$-faces are labelled by~$\rho_i \in \F_c^1$ for the facet-defining
                    hyperplane~$H_{\rho_i}^{\pmb a}$.}
         \label{fig:unfolded_a3_asso_123}
      \end{minipage}
      \end{center}
\end{figure}

\begin{expl}
   \textnormal{The first statement of Theorem~\ref{thm:main_theorem} claims that the intersection of a subset of the
        half spaces~$\H^{\pmb a}_{\rho}$ of~$\Perm^{\pmb a}(W)$ yields a generalized
        associahedron~$\Ass^{\pmb a}_c(W)$ if we restrict to half spaces such that~$\rho$ is a
        ray of the $c$-Cambrian fan~$\F_c$. Figures~\ref{fig:unfolded_a3_asso_123}
        and~\ref{fig:unfolded_a3_asso_213} illustrate this for~$W=S_4$ generated by $
        S=\{s_1,s_2,s_3\}$. We use the following conventions:
        The point~$\pmb a$ used to fix a realization of~$\Perm^{\pmb a}(W)$ is labeled~$A$.
        A facet of the associahedron~$\Ass_c^{\pmb a}(W)$ is labeled by the ray~$\rho_j \in \F_c$
        that is perpendicular to that facet. Recall that each ray~$\rho$ can be written
        as~$w(\rho_{s_i})$ for some (non-unique) $c$-singleton~$w$ and some (unique) simple
        reflection~$s_i$ by Lemma~\ref{lem:enoughsing}. If the Coxeter element~$c=s_1s_2s_3$ is
        chosen (Figure~\ref{fig:unfolded_a3_asso_123}), then we can express the ray~$\rho_i\in\F_c$
        that corresponds to the ($c$-admissible) half space~$\H^{\pmb a}_{\rho_i}$ as follows:}
        \begin{align*}
           \rho_1 &=e(\rho_{s_1}),\\
           \rho_2 &=e(\rho_{s_3})=s_1(\rho_{s_3})=s_1s_2(\rho_{s_3})=s_1s_2s_1(\rho_{s_3}),\\
           \rho_3 &=e(\rho_{s_2})=s_1(\rho_{s_2}),\\
           \rho_4 &= s_1s_2(\rho_{s_2})= s_1s_2s_3(\rho_{s_2}) = s_1s_2s_1(\rho_{s_2})= s_1s_2s_3s_1(\rho_{s_2}),\\
           \rho_5 &= s_1(\rho_{s_1})=s_1s_2(\rho_{s_{1}})=s_1s_2s_3(\rho_{s_1}),\\
           \rho_6 &= s_1s_2s_3s_1s_2s_1(\rho_{s_1}), \\
           \rho_7 &= s_1s_2s_3s_1s_2(\rho_{s_2})= s_1s_2s_3s_1s_2s_1(\rho_{s_2}),\\
           \rho_8 &= s_1s_2s_3(\rho_{s_3}) = s_1s_2s_3s_1(\rho_{s_3}) = s_1s_2s_3s_1s_2(\rho_{s_3})
                   = s_1s_2s_3s_1s_2s_1(\rho_{s_3}),\text{ and}\\
           \rho_9 &=s_1s_2s_1(\rho_{s_1})=s_1s_2s_3s_1(\rho_{s_1})=s_1s_2s_3s_1s_2(\rho_{s_1}).
        \end{align*}
    \noindent
    \begin{figure}
          \begin{center}
          \begin{minipage}{0.90\linewidth}
             \begin{center}
             \begin{overpic}
                [width=\linewidth]{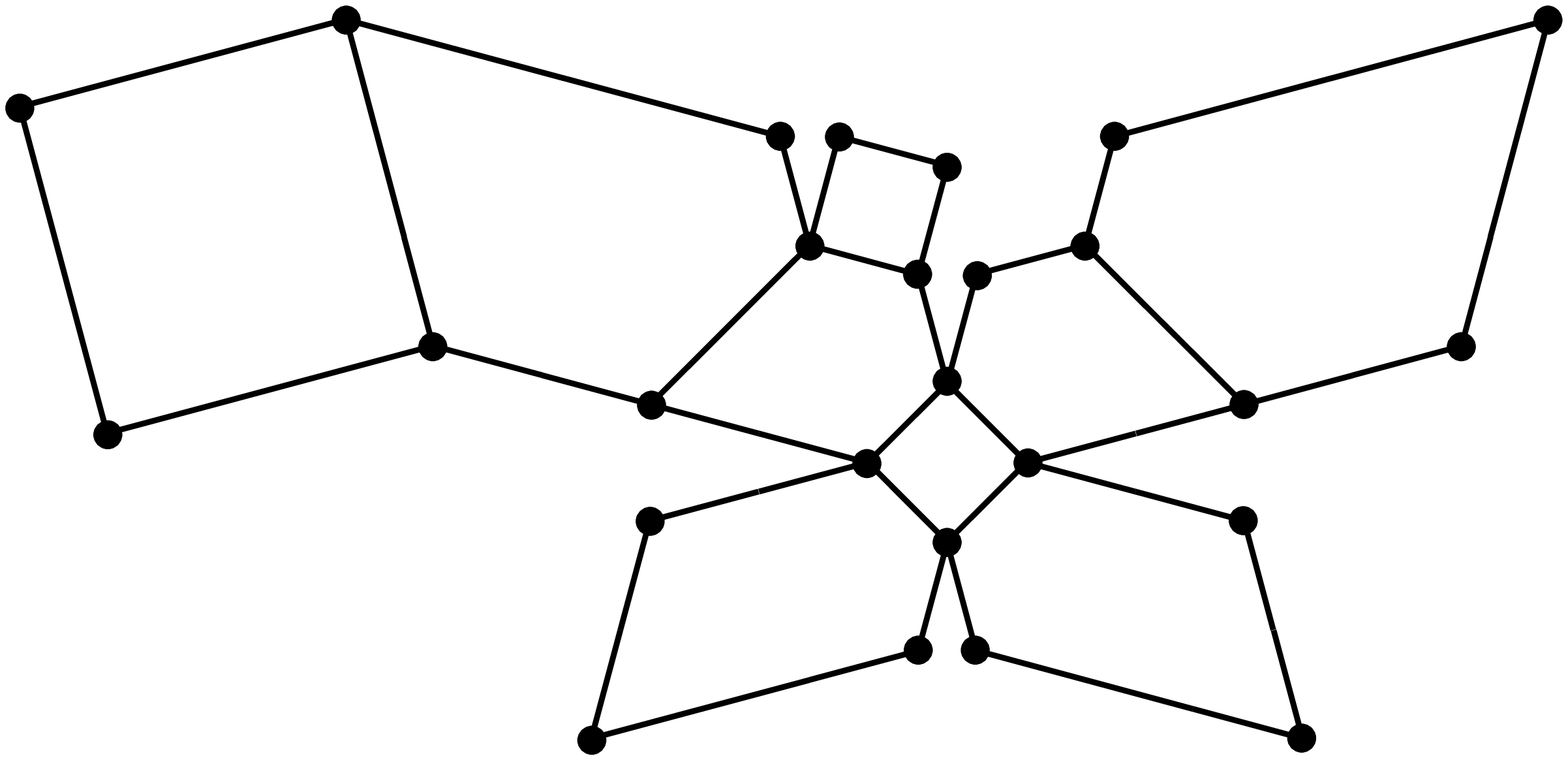}
                \put(55,18){\tiny $A$}
                \put(62,18){\tiny $A$}
                \put(11.5,31){\tiny $A$}
                \put(60.5,22){\tiny $B$}
                \put(53,25){\tiny $C$}
                \put(63.5,25){\tiny $D$}
                \put(60.5,32){\tiny $E$}
                \put(62,36.5){\tiny $F$}
                \put(55,36.5){\tiny $F$}
                \put(48,39.7){\tiny $G$}
                \put(60.5,44){\tiny $H$}
                \put(68.5,39.5){\tiny $H$}
                \put(47.5,44.5){\tiny $I$}
                \put(53.5,44){\tiny $I$}
                \put(70,44.5){\tiny $I$}
                \put(8,46.5){\tiny $1$}
                \put(77.5,13.5){\tiny $1$}
                \put(87,35.5){\tiny $1$}
                \put(74.5,22){\tiny $2$}
                \put(76.5,33){\tiny $2$}
                \put(91,50){\tiny $3$}
                \put(23.5,50){\tiny $3$}
                \put(40.5,13.5){\tiny $4$}
                \put(28,35){\tiny $4$}
                \put(43,22){\tiny $5$}
                \put(44.5,31.5){\tiny $5$}
                \put(77,1){\begin{overpic}
                                [width=3.5cm]{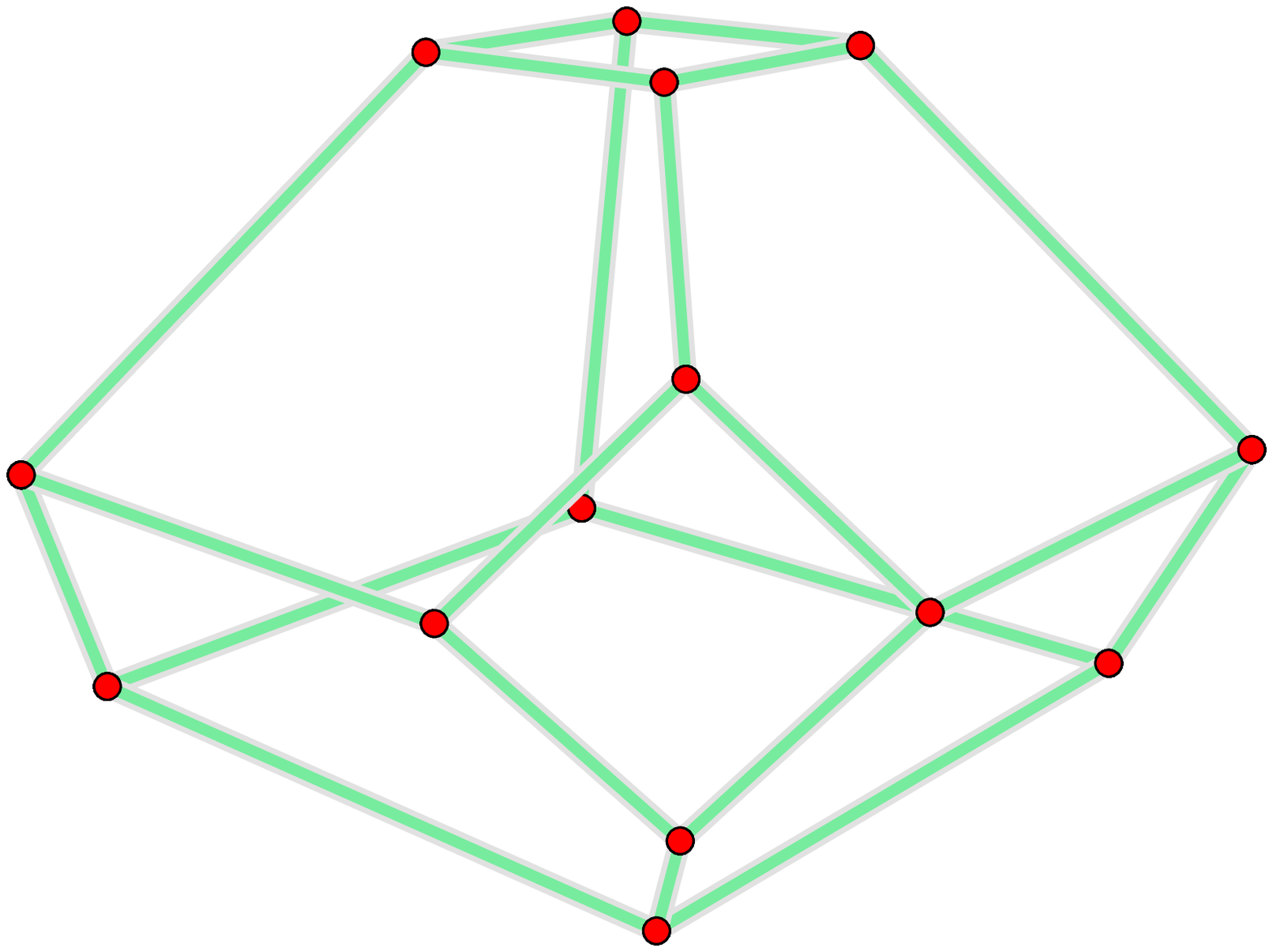}
                                \put(58,0){\tiny $A$}
                                \put(53,8){\tiny $B$}
                                \put(39,18){\tiny $C$}
                                \put(73,18){\tiny $D$}
                                \put(62,39){\tiny $E$}
                                \put(61,52){\tiny $F$}
                                \put(37,59){\tiny $G$}
                                \put(74,59){\tiny $H$}
                                \put(55,63.5){\tiny $I$}
                                \put(87,17){\tiny $1$}
                                \put(97,31){\tiny $2$}
                                \put(52,25){\tiny $3$}
                                \put(14,29){\tiny $4$}
                                \put(21,15){\tiny $5$}
                \end{overpic}}
                \put(17,41){$\rho_4$}
                \put(36,41){$\rho_5$}
                \put(51,33.4){$\rho_7$}
                \put(65,33.5){$\rho_8$}
                \put(77,41){$\rho_9$}
                \put(58,27.6){$\rho_2$}
                \put(48,20){$\rho_1$}
                \put(67,20){$\rho_3$}
                \put(54,42){$\rho_6$}
             \end{overpic}
             \end{center}
             \caption[]{An unfolding of the associahedron~$\Ass^{\pmb a}_{c}(S_4)$ with~$c=s_2s_1s_3$.
                       The $2$-faces are labeled by~$\rho=(w,s) \in \F_c^1$ for the facet-defining
                       hyperplane~$H_\rho^{\pmb a}$.}
             \label{fig:unfolded_a3_asso_213}
          \end{minipage}
          \end{center}
    \end{figure}
    \noindent
   \textnormal{The claim of the second statement of Theorem~\ref{thm:main_theorem} is that the
         common vertices of~$\Perm^{\pmb a}(W)$ and~$\Ass^{\pmb a}_c(W)$ are the
         points $M(w)=w(M(e))$ for $w$ a $c$-singleton. It is straightforward to
         verify this claim directly if~$c=s_1s_2s_3$ in Figure~\ref{fig:unfolded_a3_asso_123}:
         The common vertices of~$\Ass^{\pmb a}_{c}(W)$ and~$\Perm^{\pmb a}(W)$ are labeled~$A$
         through~$H$ and we have}
         \begin{alignat*}{3}
            A &= M(e),               &\qquad B &= M(s_1),                &\qquad C &= M(s_1s_2),\\
            D &= M(s_1s_2s_1),       &\qquad E &= M(s_1s_2s_3),          &\qquad F &= M(s_1s_2s_3s_1),\\
            G &= M(s_1s_2s_3s_1s_2), &\qquad H &= M(s_1s_2s_3s_1s_2s_1). &         &
          \end{alignat*}
    \textnormal{If the Coxeter element is~$c=s_2s_1s_3$ (Figure~\ref{fig:unfolded_a3_asso_213}) then we
                have the following list of expressions for~$\rho_i\in\F_c$ (we do not list all possible
                expressions for~$\rho_i$):}
         \begin{alignat*}{3}
            \rho_1 &= e(\rho_{s_3}),    &\qquad \rho_2 &= s_2(\rho_{s_2}),
                                        &\qquad \rho_3 &= e(\rho_{s_1}), \\
            \rho_4 &= e(\rho_{s_2}),    &\qquad \rho_5 &= s_2s_1s_3s_2s_3(\rho_{s_3})
                                        &\qquad \rho_6 &= s_2s_1s_3s_2s_1s_3(\rho_{s_2}), \\
            \rho_7 &=s_2s_1(\rho_{s_1}),&\qquad \rho_8 &= s_2s_3(\rho_{s_3}),
                                        &\qquad \rho_9 &= s_2s_1s_3s_2s_1(\rho_{s_1}).
         \end{alignat*}
    \noindent
    \textnormal{The common vertices of the permutahedron and associahedron are labeled~$A$ through~$I$
         and we have}
         \begin{alignat*}{3}
            A &= M(e),               &\qquad B &= M(s_2),                &\qquad C &= M(s_2s_1),\\
            D &= M(s_2s_3),          &\qquad E &= M(s_2s_1s_3),          &\qquad F &= M(s_2s_1s_3s_2),\\
            G &= M(s_2s_1s_3s_2s_3), &\qquad H &= M(s_2s_1s_3s_2s_1), &\qquad I &= M(s_2s_1s_3s_2s_1s_3).
          \end{alignat*}
\end{expl}

\subsection{Proof of Theorem~\ref{thm:main_theorem}}\label{subsec:proof}
Let~$c \in W$ be a Coxeter element and let $\pmb a$ be in the interior
of~$D$. The proof of Theorem~\ref{thm:main_theorem} is based on 
Theorem~\ref{thm:general_result}. We use the same notations as 
in~\S\ref{sub:generalresult} applied to the $c$-Cambrian fan~$\F_c$ and
denote by~$x(C)$ the intersection point of the hyperplanes 
$H^{\pmb a}_\rho$ for $\rho$ the extremal rays of a maximal 
cone~$C$ of~$\F_c$.

Let~$w$ and~$w^\prime$ be distinct $c$-sortable elements such that
the associated maximal cones~$C:=C(w)$ and~$C^\prime:=C(w^\prime)$
of the Cambrian fan~$\F_c$ intersect in a cone of codimension~$1$.
So either~$w$ is a cover of~$w'$ or~$w'$ is a cover of~$w$ in the
lattice of $c$-sortable elements. Without loss of generality, we 
may assume that~$w$ is a cover of~$w^\prime$. To meet the requirement 
of Theorem~\ref{thm:general_result}, we have to prove that the 
vector $x(C)-x(C')$ points to~$C$ from~$C'$.

\medskip
We use the following notations. 
Set~$\mathcal R := \set{w(v_s)}{w\in W, s\in S}$ so that the set of rays
of the Coxeter fan is 
$\R_{\geq 0}\mathcal R := \set{\lambda v}{\lambda \geq 0, v \in \mathcal R}$. 
For~$v\in \mathcal R$, we write $\nu_v:=\scalprod{\pmb a}{v_s}=\scalprod{M(w)}{v}>0$ 
which depends only on~$s$.

The intersection~$C\cap C'$ is contained in a hyperplane~$H_t$ for
some reflection~$t\in W$ since~$\F_c$ is a coarsening of the
Coxeter fan~$\F$. We now show which of the two roots associated 
to $H_t$ is an outer normal to $C'$ relative to $C'\cap C$.

\begin{lem}\label{lem:alpha_is_negative}
   Let $w, w^\prime \in W$ be $c$-sortable elements such that~$w$ is a cover 
   of~$w^\prime$ in the lattice of $c$-sortable elements. $C(w)\cap C(w^\prime)$ 
   is an $n-1$-dimensional cone of~$\F_c$, and suppose it lies in~$H_t$
   for some reflection~$t$. Let the roots associated to~$t$ be~$\pm \beta$.
   If~$\beta$ is an outer normal to~$C'$ relative to~$C'\cap C$, then~$\beta$ 
   is a negative root.
\end{lem}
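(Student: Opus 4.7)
The plan is to exhibit an explicit pair of adjacent Coxeter chambers inside $C'$ and $C$ that witness the shared codimension-one face, and then read off the outer normal from the $W$-action on roots.

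First, since each Cambrian cone $C(u)$ is the union of the chambers $v(D)$ for $v$ in the fiber $[u,\pi_c^\uparrow(u)]$, and since $C'\cap C$ is a full $(n-1)$-dimensional face lying on the reflecting hyperplane $H_t$, I can find $u'$ in the fiber of $w'$ and $u$ in the fiber of $w$ whose Coxeter chambers $u'(D)$ and $u(D)$ are adjacent across a facet contained in $H_t$. This adjacency writes as $u=u's$ for some simple reflection $s$, so the two roots associated to $t$ are $\pm u'(\alpha_s)$.

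Next, I must fix the sign of $u'(\alpha_s)$. Because $w$ covers $w'$ in the lattice of $c$-sortable elements and the projection $\pi^c_\downarrow$ is order-preserving, the pair $(u',u)$ can be chosen with $u'<u$ in the weak order, equivalently $\ell(u)=\ell(u')+1$. Consequently $u'(\alpha_s)\in\Phi^+$, and exactly one of the two roots associated to $t$ is positive.

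Finally, I compute the outer normal explicitly. The fundamental chamber $D$ satisfies $\langle x,\alpha_s\rangle\geq 0$ for every simple $s$, so its outer normal at the facet on $H_s$ is $-\alpha_s$; applying $u'$, the outer normal of $u'(D)$ at its facet on $H_t$ is $-u'(\alpha_s)$, a negative root. Since $u'(D)\subseteq C'$ and the supporting hyperplane of this facet of $u'(D)$ coincides with the supporting hyperplane of $C'$ at $C'\cap C$, the outer normal $\beta$ of $C'$ at $C'\cap C$ must be a positive scalar multiple of $-u'(\alpha_s)$; among the two roots associated with $t$ this forces $\beta=-u'(\alpha_s)\in\Phi^-$. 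The main obstacle is the second step: matching the cover direction in the $c$-sortable lattice with the weak-order direction on some adjacent pair of Coxeter chambers. For this I will appeal to Reading's description of Cambrian cover relations together with the order-preserving property of $\pi^c_\downarrow$; once in place, the remainder is a direct calculation with the $W$-action on $\Delta$.
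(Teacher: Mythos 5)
Your proposal is correct and takes essentially the same approach as the paper: pick a pair of adjacent Coxeter chambers, one inside each Cambrian cone, that share a facet in $H_t$, determine the weak-order direction of that adjacency, and read off the outer normal as $-u'(\alpha_s)$, which is a negative root because $\ell(u's)>\ell(u')$. The only minor variation is that the paper anchors one end of the pair at $w(D)$ itself (choosing $\widetilde w$ in the fiber of $w'$ covered by $w$ in weak order), while you take a general adjacent pair and deduce $u'<u$ from order-preservation of $\pi^c_\downarrow$.
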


\begin{proof}
    Let $\widetilde w \in (\pid)^{-1}(w^\prime)$ such that~$w$ is a cover 
    of~$\widetilde w$ in the right weak order. Then 
    $w(D) \cap \widetilde w(D)\subset H_t$ is an $(n-1)$-dimensional cone 
    of the Coxeter fan~$\F$. Since~$\beta$ is an outer normal for~$C(w')$ 
    relative to $C(w)\cap C(w')$, it is an outer normal for~$\widetilde w(D)$ 
    with respect to $\widetilde w(D)\cap w(D)$.

    \textbf {Case $\pmb 1$:}
    Suppose that $w'=\widetilde w=e$. Then~$w=s$ for some $s\in S$. The hyperplane
    dividing~$w(D)$ from~$D$ is~$H_s$, perpendicular to~$\alpha_s$. $D$ lies
    on the side of~$H_s$ having positive inner product with~$\alpha_s$. Thus
    the outer normal~$\beta=-\alpha_s$ is a negative root.

    \textbf {Case $\pmb 2$:}
    Suppose $\widetilde w\ne e$. Then $\widetilde w^{-1}w=s\in S$ since~$w$ 
    covers~$\widetilde w$ in right weak order. By the previous case, 
    $\widetilde w(-\alpha_s)$ is an outer normal. Since 
    $\ell(w)=\ell(\widetilde ws)>\ell(\widetilde w)$, we have that
    $\beta=\widetilde w(-\alpha_s)$ is a negative root, as desired.
\end{proof}

We have that $C\cap \R_{\geq 0}\mathcal R = \{ \rho_{u_1}, \ldots , \rho_{u_p}\}$ 
and since~$\F_c$ is simplicial, we may assume that the extremal rays of~$C$ are 
the first~$n:=|S|$ rays. Similarly, we may assume 
$\{ \rho_{u^\prime_1}, \rho_{u_2}, \ldots ,\rho_{u_n}\}$ are the extremal rays 
of~$C^\prime$. Hence, $H_t$ is spanned by $\{ u_2, \ldots , u_n\}$. As we have
$x(C):=\bigcap_{i=1}^{n}H^{\pmb a}_{\rho_{u_i}}$ and
$x(C)-x(C^\prime)\in \bigcap_{i=2}^{n}H^{\pmb a}_{\rho_{u_i}}$, we
conclude $x(C)-x(C^\prime) = \mu\beta$ for some $\mu \in \R$ and $\beta$
a negative root. Thus, $x(C)-x(C')$ is pointing to $C$ from $C'$ if and
only if $x(C)-x(C')=\mu\beta$ with $\mu>0$.

\begin{lem} \label{lem:equivalent_condition}
    Let $w, w^\prime \in W$ be $c$-sortable elements such that $w$ covers~$w^\prime$ in the lattice of
    $c$-sortable elements and $C(w)\cap C(w^\prime)\subset H_t$ is an $(n-1)$-dimensional cone of~$\F_c$ for a
    reflection~$t$. Let the extremal rays of~$C:=C(w)$ and~$C^\prime:=C(w^\prime)$ be generated
    by~$\{ u_1, \ldots , u_n\}$ and $\{ u_1^\prime, u_2, \ldots , u_n\}$ and suppose
    $u_1 + u_1^\prime = \sum_{i=2}^{n} b_iu_i\in H_t$ with $b_i \geq 0$.
    Then the following statements are equivalent:
    \begin{compactenum}[(i)]
       \item $x(C)-x(C')$ is pointing to $C$ from $C'$;
       \item $x(C)-x(C')=\mu\beta$ with $\beta\in\Phi^-$ and $\mu>0$;
       \item $\scalprod{x(C)-x(C^\prime)}{u_1}>0$;
       \item $\nu_{u_1} + \nu_{u_1^\prime} > \sum_{i=2}^{n}b_{i}\nu_i$.
    \end{compactenum}
\end{lem}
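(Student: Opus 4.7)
The plan is to verify the four equivalences largely by routine linear-algebraic computation, exploiting the defining property $\{x(C)\} = \bigcap_{i=1}^n H^{\pmb a}_{\rho_{u_i}}$ together with Lemma~\ref{lem:alpha_is_negative}. The equivalence (i) $\Leftrightarrow$ (ii) is essentially already established in the paragraph preceding the statement: by Lemma~\ref{lem:alpha_is_negative}, the outer normal to $C'$ relative to $C\cap C'$ is a negative root $\beta$, and since $x(C)-x(C')$ lies in $\bigcap_{i=2}^n H_{\rho_{u_i}}$ (the translate of $H_t$), it is a scalar multiple of $\beta$; pointing to $C$ from $C'$ just means the scalar is positive.

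For (ii) $\Leftrightarrow$ (iii), I would compute the sign of $\scalprod{\beta}{u_1}$. Because $\beta$ is outer for $C'$ relative to $C'\cap C$ and $u_1'$ is an extremal ray of $C'$ not lying in $H_t$, one has $\scalprod{\beta}{u_1'}<0$. The hypothesis $u_1+u_1' = \sum_{i=2}^n b_i u_i \in H_t$ then yields
\[
  \scalprod{\beta}{u_1} = -\scalprod{\beta}{u_1'} + \sum_{i=2}^n b_i \scalprod{\beta}{u_i} = -\scalprod{\beta}{u_1'} > 0,
\]
since $u_2,\dots,u_n$ span $H_t$. Writing $x(C)-x(C')=\mu\beta$, this gives $\scalprod{x(C)-x(C')}{u_1} = \mu\,\scalprod{\beta}{u_1}$, so the sign of $\mu$ matches the sign of the inner product, proving (ii) $\Leftrightarrow$ (iii).

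For (iii) $\Leftrightarrow$ (iv), I would directly evaluate both inner products. By definition of $x(C)$, $\scalprod{x(C)}{u_1} = \nu_{u_1}$, and similarly $\scalprod{x(C')}{u_i}=\nu_{u_i}$ for $i=2,\dots,n$ while $\scalprod{x(C')}{u_1'}=\nu_{u_1'}$. Using the relation $u_1 = -u_1' + \sum_{i=2}^n b_i u_i$, one computes
\[
  \scalprod{x(C')}{u_1} \;=\; -\nu_{u_1'} + \sum_{i=2}^n b_i\,\nu_{u_i},
\]
whence
\[
  \scalprod{x(C)-x(C')}{u_1} \;=\; \nu_{u_1}+\nu_{u_1'}-\sum_{i=2}^n b_i\,\nu_{u_i},
\]
and positivity of this expression is precisely (iv).

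The only subtle point, and the step I would be most careful about, is the strict sign $\scalprod{\beta}{u_1'}<0$ used in the second paragraph: one needs to know that $u_1'$ does not lie in $H_t$, which is true because $C'$ is a simplicial $n$-dimensional cone whose extremal rays other than $u_1'$ generate its facet in $H_t$. Once this is recorded, the coefficients $b_i\geq 0$ play no role in the sign argument and the chain of equivalences closes up cleanly.
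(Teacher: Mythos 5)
Your proposal is correct and follows essentially the same route as the paper: (i)$\Leftrightarrow$(ii) from Lemma~\ref{lem:alpha_is_negative} together with the fact that $x(C)-x(C')$ is normal to $H_t$, and (iii)$\Leftrightarrow$(iv) by exactly the paper's evaluation of $\scalprod{x(C)-x(C')}{u_1}$; your only deviation is cosmetic, deducing $\scalprod{\beta}{u_1}>0$ from $\scalprod{\beta}{u_1'}<0$ and $u_1+u_1'\in H_t$, where the paper instead uses that $u_1\in C$ lies on the side of $H_t$ opposite to $C'$. One small slip of wording: the set containing $x(C)-x(C')$, namely $\bigcap_{i=2}^{n}\{x \,:\, \scalprod{x}{u_i}=0\}$, is the normal line to $H_t$ rather than a translate of $H_t$, but the conclusion you draw from it (that the difference is a scalar multiple of $\beta$) is exactly right.
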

\begin{proof}
   The first equivalence follows from Lemma~\ref{lem:alpha_is_negative} and the preceding discussion.

   As $u_1\in C$ and~$C$ is spanned by the vectors in $C\cap C'$ together with~$u_1$, we 
   have $\scalprod{\beta}{u_1}>0$ if and only if~$\beta$ is an outer normal of~$C'$ relative
   to $C\cap C'$. This shows the second equivalence.

   The last equivalence follows from
   \[
     \scalprod{x(C)-x(C^\prime)}{u_1} = \nu_{u_1} - \scalprod{x(C^\prime)}{-u_1^\prime + \sum_{i=2}^{n}b_iu_i}
                                      = \nu_{u_1} + \nu_{u_1^\prime} - \sum_{i=2}^{n}b_i\nu_{u_i}.
   \]
\end{proof}

\medskip $ $\\
\medskip
\noindent 
We apply Theorem~\ref{thm:general_result} to conclude:
\begin{quote}
    If one of the equivalences in Lemma~\ref{lem:equivalent_condition} is achieved for all pairs of
    adjacent cones in $\F_c$, then~$\F_c$ is the outer normal fan of~$\Ass^{\pmb a}_c(W)$ which proves 
    Theorem~\ref{thm:main_theorem}.
\end{quote}
This will be done in  Lemma~\ref{lem:final_lemma} below.

\medskip
We first consider the special case that $e$ is covered by $s\in S$ and there is a reduced expression 
for~$c$ that starts with~$s$, that is~$s$ is \emph{initial} in~$c$.

\begin{lem}\label{lem:s_initial_for_c}
    Let $s\in S$ be initial in~$c$.
    Then $C(e) \cap C(s) \subseteq H_s$, $u_1^\prime = v_s$, $u_1 = s(v_s)$, and
    \[
      u_1^\prime + u_1 = \sum_{r\neq s}b_ru_r \in H_s
    \]
    with $b_r = -2\tfrac{\scalprod{\alpha_s}{\alpha_r}}{\scalprod{\alpha_s}{\alpha_s}}\geq 0$. Moreover,
    $\nu_{u_1^\prime} + \nu_{u_1} > \sum_{r\neq s}b_r\nu_{u_r}$.
\end{lem}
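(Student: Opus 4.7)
The strategy is to reduce everything to the local picture of the two adjacent Coxeter chambers $D$ and $s(D)$, and then perform a short linear-algebra computation using the reflection formula. I would begin by invoking Theorem~\ref{thm:cSingleton}: since $s$ is initial in~$c$, both $e$ and $s$ are prefixes of $\ww$ up to commutations, hence $c$-singletons. Therefore $C(e)=D$ and $C(s)=s(D)$, and their common codimension-one face lies in $H_s$, giving the first claim.

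Next I would identify the extremal rays. The cone $D$ is spanned by the fundamental weights $\{v_r\mid r\in S\}$, and since $\{v_r\}$ and $\{\alpha_r\}$ are dual bases, one has $s(v_r)=v_r$ for $r\neq s$ and
\[
s(v_s)=v_s-\frac{2}{\langle\alpha_s,\alpha_s\rangle}\alpha_s.
\]
So the shared rays of $C(e)$ and $C(s)$ are $\{v_r\mid r\neq s\}$, and we take $u_r=v_r$ for $r\neq s$, $u_1'=v_s$, and $u_1=s(v_s)$.

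The main computation is to expand $\alpha_s=\sum_{r\in S}\langle\alpha_s,\alpha_r\rangle v_r$, again using the duality $\langle v_r,\alpha_t\rangle=\delta_{rt}$, and to substitute this into the formula above; the two $v_s$ contributions cancel and yield $u_1+u_1'=\sum_{r\neq s}b_r\,v_r$ with the claimed $b_r$. Nonnegativity of the $b_r$ is the standard fact that $\langle\alpha_r,\alpha_s\rangle\leq 0$ for distinct simple roots, and membership in $H_s$ is immediate from $\langle v_s+s(v_s),\alpha_s\rangle=1-1=0$. For the strict inequality I would use that $\nu_{u_1'}=\nu_{u_1}=\langle\pmb a,v_s\rangle$ (both rays have the index $s$) while $\nu_{u_r}=\langle\pmb a,v_r\rangle$ for $r\neq s$; pairing the identity $u_1+u_1'=\sum_{r\neq s}b_r u_r$ with $\pmb a$, one sees that the desired inequality collapses to
\[
\langle \pmb a,\, v_s-s(v_s)\rangle = \frac{2}{\langle\alpha_s,\alpha_s\rangle}\,\langle\pmb a,\alpha_s\rangle>0,
\]
which holds precisely because $\pmb a$ lies in the interior of the fundamental chamber.

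There is no serious obstacle: the entire argument is local in the Coxeter fan, and the only thing to watch is carefully distinguishing the duality $\langle v_r,\alpha_t\rangle=\delta_{rt}$ from the (non-orthogonal) inner products $\langle\alpha_r,\alpha_t\rangle$. Once this bookkeeping is in place, every claim of the lemma follows from a single application of the reflection formula together with the defining positivity $\langle\pmb a,\alpha_s\rangle>0$ of the fundamental chamber.
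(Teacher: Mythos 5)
Your proof is correct and takes essentially the same route as the paper: both identify $C(e)=D$ and $C(s)=s(D)$ as adjacent chambers of the Coxeter fan (using that $e$ and $s$ are $c$-singletons), compute $s(v_s)$ by the reflection formula together with the expansion $\alpha_s=\sum_{r}\langle\alpha_s,\alpha_r\rangle v_r$, and derive the strict inequality from the positivity $\langle\pmb a,\alpha_s\rangle>0$. The paper phrases that final step as "$\pmb a$ is a vertex of $\Perm^{\pmb a}(W)$," but this is equivalent to your observation that $\pmb a$ lies in the interior of the fundamental chamber.
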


\begin{proof}
    Without loss of generality, assume $S=\{ s_1 , \ldots , s_n\}$ and $s = s_1$. Since~$s\in S$ is initial,
    $s$ is a $c$-singleton. The maximal cones $C(e)$ and $C(s)$ of~$\F_c$ are therefore maximal cones of the
    Coxeter fan with extremal rays generated by $\{ v_{s_1}, \ldots, v_{s_n}\}$
    and $\{ s(v_{s_1}), v_{s_2}, \ldots , v_{s_n}\}$. Since
    $\alpha_r = \sum_{i=1}^{n}\scalprod{\alpha_r}{\alpha_i}v_i$,
    we have
    \[
      s_1(v_{s_1})
          = v_{s_1}
            - 2\tfrac{ \scalprod{\alpha_{s_1}}{v_{s_1}} }
                     { \scalprod{\alpha_{s_1}} {\alpha_{s_1}} } \alpha_{s_1}
          = -v_{s_1}
            + \sum_{i=2}^n
                  \left(
                      -2\tfrac{ \scalprod{\alpha_{s_1}}{\alpha_{s_i}} }{ \scalprod{\alpha_{s_1}}{\alpha_{s_1}} }
                  \right)
                  v_{s_i}.
    \]
    In particular, $s(v_{s_1}) + v_{s_1} \in H_{s_1}$ and $\scalprod{\alpha_{s_1}}{\alpha_{s_i}}\leq 0$ for
    $s_1 \neq s_i$.
    As $\pmb a$ is a vertex of~$Perm^{\pmb a}(W)$, we conclude
\[    \nu_{s_1(u_1)} = \scalprod{\pmb a}{v_{s_1}}
                     > \scalprod{\pmb a}{s_1(v_{s_1})}
                     = -\nu_{s_1(v_{s_1})}+\sum_{i=2}^nb_i\nu_{s_i}
\]
\end{proof}

Some terminology and results due to N.~Reading and D.~Speyer are needed to prove Lemma~\ref{lem:final_lemma}
(and therefore to finish the proof of Theorem~\ref{thm:main_theorem}). To distinguish objects related to a
Cambrian fan with respect to different Coxeter elements, we use the Coxeter element as index. For example,
if we use the Coxeter element~$scs$ instead of~$c$, then~$C_{scs}(w)$ denotes the maximal cone that
corresponds to the $scs$-sortable element~$w$. If $s\in S$ is initial in~$c$ then $\F_{sc}$ is the
$sc$-Cambrian fan for the Coxeter element $sc$ of~$W_{\la s\ra}$.

\begin{lem}[{\cite[Lemmas~4.1,~4.2]{reading4}}] \label{lem:w_c_sortable}
   Let~$c$ be a Coxeter element and~$s$ initial in~$c$.
   \begin{compactenum}[(i)]
      \item Let~$w\in W$ such that $\ell(sw)<\ell(w)$. Then~$w$ is $c$-sortable if and only
            if~$sw$ is $scs$-sortable.
      \item Let~$w\in W$ such that $\ell(sw)>\ell(w)$. Then~$w$ is $c$-sortable if and only if
            $w\in W_{\la s\ra}$ and~$w$ is $sc$-sortable.
   \end{compactenum}
\end{lem}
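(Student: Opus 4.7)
Since $s$ is initial in $c$, we may choose a reduced expression $c = s\,\tilde c$ where $\tilde c$ is a reduced word in the generators $S\setminus\{s\}$ (each generator occurs exactly once in~$c$). Then $sc=\tilde c$ is a Coxeter element of $W_{\langle s\rangle}$ and $scs=\tilde c\,s$ is a Coxeter element of $W$ in which $s$ appears last. The relevant periodic words unfold as
\[
  c^{\infty}=s\,\tilde c\,s\,\tilde c\,s\,\tilde c\,\dots,\qquad
  (scs)^{\infty}=\tilde c\,s\,\tilde c\,s\,\dots,\qquad
  (sc)^{\infty}=\tilde c\,\tilde c\,\tilde c\,\dots,
\]
and the plan is to transfer the lex-first-subword description of the sorting word between these three presentations.

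For part~(i), assume $s\in D(w)$. The plan is first to show that the $c$-sorting word of~$w$ must begin with the leading $s$ of $c^{\infty}$: if it did not, then no reduced expression for~$w$ would start with~$s$ in the given lex ordering on positions, contradicting $s\in D(w)$. Removing this leading $s$ leaves a reduced subword of $\tilde c\,s\,\tilde c\,s\,\dots=(scs)^{\infty}$ representing~$sw$, and I would check that it is again lex-first (any earlier choice would contradict minimality for~$w$), hence it is the $scs$-sorting word of $sw$. Conversely, prepending $s$ to the $scs$-sorting word of any $u$ with $\ell(su)>\ell(u)$ yields a reduced subword of $c^{\infty}$ that must likewise be lex-first. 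The remaining task is to translate nestedness: if the $c$-factorization of~$w$ is $c_{(K_1)}c_{(K_2)}\cdots c_{(K_p)}$ with $s\in K_1$, then a little bookkeeping shows the $scs$-factorization of~$sw$ is $(scs)_{(L_1)}(scs)_{(L_2)}\cdots$ where the $L_j$ are obtained by regrouping the letters of $c_{(K_1\setminus\{s\})}\,c_{(K_2)}\cdots c_{(K_p)}$ according to the $\tilde c\,s$-pattern; a direct induction on~$p$, tracking whether or not $s\in K_{i+1}$, shows $K_1\supseteq K_2\supseteq\cdots$ if and only if $L_1\supseteq L_2\supseteq\cdots$.

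For part~(ii), assume $\ell(sw)>\ell(w)$, so $s\notin D(w)$. Then no reduced expression of~$w$ begins with~$s$, so the $c$-sorting word cannot use the leading $s$ of $c^{\infty}$. I would argue that if~$w$ is $c$-sortable with $c$-factorization $c_{(K_1)}c_{(K_2)}\cdots c_{(K_p)}$, nestedness $K_1\supseteq K_2\supseteq\cdots$ and $s\notin K_1$ force $s\notin K_i$ for every~$i$; hence~$w$ uses only generators in $S\setminus\{s\}$, i.e.\ $w\in W_{\langle s\rangle}$. Deleting the unused $s$-positions from $c^{\infty}$ produces $\tilde c^{\infty}=(sc)^{\infty}$, and under this identification the $c$-sorting word of~$w$ is exactly the $sc$-sorting word of~$w$ (viewed in $W_{\langle s\rangle}$), with identical $K_i$; in particular nestedness is preserved, giving $sc$-sortability. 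Conversely, every $sc$-sortable $w\in W_{\langle s\rangle}$ lifts to a lex-first reduced subword of $c^{\infty}$ (skipping the initial~$s$ of every period), and its $c$-factorization inherits the same nested sequence of subsets.

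The main obstacle, in both parts, is verifying that the rewriting of reduced subwords of one periodic word as reduced subwords of another actually preserves the \emph{lex-first} property that characterizes sorting words, rather than merely producing some reduced subword; this is where a careful induction on $\ell(w)$ combined with an appeal to the exchange condition, in the style of \cite{reading2}, is needed. Once that is established, the translation of the nestedness condition $K_1\supseteq K_2\supseteq\cdots$ is essentially bookkeeping.
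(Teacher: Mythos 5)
This lemma is not proved in the paper at all: it is quoted verbatim from Reading--Speyer (\cite[Lemmas~4.1, 4.2]{reading4}, whose proofs live in Reading's paper on sortable elements), so there is no internal argument to compare against. Your sketch follows exactly the standard argument behind the cited result: view the $c$-sorting word as the lexicographically first reduced subword of $c^\infty$, observe that deleting the forced leading $s$ identifies it with a subword of $(scs)^\infty$ (resp.\ that avoiding all $s$-positions identifies it with a subword of $(sc)^\infty$ inside $W_{\la s\ra}$), and then translate the nestedness condition. The combinatorial translation you gesture at is correct: with $L_i=(K_i\setminus\{s\})\cup(K_{i+1}\cap\{s\})$ one checks directly that $K_1\supseteq K_2\supseteq\cdots$ (together with $s\in K_1$) is equivalent to $L_1\supseteq L_2\supseteq\cdots$, and in part~(ii) the nestedness forces $s\notin K_i$ for all $i$, giving $w\in W_{\la s\ra}$ and an identical factorization as an $sc$-sorting word.

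Two small points to tighten. First, your justification that the $c$-sorting word of $w$ must use the leading $s$ when $\ell(sw)<\ell(w)$ is garbled: the correct reason is that some reduced word for $w$ begins with $s$, and any such word embeds as a reduced subword of $c^\infty$ using position~$1$, which is lexicographically earlier than any subword avoiding position~$1$; hence the lex-first subword uses it. (Similarly, the converse directions need the easy remark that prepending position~$1$, or deleting the unused $s$-positions, is an order-preserving bijection on position sequences, so lex-firstness transfers both ways; you flag this as the ``main obstacle'' but it is a one-line observation, not an induction.) Second, a notational slip: in this paper $D(w)$ denotes the \emph{right} descent set, whereas the hypotheses $\ell(sw)\lessgtr\ell(w)$ concern $s$ as a \emph{left} descent, so ``$s\in D(w)$'' and ``$s\notin D(w)$'' should be rephrased. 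With these repairs your sketch is a complete and correct proof of the quoted lemma.
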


Note that $\ell(sw)<\ell(w)$ if and only if the chamber~$w(D)$ of the Coxeter arrangement 
corresponding to~$w$ lies above~$H_s$. In this case, the maximal cone~$C(w)$ of~$\F_c$ is 
above~$H_s$ because~$w$ is minimal in its 
fibre~$(\pi^c_\downarrow)^{-1}\pi^c_\downarrow(w)=[\pi^c_\downarrow(w),\pi^\uparrow_c(w)]$
for the $c$-Cambrian congruence. On the other hand, if $\ell(sw)>\ell(w)$, then~$w(D)$ is 
below~$H_s$ in the Coxeter arrangement. In this case, we know that the maximum element of 
the fibre~$[\pi^c_\downarrow(w),\pi^\uparrow_c(w)]$ for~$w$, and thus all of~$C(w)$, is 
below~$H_s$, by~\cite[Lemma 4.11]{reading4}. It follows that the hyperplane~$H_s$ separates 
the cones of~$\F$ into two families and it never intersects a maximal cone of~$\F_c$ in its 
interior. For~$\rho\in\F^{(1)}_c$ we define
\[
  \zeta_s(\rho):=\begin{cases}
                     s(\rho) & \text{if $\rho\ne \rho_s$},\\
                     -\rho_s & \text{otherwise}.
                 \end{cases}
\]
We abuse notation and consider~$\zeta_s$ also as a map on the set of
vectors generating the rays~$\F_c^{(1)}$. The following lemma is a
consequence of~\cite[Lemma 6.5]{reading4} and~\cite[Theorem~1.1]{reading4}. 
Compare also the comments after~\cite[Corollary~7.3]{reading4}, from which 
the last statement is taken.

\begin{lem}[{\cite{reading4}}]\label{lem:six}
   Let~$s\in S$ be initial in the Coxeter element~$c$. If $\rho_1,\dots,\rho_n$ are the extremal
   rays of the maximal cone~$C(w)\in\F_c$ then $\zeta_s(\rho_1),\dots,\zeta_s(\rho_n)$ are the
   extremal rays of a maximal cone of~$\F_{scs}$.  If~$\ell(sw)<\ell(w)$, then these extremal rays
   are the extremal rays of the maximal cone~$C(sw)$ that corresponds to the $scs$-sortable
   element~$sw$.
\end{lem}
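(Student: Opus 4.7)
The plan is to translate the geometric statement into a combinatorial one using the bijective labeling $f_c\colon\F_c^{(1)}\to\Phi_\apr$ of rays by almost positive roots (Theorem~\ref{thm:ReadSpeyer}), together with the explicit formula $f_c(w(\rho_r))=\lr_r(w)$ for $w$ a $c$-singleton (Theorem~\ref{thm:Explif_c}). On labels, $\zeta_s$ corresponds to the involution sending $\alpha_s$ to $-\alpha_s$ and $\beta$ to $s(\beta)$ otherwise, since $s(w(\rho_r))=(sw)(\rho_r)$ as rays. The lemma then reduces to the combinatorial identity $\zeta_s(\cl_c(w))=\cl_{scs}(w')$ for an appropriate $scs$-sortable $w'$, with $w'=sw$ whenever $\ell(sw)<\ell(w)$.

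For the stronger (second) claim, assume $\ell(sw)<\ell(w)$. By Lemma~\ref{lem:w_c_sortable}(i), $sw$ is $scs$-sortable. Since $s$ is initial in $c$ and $s\in D(w)$, the $c$-sorting word of $w$ begins with $s$; writing it as $s\cdot s_{i_2}\cdots s_{i_N}$, a direct check shows that the tail is the $scs$-sorting word of $sw$. I would then compare $\lr_r(w)$ and $\lr_r(sw)$ for each $r\in S$ in four cases: $r=s$ with rightmost $s$ at position $1$ (giving $\lr_s(w)=\alpha_s$ and $\lr_s(sw)=-\alpha_s=\zeta_s(\alpha_s)$); $r=s$ with rightmost $s$ further right (giving $\lr_s(sw)=s\cdot\lr_s(w)$); $r\ne s$ with $r\in S(w)$ (the initial $s$ is stripped from the prefix defining $\lr_r(w)$, so $\lr_r(sw)=s\cdot\lr_r(w)$); and $r\notin S(w)$ (both labels are $-\alpha_r$, fixed by $\zeta_s$ since $r\ne s$). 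Together these cases give $\zeta_s(\cl_c(w))=\cl_{scs}(sw)$.

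For the first claim in the remaining case $\ell(sw)>\ell(w)$, Lemma~\ref{lem:w_c_sortable}(ii) gives $w\in W_{\la s\ra}$, and $w$ is $sc$-sortable. I would appeal to \cite[Lemma~4.11]{reading4} (cited in the paragraph preceding the lemma), which implies that each maximal cone of $\F_c$ lies entirely on one side of $H_s$: above-$H_s$ cones correspond to the $w$ handled by paragraph~2, while below-$H_s$ cones, parametrized by $sc$-sortable elements of $W_{\la s\ra}$, coincide with the corresponding below-$H_s$ cones of $\F_{scs}$ (since both fans restrict to the same fan in the half-space below $H_s$, being determined by the Cambrian congruence of $sc$ in the parabolic subgroup). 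On these shared cones, $\rho_s$ is not an extremal ray and $\zeta_s$ acts as $s$ on the remaining rays; one then checks that $s$ fixes each such cone as an unordered set of rays.

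The main obstacle I foresee is the case analysis in paragraph~2 when $r$ and $s$ braid versus commute: commutations within the $c$-sorting word can move $s$ past commuting letters and shift the prefix defining $\lr_r(w)=s_1\cdots s_{j_r-1}(\alpha_r)$, and one must verify that the end result is insensitive to these moves. This is guaranteed in principle by the well-definedness of $c$-sorting words \cite[\S2]{reading2}, but the explicit bookkeeping is delicate. The reduction via \cite[Lemma~4.11]{reading4} in paragraph~3 bypasses some of this at the cost of invoking a nontrivial external result; a fully self-contained argument would replace it by an induction on $\ell(w)$ along the covering relations of the $c$-sortable lattice, using Proposition~\ref{prop:Prefixes} and Lemma~\ref{lem:CSing} to track how $\cl_c$ changes under a cover.
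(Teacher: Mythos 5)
First, note that the paper does not prove this lemma at all: it is quoted from Reading and Speyer, with the precise references given just before the statement (\cite[Lemma~6.5, Theorem~1.1]{reading4} and the remarks after \cite[Corollary~7.3]{reading4}). So there is no internal proof to compare with, and your attempt has to be judged on its own; as written it has a genuine gap, most visibly in your third paragraph. For $\ell(sw)>\ell(w)$ you assert that the below-$H_s$ cones of $\F_c$ and $\F_{scs}$ coincide, that $\rho_s$ is not an extremal ray of such a cone, and that $\zeta_s$ fixes each such cone setwise. All three claims fail. By the paper's Lemma~\ref{lem:cones}, \emph{every} maximal cone of $\F_c$ below $H_s$ is spanned by $\rho_s$ together with a cone in $H_s$, so $\rho_s$ is always an extremal ray there, and $\zeta_s$ sends it to $-\rho_s$, i.e.\ the image cone lies \emph{above} $H_s$, it is not the same cone. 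Moreover $s$ is final, not initial, in $scs$, so Lemmas~\ref{lem:w_c_sortable} and~\ref{lem:cones} say nothing about $\F_{scs}$ below $H_s$, and the two restrictions genuinely differ: for $W=S_3$, $c=s_1s_2$, $s=s_1$, the cone $C_c(s_2)=s_2(D)\cup s_2s_1(D)$ is a single below-$H_{s_1}$ cone of $\F_c$, whereas $s_2(D)$ and $s_2s_1(D)$ are distinct maximal cones of $\F_{s_2s_1}$; here $\zeta_{s_1}$ carries the extremal rays of $C_c(s_2)$ to those of $w_0(D)$, an above-$H_{s_1}$ cone of $\F_{scs}$.

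There is also a subtler gap in your main case. Establishing the set identity $\cl_{scs}(sw)=\sigma(\cl_c(w))$ (with $\sigma$ the involution on almost positive roots fixing $-\a_r$ for $r\neq s$ -- your opening description, ``$\beta\mapsto s(\beta)$ otherwise'', is already wrong on these labels) compares the two cones only through the two \emph{separate} bijections $f_c$ and $f_{scs}$ of Theorem~\ref{thm:ReadSpeyer}; it does not identify the geometric rays unless you also prove the compatibility $f_{scs}(\zeta_s(\rho))=\sigma(f_c(\rho))$, which is essentially the statement being proved and is exactly what Reading and Speyer supply. Equivalently, one needs the geometric fact that the $scs$-Cambrian congruence class of $sw$ is $s$ times the $c$-class of $w$, so that $C_{scs}(sw)=s(C(w))$; your sketch never establishes this. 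Finally, Theorem~\ref{thm:Explif_c} gives $f_c(w(\rho_r))=\lr_r(w)$ only for $c$-singletons $w$, so your case analysis, carried out for an arbitrary $c$-sortable $w$ (and implicitly requiring $sw$ to be an $scs$-singleton on the other side), cannot invoke it as stated. The sorting-word observation you make (the $c$-sorting word of $w$ is $s$ followed by the $scs$-sorting word of $sw$ when $\ell(sw)<\ell(w)$) is correct and useful, but by itself it does not close these gaps.
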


Before we finish the proof of Theorem~\ref{thm:main_theorem} with Lemma~\ref{lem:final_lemma} we make
an observation that will be useful also in Section~\ref{subsec:integer_coords}.

\begin{lem}\label{lem:cones}
   Let $c$ be a Coxeter element, $s\in S$ initial in~$c$, and $w\in W_{\la s \ra}$
   $sc$-sortable. Then the maximal cone~$C(w)\in \F_c$ is spanned
   by~$C_{sc}(w) \in \F_{sc}$ and the ray~$\rho_s\in \F_{c}$.
\end{lem}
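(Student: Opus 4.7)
The plan is to identify the extremal rays of $C(w)\in\F_c$ and of $C_{sc}(w)\in\F_{sc}$ and to verify that, under the identification of rays via their common labels by almost positive roots (Theorem~\ref{thm:ReadSpeyer}), the extremal rays of $C(w)$ consist of $\rho_s$ together with those of $C_{sc}(w)$. Since both $\F_c$ and $\F_{sc}$ are simplicial, their maximal cones are pinned down by their extremal rays, so this comparison will suffice.

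By Lemma~\ref{lem:w_c_sortable}(ii), the hypothesis $w\in W_{\la s\ra}$ gives $\ell(sw)>\ell(w)$ (since no reduced expression for $w$ uses $s$), so $w$ is $c$-sortable and $C(w)\in\F_c$ is defined. The heart of the argument is the identity
\[
   \cl_c(w)=\{-\alpha_s\}\cup\cl_{sc}(w)
\]
in $\Phi_{\geq -1}$, using the natural inclusion $(\Phi_{\la s\ra})_{\geq -1}\subseteq \Phi_{\geq -1}$. Since $s\notin S(w)$, we have $\lr_s(w)=-\alpha_s$, so it remains to show $\lr_t(w)=\lr^{sc}_t(w)$ for every $t\in\la s\ra$.

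For this, I would establish that the $c$-sorting word of $w$ coincides with its $sc$-sorting word. Since $s$ is initial in~$c$, we may write $c=s\cdot(sc)$ with $sc\in W_{\la s\ra}$, so that $c^\infty$ interleaves the letter~$s$ with reduced expressions for $sc$, and the subword at the $\la s\ra$-positions of $c^\infty$ spells $(sc)^\infty$. Because no reduced expression for $w$ uses $s$, every reduced subword of $c^\infty$ for $w$ avoids the $s$-positions, so the lex-first such subword is precisely the lex-first reduced subword of $(sc)^\infty$, which is the $sc$-sorting word of~$w$. Consequently the positions $j_t$ used in the definition of $\lr_t$ coincide for $t\in\la s\ra$, giving $\lr_t(w)=\lr^{sc}_t(w)$ and hence the displayed identity.

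Finally, applying Theorem~\ref{thm:ReadSpeyer} to both $(W,c)$ and $(W_{\la s\ra},sc)$ labels the extremal rays of $C(w)$ by $\cl_c(w)$ and those of $C_{sc}(w)$ by $\cl_{sc}(w)$. Theorem~\ref{thm:Explif_c} gives $f_c(\rho_s)=\lr_s(e)=-\alpha_s$, so the label $-\alpha_s$ is carried by $\rho_s$ itself. Identifying a ray of $\F_{sc}$ with the ray of $\F_c$ sharing its label in $(\Phi_{\la s\ra})_{\geq -1}$, the identity above translates into: the extremal rays of $C(w)$ are $\rho_s$ together with the extremal rays of $C_{sc}(w)$, which is the conclusion of the lemma. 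The main obstacle is proving the equality of the $c$-sorting and $sc$-sorting words; once that is in hand, everything else reduces to bookkeeping with the cluster map and the ray labellings of Theorems~\ref{thm:ReadSpeyer} and~\ref{thm:Explif_c}.
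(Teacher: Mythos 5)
Your combinatorial claim $\cl_c(w)=\{-\alpha_s\}\cup\cl_{sc}(w)$ is correct, and the argument for it (since $w\in W_{\la s\ra}$, no reduced subword of $c^\infty$ for~$w$ touches an $s$-position, and the non-$s$-positions of $c^\infty$ spell $(sc)^\infty$ under an order-preserving reindexing, so the $c$- and $sc$-sorting words of~$w$ coincide) is clean. But the final step, ``Identifying a ray of~$\F_{sc}$ with the ray of~$\F_c$ sharing its label in $(\Phi_{\la s\ra})_{\geq -1}$,'' is precisely the geometric content of the lemma and is not proven. Theorem~\ref{thm:ReadSpeyer} gives you two \emph{separate} bijections $f_c\colon\F_c^{(1)}\to\Phi_{\geq -1}$ and $f_{sc}\colon\F_{sc}^{(1)}\to(\Phi_{\la s\ra})_{\geq -1}$, defined independently from the two Cambrian fans. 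That a ray of~$\F_c$ and a ray of~$\F_{sc}$ carrying the same almost positive root are literally the same cone in~$V$ is exactly what must be shown; you cannot get it for free from matching labels. So your proof establishes a combinatorial isomorphism between the two cones, not the asserted geometric identity $C(w)=\mathrm{cone}(\rho_s,\,C_{sc}(w))$.

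The paper sidesteps this by working purely geometrically with hyperplane sides: since $\ell(sw)>\ell(w)$, the cone $C(w)$ lies (weakly) below $H_s$, and by Reading--Speyer's Lemma~6.3 the ray $\rho_s$ is the unique ray of $\F_c$ strictly below $H_s$, so $C(w)$ is the cone over its slice $E(w):=C(w)\cap H_s$ with apex direction $\rho_s$. Then one identifies $E(w)$ with $C_{sc}(w)$ by comparing inversion sets: every reflecting hyperplane that $C(w)$ is above corresponds to an inversion of~$w$, and since $w\in W_{\la s\ra}$ these all belong to $W_{\la s\ra}$, so $E(w)$ and $C_{sc}(w)$ lie on the same side of every hyperplane of $\A_{\la s\ra}$ and therefore coincide. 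If you want to salvage the cluster-map route, you would need to first prove the compatibility $f_c^{-1}(\beta)=f_{sc}^{-1}(\beta)$ for $\beta\in(\Phi_{\la s\ra})_{\geq -1}$, e.g.\ by induction via Theorem~\ref{thm:Explif_c} together with Lemma~\ref{lem:six}; at that point you would effectively be reproducing the geometric argument.
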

\begin{proof}
   The ray~$\rho_s$ is the unique ray of~$\F_c$ that is strictly below~$H_s$ by~\cite[Lemma 6.3]{reading4}.
   From Lemma~\ref{lem:w_c_sortable} it follows that~$C(w)$ is below~$H_s$ and has~$\rho_s$ as an extremal ray.
   Hence~$C(w)$ is spanned by~$\rho_s$ and a maximal cone~$E(w):=C(w)\cap H_s\in\A_{\la s \ra}$.

   Now consider all inversions~$t$ of~$w$, that is, all reflections~$H_t\in~\A$ such that~$C(w)$
   is above~$H_t$. Since $w\in W_{\la s \ra}$ we conclude~$t \in W_{\la s \ra}$. Hence, the
   inversions of~$E(w)$ and~$C_{sc}(w)$ coincide and~$E(w)=C_{sc}(w)$.
\end{proof}

\begin{lem}\label{lem:final_lemma}
    Let $w, w^\prime \in W$ be $c$-sortable elements such that $w$ covers~$w^\prime$ in the lattice of
    $c$-sortable elements and $C(w)\cap C(w^\prime)\subset H_t$ is an $(n-1)$-dimensional cone of~$\F_c$
    for a reflection~$t$. Let the extremal rays of~$C:=C(w)$ and~$C^\prime:=C(w^\prime)$ be generated
    by~$\{ u_1, \ldots , u_n\}$ and $\{ u_1^\prime, u_2, \ldots , u_n\}$.

    Then $u_1^\prime + u_1 = \sum_{i=2}^n b_iu_i$ with $b_i \geq 0$ and
    $\nu_{u_1^\prime} + \nu_{u_1} > \sum_{i=2}^n b_i\nu_{u_i}$.
\end{lem}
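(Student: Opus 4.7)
The plan is to argue by double induction on $(n, \ell(w))$ ordered lexicographically, where $n := |S|$; the base case is supplied by Lemma~\ref{lem:s_initial_for_c}. For the inductive step I fix a simple reflection $s \in S$ that is initial in the chosen reduced expression for $c$, so that $c = s \cdot (sc)$ with $sc$ a Coxeter element of $W_{\la s\ra}$. Since no maximal cone of $\F_c$ is cut by $H_s$ (see \cite[Lemma~4.11]{reading4} and the discussion preceding Lemma~\ref{lem:cones}), the adjacent cones $C(w)$ and $C(w')$ must both lie above $H_s$, both below $H_s$, or one on each side, and I would treat these three cases separately.

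If both $C(w)$ and $C(w')$ lie above $H_s$, I would apply Lemma~\ref{lem:w_c_sortable}(i) to deduce that $sw$ and $sw'$ are $scs$-sortable with $\ell(sw) = \ell(w)-1$, and that the cover relation descends to the $scs$-Cambrian lattice. Because no extremal ray equals the unique below-$H_s$ ray $\rho_s$, Lemma~\ref{lem:six} identifies the extremal rays of $C_{scs}(sw)$ and $C_{scs}(sw')$ as the $s$-images of $u_1, u_1', u_2, \ldots, u_n$. The inner induction on length applied to this pair in $\F_{scs}$ yields $s(u_1') + s(u_1) = \sum_{i=2}^n b_i s(u_i)$ with the analogous $\nu$-inequality; applying $s$ back, and observing that $\nu_{s(v)} = \nu_v$ (since $\nu_v$ depends only on the $W$-orbit class of $v$), recovers the statement for $(u_1, u_1', u_2, \ldots, u_n)$.

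If both $C(w)$ and $C(w')$ lie below $H_s$, Lemma~\ref{lem:w_c_sortable}(ii) places $w$ and $w'$ in $W_{\la s\ra}$ as $sc$-sortable elements, and Lemma~\ref{lem:cones} shows $\rho_s$ is a common extremal ray; accordingly I would set $u_n = v_s$. The remaining rays lie in $H_s$ and generate cones $E(w)$ and $E(w')$ in the rank-$(n-1)$ Cambrian fan $\F_{sc}$, with $E(w)$ covering $E(w')$ there. The outer induction on rank applied in $\F_{sc}$ then produces a decomposition $u_1' + u_1 = \sum_{i=2}^{n-1} b_i u_i$ satisfying the $\nu$-inequality, and setting $b_n := 0$ lifts the conclusion back to $\F_c$.

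The mixed case, with $C(w)$ above and $C(w')$ below $H_s$, is the hard part: no reduction to a smaller instance is available. Here the shared facet necessarily lies in $H_s$, forcing $u_1' = v_s$ (the unique extremal ray strictly below $H_s$), while $u_2, \ldots, u_n$ span $E(w') = C(w') \cap H_s$ identified with $C_{sc}(w') \in \F_{sc}$. I would handle this case directly, paralleling Lemma~\ref{lem:s_initial_for_c}: first establish the normalization $\langle u_1, \alpha_s\rangle = -1$ so that $u_1 + v_s \in H_s = \mathrm{span}(u_2, \ldots, u_n)$; then show that the unique expansion $u_1 + v_s = \sum_{i=2}^n b_i u_i$ has non-negative coefficients by verifying that the orthogonal projection of $u_1$ onto $H_s$ along $\alpha_s$ lies inside the cone $E(w')$, which follows from the adjacency of $C(w)$ and $C(w')$ across $E(w')$; and finally verify the $\nu$-inequality by the same half-space argument as in Lemma~\ref{lem:s_initial_for_c}, exploiting the strict interiority $\langle \pmb a, \alpha_s\rangle > 0$ of $\pmb a$ in $D$ to force $\langle \pmb a, u_1\rangle < \langle \pmb a, v_s\rangle$. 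The principal obstacle is thus verifying the normalization $\langle u_1, \alpha_s\rangle = -1$ and the cone-containment of the projection, as the non-negativity of the $b_i$ in Case 3 is the most delicate point.
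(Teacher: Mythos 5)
Your Cases 1 and 2 match the paper's proof exactly (induction on length via $\zeta_s$-conjugation into $\F_{scs}$, and induction on rank via restriction to $W_{\la s\ra}$, respectively), and the base-case handling is also the same. The divergence and the gap are both in Case 3.

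You yourself flag that in Case 3 you would need to verify (a) the normalization $\scalprod{u_1}{\alpha_s}=-1$, (b) that the projection of $u_1$ onto $H_s$ along $\alpha_s$ lies in $E(w')$, and (c) the $\nu$-inequality via a half-space argument. None of these is supplied, and each is genuinely nontrivial. For (a), $u_1=g(v_{s'})$ for some $g\in W$ and $s'\in S$, so $\scalprod{u_1}{\alpha_s}$ is the coefficient of $\alpha_{s'}$ in the simple-root expansion of $g^{-1}(\alpha_s)$; there is no a priori reason this should be $-1$, and in fact this normalization is exactly part of what the lemma asserts, so proving it directly is on the same order of difficulty as the lemma itself. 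For (c), there is also a notational slip: $\nu_{u_1}=\scalprod{\pmb a}{v_{s'}}$ depends only on the $W$-orbit of $u_1$, not on $\scalprod{\pmb a}{u_1}$, so the inequality $\scalprod{\pmb a}{u_1}<\scalprod{\pmb a}{v_s}$ you propose is not the quantity that feeds into Lemma~\ref{lem:equivalent_condition}(iv).

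The paper does something quite different in Case 3 and it is worth knowing why. It first shows $t=s$ and $u_1'=v_s$, then applies the Reading--Speyer map $\zeta_s$ (from Lemma~\ref{lem:six}) to produce the pair $C_{scs}(w)$ and $C_{scs}(sw)$ in $\F_{scs}$ with extremal rays $\{-v_s,u_2,\dots,u_n\}$ and $\{s(u_1),u_2,\dots,u_n\}$. The subtle step is proving that this $scs$-pair is never again in Case 3 for any letter $z$ initial in $scs$ (because $u_2,\dots,u_n\in H_s$ and the only ray off $H_s$ is on the same side of each $H_z$ for both cones), so the already-established Case 1 or Case 2 argument applies and yields the sign-definite expansion; one then transports it back via $s$. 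The $\nu$-inequality in Case 3 is proved by yet another route: a maximal chain $s=y_0\lessdot\cdots\lessdot y_p=w$ entirely above $H_s$, along which $\scalprod{x(C(y_i))}{v_s}$ is weakly decreasing, combined with the strict base inequality from Lemma~\ref{lem:s_initial_for_c}, giving $\scalprod{x(C(w'))-x(C(w))}{v_s}>0$, which is equivalent to the desired inequality by Lemma~\ref{lem:equivalent_condition}. You should either adopt this two-pronged strategy or supply complete proofs of (a)--(c); as written, Case 3 is a gap.
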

\begin{proof}
    The proof is an induction on the rank $n=|S|$ and the length $\ell(w)$.

    If $|S|=1$ then the result is clear, so assume that $S=\{ s_1, \ldots, s_n\}$ with $n > 1$ and
    $\ell(w)=1$. Assume without loss of generality that $w=s_1$, and since $w$ covers $w'$, $w'=e$.
    If $w$ is initial for~$c$ then
    we are done by Lemma~\ref{lem:s_initial_for_c}. So assume that $w$ is not initial for~$c$.
    Then we have $u_i=v_{s_i}$, for $2\leq i\leq n$, $u'_1=v_{s_1}$, and $u_1=u$ for 
    some $u \in \mathcal R$.  Thus the maximal cones~$C(e)$ and~$C(w)$ are generated 
    by $\{ v_{s_1}, v_{s_2}, \ldots , v_{s_n}\}$ and $\{ u, v_{s_2},\ldots,v_{s_n}\}$.
    For sake of definiteness, suppose $s=s_2$ is initial in~$c$. Then~$C(e)$ and~$C(w)$ 
    are both below~$H_s$.
    By Lemma~\ref{lem:cones}, we have maximal cones $C_{sc}(e)=C(e)\cap H_s$
    and $C_{sc}(w)=C(w)\cap H_s$ in the $sc$-Cambrian fan~$\F_{sc}$ of~$W_{\la s \ra}$ and these
    cones are generated by $\{ v_{s_1}, v_{s_3}, \ldots , v_{s_n}\}$ and
    $\{ u, v_{s_3}, \ldots , v_{s_n}\}$. So by induction on the rank of~$|S|$, we obtain
    the claim with $b_2=0$.

    \medskip
    For the induction, we assume that the claim is true whenever $\widetilde w$ is $\widetilde c$-sortable
    for a Coxeter group generated by $\widetilde S$ with $|\widetilde S| < |S|$ or $\widetilde w$ is a
    $c$-sortable element with $\ell(\widetilde w) < \ell(w)$.

    Assume $w, w^\prime \in W$ are $c$-sortable with $\ell(w)>1$ and $w$ covers $w^\prime$ in the
    lattice of $c$-sortable elements. Let $s \in S$ be initial in~$c$. We split into cases based 
    on the position of~$C(w)$ and~$C(w')$ relative to~$H_s$. Note that~$C(w)$ below~$H_s$
    and~$C(w^\prime)$ above~$H_s$ is not possible since~$w$ covers~$w^\prime$ in the $c$-Cambrian
    lattice.

    \textbf{Case~$\pmb 1$:} Suppose $C(w)$ and $C(w^\prime)$ are above~$H_s$.
    The ray~$\rho_s$ is strictly below~$H_s$ by~\cite[Lemma~$6.3$]{reading4},
    so $v_s \not\in \{ u^\prime_1, u_1, \ldots , u_n \}$. Moreover,
    we conclude from Lemma~\ref{lem:six} that the maximal cones $C_{scs}(sw)$ and $C_{scs}(sw^\prime)$
    in~$\F_{scs}$ are generated by $\{ s(u_1), \ldots, s(u_n)\}$ and
    $\{ s(u_1'), s(u_2), \ldots, s(u_n)\}$ since
    $\ell(sw)<\ell(w)$, $\ell(sw^{\prime})<\ell(w^\prime)$ and $w,w^\prime > s$ in the right weak order.
    We have $C_{scs}(sw)\cap C_{scs}(sw^\prime) \subset H_{sts}$ because $C(w)\cap C(w^\prime) \subset H_{t}$.
    By induction on the length, we have
    \[
      s(u_1) + s(u_1^\prime) = \sum_{i=2}^{n}b_is(u_i)
      \quad \text{ and } \quad
      \nu_{s(u_1)} + \nu_{s(u_1^\prime)} > \sum_{i=2}^{n}b_i\nu_{s(u_i)} \text{ with $b_i \geq 0$.}
    \]
    Applying~$s$ to these (in)equalities yields
    \[
      u_1 + u_1^\prime = \sum_{i=2}^{n}b_iu_i \in H_s
      \quad \text{ and } \quad
      \nu_{u_1} + \nu_{u_1^\prime} > \sum_{i=2}^{n}b_i\nu_{u_i} \text{ with $b_i \geq 0$,}
    \]
    since $\nu_{u}$ depends only on the orbit of $u$ under the action of $W$.

    \textbf{Case~$\pmb 2$:} $C(w)$ and $C(w^\prime)$ are below~$H_s$. Since~$w$ is $c$-sortable 
    and $\ell(sw)>\ell(w)$, we have that $w\in W_{\la s\ra}$, and similarly for~$w'$. The ray~$\rho_{s}$ 
    is the only ray of~$\F_c$ strictly below~$H_s$ by~\cite[Lemma~$6.3$]{reading4}, hence we may assume 
    that $u_2=v_s$. Now $\{ u_1, u_3, \ldots , u_n \}$ and $\{ u_1^\prime, u_3,\ldots , u_n \}$ generate 
    the extremal rays of maximal cones $C_{sc}(\widetilde w), C_{sc}(\widetilde w^\prime) \subset H_s$ of 
    the $sc$-Cambrian fan~$\F_{sc}$ with $\widetilde w,\widetilde w^\prime \in W_{\la s \ra}$. The claim 
    follows by induction on the rank~$|S|$.

    \textbf{Case~$\pmb 3$:} $C(w)$ is above~$H_s$ and $C(w^\prime)$ is below~$H_s$.
    Hence~$C(w)$ and~$C(w^\prime)$ are separated by~$H_s$, so we have $s=t$. Hence
    $u^\prime_1=v_s$ ($\rho_{s}$ is the only ray of~$\F_c$ below~$H_s$) and there is 
    a maximal cone~$C_{scs}(g)$ for some $scs$-sortable element~$g\in W$ which is 
    generated by the extremal rays $\zeta_s(u^\prime_1),\zeta_s(u_2),\dots,\zeta_s(u_n)$. 
    Now, observe that
    \begin{align*}
        \zeta_s(u_1)&=s(u_1), \quad\\
        \zeta_s(u_1 ^\prime)&=-u_1^\prime=-v_s, \\
        \text{and }\zeta_s(u_i)&=u_i \text{ for $2\leq i \leq n$}
    \end{align*}
    Thus the maximal cones~$C_{scs}(g)$ and $C_{scs}(sw)$ have extremal rays generated by
    $-v_s,u_2,\dots,u_n$ and    $s(u_1), u_2,\dots,u_n$. Moreover, 
    $C_{scs}(g) \cap C_{scs}(sw) \subseteq H_s$.

    We first show that $g=w$. By definition of $\F_c$, we have $sw(D)\subset C_{scs}(sw)$.
    From~$C_{scs}(sw) \cap C_{scs}(g) \subseteq H_s$ we deduce that
    $w(D)\subset C_{scs}(g)$ or equivalently
    $w \in (\pi^{scs}_\downarrow)^{-1}(g)$. Now $g > sw$ implies $h>sw$ for all
    $h \in (\pi^{scs}_\downarrow)^{-1}(g)$. But~$C(w)$ is above~$H_s$, so~$h<w$ implies
    $h \not\in (\pi^{scs}_\downarrow)^{-1}(g)$. Hence~$w$ is the minimal element of
    $[\pi^{scs}_\downarrow(g),\pi^\uparrow_{scs}(g)]$ and we have $w=g$.

    Though~$C_{scs}(w)\cap C_{scs}(sw)\subseteq H_s$,  it is not possible to apply the induction
    hypothesis immediately, since the length~$\ell(w)$ has not been
    reduced, but we claim that for any~$z \in S$ initial in $scs$ either~$C_{scs}(w)$ and~$C_{scs}(sw)$ are both
    above~$H_z$ or both below~$H_z$. From $z \in S\setminus \{s\}$
    we conclude that $v_z \in H_s$.  We know that $u_2,\dots,u_n\in H_s$
    and $u_1^\prime,u_1,s(u_1) \not\in H_s$. So $v_z \in C_{scs}(w)$
    if and only if $v_z \in C_{scs}(sw)$. Since $v_z$ is the only ray of~$\F_{scs}$ below the hyperplane~$H_z$,
    we have shown that~$C_{scs}(w)$ and~$C_{scs}(sw)$ are on the same side of~$H_z$.

    This implies that we are now in Case~$1$ or Case~$2$, so we first apply the
    argument of the relevant case to apply the induction hypothesis and conclude
    \[
      \zeta_s(u_1)+\zeta_s(u_1^\prime)
            = \sum_{i=2}^n b_i u_i\in H_s \quad\text{with $b_i\geq 0$}.
    \]
    Therefore
    \begin{align*}
      s(u_1+u_1^\prime)
                        &= \left( \zeta_s(u_1) + \zeta_s(u_1^\prime) \right)
                                                         + \left( v_s + s(v_s) \right) \in H_s
    \end{align*}
    and $u_1 + u_1^\prime= \sum_{i=2}^n b_i u_i \in H_t$ with $b_i \geq 0$, since $s=t$.

    \medskip
    It remains to prove $\nu_{u_1}+\nu_{u_1^\prime} > \sum_{i=2}^{n}b_i\nu_{u_i}$.
    By Lemma~\ref{lem:equivalent_condition} it is sufficient to show that $\scalprod{x(C(w'))-x(C(w))}{u'_1} > 0$.

    Recall that  $t=s\in S$. Pick a maximal chain in the $c$-Cambrian lattice
    \[
      y_0 \lessdot y_1 \lessdot \ldots \lessdot y_p
    \]
    with $y_0=s$ and $y_p=w$. Then $s \leq y_i$ for $0 \leq i \leq p$,
    so~$C(y_i)$ is above~$H_s$ for $0 \leq i \leq p$. So
    for the pair $\widetilde w^\prime = y_{i-1}$
    and $\widetilde w = y_i$ we have $z_i :=
    x(C(y_i))-x(C(y_{i-1}))=\mu_i\beta_i$ with $\mu_i>0$ and
    $\beta_i\in \Phi^-$ by Lemma~\ref{lem:equivalent_condition} and Case~$1$ above.
    Now $\scalprod{\beta_i}{v_s}$ is the coefficient of the simple root~$\alpha_s$ 
    in the simple root expansion of $\beta_i$.  Since $\beta_i$ is a negative root,
    $\scalprod{\beta_i}{v_s}\leq 0$. In particular we have
    \[
      \scalprod{x(C(y_{i-1}))}{v_s} \geq \scalprod{x(C(y_{i-1}))}{v_s}+\scalprod{z_i}{v_s}
                                    = \scalprod{x(C(y_{i}))}{v_s}
    \]
    for $1\leq i \leq p$. Hence
    \[
         \scalprod{x(C(e))}{v_s}
                  > \scalprod{x(C(s))}{v_s}
                  \geq \scalprod{x(C(y_2))}{v_s}\geq \dots\geq \scalprod{x(C(w))}{v_s},
    \]
    where the first inequality is Lemma~\ref{lem:s_initial_for_c}. As $u_1^\prime=v_s$ we have
    \[
      \scalprod{x(C(e))}{v_s} = \nu_{v_s}=\nu_{u_1'}=\scalprod{x(C(w^\prime))}{v_s}
    \]
    Thus $\scalprod{x(C(w'))-x(C(w))}{u_1'}>0$.
 \end{proof}

\subsection{On integer coordinates}\label{subsec:integer_coords}

Suppose that~$W$ is a Weyl group and that the root system~$\Phi$
for~$W$ is crystallographic, that is, for any two
roots~$\alpha,\beta\in\Phi$ we have $s_\alpha(\beta)=\beta +
\lambda\alpha$ for some~$\lambda\in\mathbb Z$. The simple
roots~$\Delta$ span the lattice~ $L$ and the  fundamental weights
$v_s$, $s\in S$, span a lattice~$L^*$ which is dual to~$L$.
For~$\beta\in L$ and~$v\in L^*$ we have $\la \beta,v\ra\in\Z$. In
fact,~$\beta\in L$ if and only if $\la \beta,v\ra\in \Z$ for
all~$v\in L^*$. For each ray~$\rho\in\F_c$, we denote by~$v_\rho \in
L^*$ the lattice point closest to the origin.

\begin{lem}
   Let~$\Phi$ be a crystallographic root system for the Weyl group~$W$ and~$c$ a
   Coxeter element of~$W$. The set~$\{v_\rho\mid \rho$ an extremal ray of $C\}$ forms a basis
   of~$L^*$ for each maximal cone~$C\in\F_c$.
\end{lem}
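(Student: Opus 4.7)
The plan is to proceed by induction on the pair $(|S|, \ell(w))$ lexicographically, where $w$ is the $c$-sortable element such that $C = C(w)$. The base case $|S|=1$ is trivial, since $L^* = \Z v_s$ and both maximal cones of $\F_c$ have a single extremal ray, spanned by $\pm v_s$. For the inductive step, I would fix a simple reflection $s$ that is initial in $c$ and split into two cases according to the position of $C(w)$ relative to the hyperplane $H_s$, using Lemma~\ref{lem:w_c_sortable} to identify the sortability structure in each case.

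If $C(w)$ lies below $H_s$, then $w \in W_{\langle s \rangle}$ is $sc$-sortable, and Lemma~\ref{lem:cones} says that $C(w)$ is spanned by $C_{sc}(w) \in \F_{sc}$ together with the ray $\rho_s$. Hence the extremal rays of $C(w)$ are $\rho_s$ together with the extremal rays of $C_{sc}(w)$. The induction hypothesis applied to the parabolic Weyl group $W_{\langle s \rangle}$, whose weight lattice is $L^*_{\langle s \rangle} = \bigoplus_{r \ne s} \Z v_r$ (the $v_r$ for $r \ne s$ lie in $H_s$ and are the fundamental weights for the root subsystem $\{\alpha_r : r \ne s\}$), yields that $\{v_\rho : \rho \in C_{sc}(w)^{(1)}\}$ is a $\Z$-basis of $L^*_{\langle s \rangle}$. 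Appending $v_{\rho_s} = v_s$ then gives a $\Z$-basis of $L^* = L^*_{\langle s \rangle} \oplus \Z v_s$.

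If instead $C(w)$ lies above $H_s$, then $\ell(sw) < \ell(w)$ and $sw$ is $scs$-sortable. Because $\rho_s$ is strictly below $H_s$ by \cite[Lemma~6.3]{reading4}, it is not among the extremal rays of $C(w)$, so $\zeta_s$ acts as $s$ on all of them. Lemma~\ref{lem:six} then identifies the extremal rays of $C_{scs}(sw) \in \F_{scs}$ as $\{s(\rho) : \rho \in C(w)^{(1)}\}$. Applying the induction hypothesis at strictly smaller length (for the Coxeter element $scs$ of the same Weyl group $W$), $\{v_{s(\rho)}\}$ is a $\Z$-basis of $L^*$; since the crystallographic hypothesis guarantees that $s$ preserves $L^*$ and acts as an isometry, we have $v_{s(\rho)} = s(v_\rho)$, and applying the linear automorphism $s$ to this basis transfers it back to $\{v_\rho\}$.

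The main technical subtlety is tracking the induction parameters cleanly: Case~1 strictly reduces the rank but need not reduce the length, while Case~2 strictly reduces the length but replaces the Coxeter element $c$ by $scs$, so lexicographic induction on $(|S|, \ell(w))$ is essential. The other key input, used both to make sense of $L^*_{\langle s\rangle} \subset L^*$ in Case~1 and to transport $\Z$-bases by $s$ in Case~2, is that $\Phi$ being crystallographic forces $W$ to act on $L^*$ by lattice-preserving isometries; given this and the two Cambrian-fan reduction lemmas already available, the argument is otherwise a routine bookkeeping of extremal rays.
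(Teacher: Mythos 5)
Your proof is correct and follows essentially the same route as the paper: induction on rank and length, splitting on whether $C(w)$ is above or below $H_s$ for $s$ initial in $c$, and invoking Lemma~\ref{lem:w_c_sortable}, Lemma~\ref{lem:cones}, and the lattice-preserving action of $s$ to reduce to the induction hypothesis. The only difference is that you spell out the base case, the lexicographic bookkeeping, and the identification $L^*_{\langle s\rangle}=L^*\cap H_s = \bigoplus_{r\ne s}\Z v_r$ a bit more explicitly than the paper does.
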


\begin{proof}
   Let~$C=C(w)$ denote the maximal cone of~$\F_c$ for some $c$-sortable~$w\in W$. The
   proof is by induction on~$\ell(w)$ and the rank of~$W$. Let~$s$ be initial in $c$ and by
   Lemma~\ref{lem:w_c_sortable} we have to distinguish two cases,

   Suppose that~$\ell(sw)<\ell(w)$. Then~$sw$ is $scs$-sortable
   and~$C(w)=s\left(C_{scs}(sw)\right)$. Since the simple reflection~$s$ preserves the lattice,
   the result follows by induction.

   Suppose on the other hand that $\ell(sw)>\ell(w)$. Then the cone~$C(w)$ lies below the
   hyperplane~$H_s$ and $w\in W_{\la s \ra}$ is~$sc$-sortable. Let~$C_{\la s \ra}(w)$ denote
   the maximal cone that corresponds to~$w$ in the Cambrian fan~$\F_{\la s \ra} \subset H_s$
   for~$W_{\la s \ra}$. Then~$C_{\la s \ra}(w) = C(w)\cap H_s$ by Lemma~\ref{lem:cones}.
   The induction hypothesis implies that the extremal rays of~$C_{\la s \ra}(w)$ form a basis
   for the lattice~$L^*_{\la s \ra} \subset H_s$ and~$\rho_s$ is the unique extremal ray of~$C(w)$
   not contained in~$H_s$ by Lemma~\ref{lem:cones}.  Since the fundamental weights $v_t$, $t\in S$,
   span~$L^*$ it follows that~$L^*$ is spanned by~$v_s$ and $L^*_{\la s \ra}=L^*\cap H_s$.
   Hence, the extremal rays of~$C(w)$ span~$L^*$.
\end{proof}

\begin{thm}\label{thm:IntegerCoordinates}
   Let~$\Phi$ be a crystallographic root system for the Weyl group~$W$ and~$c$ a Coxeter element
   of~$W$. Suppose that~$\pmb a \in L$. Then the vertex sets~$V(Perm^{\pmb a}(W))$ and $V(\Ass^{\pmb a}_c(W))$
   are   contained in~$L$.
\end{thm}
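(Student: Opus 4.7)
The plan is to exploit the duality between $L$ and $L^*$: a point $x \in V$ lies in $L$ precisely when $\langle x, v\rangle \in \mathbb Z$ for every $v$ in some basis of $L^*$. The preceding lemma supplies exactly such a basis for each maximal cone of $\F_c$, namely the primitive integer generators of its extremal rays, so the heart of the argument is already in place; what remains is an elementary check in each case.

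For the permutahedron the claim is immediate: the crystallographic integrality condition $s_\alpha(\beta) = \beta - \tfrac{2\langle\alpha,\beta\rangle}{\langle\alpha,\alpha\rangle}\alpha \in L$ for $\alpha,\beta \in \Phi$ guarantees that $W$ preserves $L$, so every vertex $M(w)=w(\pmb a)$ lies in $L$ whenever $\pmb a$ does.

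For the associahedron, I would first observe that each extremal ray $\rho = w(\rho_s)$ of a maximal cone $C \in \F_c$ has primitive integer generator $v_\rho = w(v_s)$: indeed $v_s$ is part of the dual basis of $L^*$ (hence primitive), and $W$ preserves $L^*$ because it preserves $L$. The vertex $x(C)$ is characterized by the equations $\langle x(C), v_\rho\rangle = \langle \pmb a, v_s\rangle$ as $\rho = w(\rho_s)$ ranges over the extremal rays of $C$, and each right-hand side lies in $\mathbb Z$ since $\pmb a \in L$ and $v_s \in L^*$. Combined with the preceding lemma, which asserts that these $v_\rho$ form a basis of $L^*$, the integrality of all such pairings forces $x(C) \in L$ by the duality criterion above.

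The main obstacle has really been absorbed into the preceding lemma, whose proof proceeds inductively by passing between Cambrian fans of $W$ and of $W_{\la s\ra}$ (via Lemmas~\ref{lem:w_c_sortable} and~\ref{lem:cones}). Once that lemma is in hand, Theorem~\ref{thm:IntegerCoordinates} follows with no further effort.
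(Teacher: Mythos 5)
Your proof is correct and takes essentially the same route as the paper: invoke the preceding lemma that the primitive ray generators of a maximal Cambrian cone form a $\mathbb Z$-basis of $L^*$, observe that $x(C)$ pairs integrally against each of them because $\pmb a \in L$, and conclude $x(C)\in L$ by duality. You are in fact slightly more careful than the paper's own wording, which passes silently from $\langle x(C), w(v_s)\rangle = \langle \pmb a, v_s\rangle \in \mathbb Z$ to the integrality of $\langle x(C), y_i\rangle$ for the primitive generator $y_i$; your explicit observation that $w(v_s)$ is already the primitive generator (since $v_s$ is primitive and $W$ preserves the dual lattice $L^*$ along with $L$) closes that small gap cleanly.
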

\begin{proof}
   The result for the permutahedron is classic.
   Let~$w\in W$ be $c$-sortable,~$x(w)$ be the vertex of $\Ass^{\pmb a}_c (W)$ contained in the
   maximal cone~$C(w)\in \F_c$, and~$\rho_i$, $1 \leq i \leq n$ be the extremal rays of~$C(w)$.
   Denote the lattice point on~$\rho_i$ closest to the origin by~$y_i$. The point~$x(w)$ satisfies
   $\la x(w),y_i\ra=c_i$ for some integer~$c_i$ since ${\pmb a}\in L$. Because~$\{y_i\}$,
   $1 \leq i \leq n$, is a basis of~$L^*$, this set of equations for $x(w)$ has an integral
   solution. In other words, $x(w)\in L$.
\end{proof}

\section{Observations and remarks}\label{se:Remaarks}

\subsection{Recovering the $c$-cluster complex from the $c$-singletons}

\begin{figure}[b]
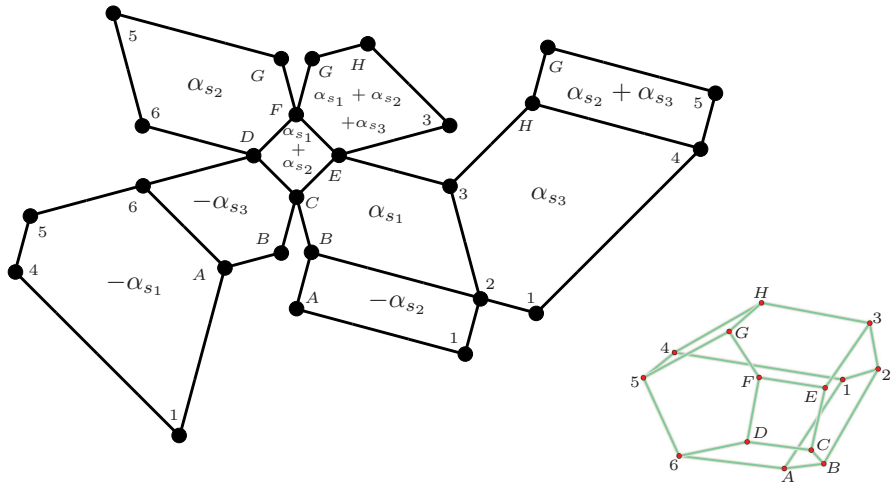

      \begin{center}
      \begin{minipage}{0.95\linewidth}
         \begin{center}
         \begin{overpic}
            [width=\linewidth]{unfolded_a3_asso_II.eps}
            \put(29.5,24){\tiny $A$}
            \put(42,21.5){\tiny $A$}
            \put(36.5,28){\tiny $B$}
            \put(43.5,28){\tiny $B$}
            \put(42,32){\tiny $C$}
            \put(34.7,39.3){\tiny $D$}
            \put(44.5,35){\tiny $E$}
            \put(38,42.3){\tiny $F$}
            \put(36,46){\tiny $G$}
            \put(43.5,46.5){\tiny $G$}
            \put(69,47){\tiny $G$}
            \put(47,47.3){\tiny $H$}
            \put(65.5,40.5){\tiny $H$}
            \put(58,17){\tiny $1$}
            \put(66.5,21.5){\tiny $1$}
            \put(27,8.5){\tiny $1$}
            \put(62,23){\tiny $2$}
            \put(59,33){\tiny $3$}
            \put(55,41.2){\tiny $3$}
            \put(82.5,37){\tiny $4$}
            \put(11.5,24.5){\tiny $4$}
            \put(85,43){\tiny $5$}
            \put(12.5,28.8){\tiny $5$}
            \put(22.5,50.5){\tiny $5$}
            \put(22.5,31.5){\tiny $6$}
            \put(25,42){\tiny $6$}
            \put(77,1){\begin{overpic}[width=4cm]{a3_tamari_100.eps}
                           \put(53,2){\tiny $A$}
                           \put(68,5){\tiny $B$}
                           \put(64.8,12.5){\tiny $C$}
                           \put(43.5,16.5){\tiny $D$}
                           \put(60.5,28.5){\tiny $E$}
                           \put(39.5,33.5){\tiny $F$}
                           \put(37.7,49.7){\tiny $G$}
                           \put(43.8,63){\tiny $H$}
                           \put(73.5,31){\tiny $1$}
                           \put(86.5,36){\tiny $2$}
                           \put(83.5,55){\tiny $3$}
                           \put(13,45){\tiny $4$}
                           \put(3,33.5){\tiny $5$}
                           \put(16,6){\tiny $6$}
                        \end{overpic}}
            \put(20,23){$-\a_{s_1}$}
            \put(43,44){\tiny $\a_{s_1}+\a_{s_2}$}
            \put(46,41){\tiny $+\a_{s_3}$}
            \put(49,31){$\a_{s_1}$}
            \put(67,33){$\a_{s_3}$}
            \put(71,44){$\a_{s_2}+\a_{s_3}$}
            \put(39.5,40){\tiny $\a_{s_1}$}
            \put(40.5,38){\tiny $+$}
            \put(39.5,36.5){\tiny $\a_{s_2}$}
            \put(49,21){$-\a_{s_2}$}
            \put(29.5,32){$-\a_{s_3}$}
            \put(29,45){$\a_{s_2}$}
         \end{overpic}
         \end{center}
         \caption[]{An unfolding of the associahedron~$\Ass^{\pmb a}_{c}(S_4)$ with $c=s_1s_2s_3$,
                    the polar of the $c$-cluster complex. The $2$-faces are labeled by replacing the
                    labels~$w(\rho_s)$ in Figure~\ref{fig:unfolded_a3_asso_123} by the almost positive
                    root~$\lr_s(w)$. }
         \label{fig:unfolded_a3_asso_123-II}
      \end{minipage}
      \end{center}
\end{figure}

It is possible to obtain polytopal realizations of the $c$-cluster
complex from the construction of generalized associahedra presented
as follows. Suppose that we are given
a $W$-permutahedron~$\Perm^{\pmb a}(W)$, a
Coxeter element~$c$, and the $c$-sorting word~$\ww$ of~$w_0$.
Then we can easily compute all $c$-singletons using the
characterization given in Theorem~\ref{thm:cSingleton}. The
associahedron~$\Ass^{\pmb a}_c(W)$ is now obtained from~$\Perm^{\pmb
a}(W)$ by keeping all admissible inequalities for the permutahedron,
that is all inequalities~$\la v,w(v_s)\ra \leq \la \pmb a,v_s\ra$
for~$c$-singleton~$w$. We label the facet~$\la v,w(v_s)\ra \leq
\la \pmb a,v_s\ra$ of~$\Ass^{\pmb a}_c(W)$ by the almost positive
root~$\lr_s(w)$ and extend this labeling to the Hasse diagram
of~$\Ass_c^{\pmb a}(W)$ as follows: if a face~$f$ is the
intersection of facets~$F_1, \ldots, F_k$ then assign~$f$ the union
of the almost positive roots assigned to~$F_1, \ldots,F_k$. By
Theorem~\ref{thm:Explif_c}, this labeling matches the labeling of
the $c$-Cambrian fan by almost positive roots given by Reading and
Speyer. Therefore, the opposite poset of this labeled Hasse diagram
is the face poset of the $c$-cluster complex because it is the face
poset of the $c$-Cambrian fan~$\F_c$. The polar of~$\Ass_c^{\pmb
a}(W)$ is therefore a polytopal realization of the $c$-cluster
complex. In particular, a set of almost positive roots is
$c$-compatible (see~\cite{reading2}) if and only if it can be obtained
as the intersection of some facets of $\Ass_c^{\pmb a}$ by the
process described above.

\medskip
We illustrate the recovery of the $c$-cluster complex for~$W=S_4$.
Figure~\ref{fig:unfolded_a3_asso_123-II} refers to the Coxeter element~$c=s_1s_2s_3$ and
Figure~\ref{fig:unfolded_a3_asso_213-II} refers to the Coxeter element~$c=s_2s_1s_3$.
We use the polar of the $c$-cluster complex for the illustration.

\begin{figure}
      \begin{center}
      \begin{minipage}{0.95\linewidth}
         \begin{center}
         \begin{overpic}
            [width=\linewidth]{unfolded_a3_asso_I.eps}
            \put(55,18){\tiny $A$}
            \put(62,18){\tiny $A$}
            \put(11.5,31){\tiny $A$}
            \put(60.5,22){\tiny $B$}
            \put(53,25){\tiny $C$}
            \put(63.5,25){\tiny $D$}
            \put(60.5,32){\tiny $E$}
            \put(62,36.5){\tiny $F$}
            \put(55,36.5){\tiny $F$}
            \put(48,39.7){\tiny $G$}
            \put(60.5,44){\tiny $H$}
            \put(68.5,39.5){\tiny $H$}
            \put(47.5,44.5){\tiny $I$}
            \put(53.5,44){\tiny $I$}
            \put(70,44.5){\tiny $I$}
            \put(8,46.5){\tiny $1$}
            \put(77.5,13.5){\tiny $1$}
            \put(87,35.5){\tiny $1$}
            \put(74.5,22){\tiny $2$}
            \put(76.5,33){\tiny $2$}
            \put(91,50){\tiny $3$}
            \put(23.5,50){\tiny $3$}
            \put(40.5,13.5){\tiny $4$}
            \put(28,35){\tiny $4$}
            \put(43,22){\tiny $5$}
            \put(44.5,31.5){\tiny $5$}
            \put(77,1){\begin{overpic}
                            [width=3.5cm]{a3_permutahedron_100_removed_colourless.eps}
                            \put(58,0){\tiny $A$}
                            \put(53,8){\tiny $B$}
                            \put(39,18){\tiny $C$}
                            \put(73,18){\tiny $D$}
                            \put(62,39){\tiny $E$}
                            \put(61,52){\tiny $F$}
                            \put(37,59){\tiny $G$}
                            \put(74,59){\tiny $H$}
                            \put(55,63.5){\tiny $I$}
                            \put(87,17){\tiny $1$}
                            \put(97,31){\tiny $2$}
                            \put(52,25){\tiny $3$}
                            \put(14,29){\tiny $4$}
                            \put(21,15){\tiny $5$}
            \end{overpic}}
            \put(15,41){$-\a_{s_2}$}
            \put(36,41){$\a_{s_1}$}
            \put(46,33.4){$\a_{s_1}+\a_{s_2}$}
            \put(61,34){$\a_{s_2}+\a_{s_3}$}
            \put(77,41){$\a_{s_3}$}
            \put(57.5,28){$\a_{s_2}$}
            \put(47,20){$-\a_{s_3}$}
            \put(67,20){$-\a_{s_1}$}
            \put(54,49){\tiny $\a_{s_1}+\a_{s_2}$}
            \put(58.3,47){\tiny $+\a_{s_3}$}
         \end{overpic}
         \end{center}
         \caption[]{An unfolding of the associahedron~$\Ass^{\pmb a}_{c}(S_4)$ with~$c=s_2s_1s_3$, the
                    polar of the $c$-cluster complex. The $2$-faces are labeled by replacing the
                    labels~$w(\rho_s)$ in Figure~\ref{fig:unfolded_a3_asso_213} by the almost positive
                    root~$\lr_s(w)$. }
         \label{fig:unfolded_a3_asso_213-II}
      \end{minipage}
      \end{center}
\end{figure}

First consider the Coxeter element $c=s_1s_2s_3$. The facets are labeled by almost positive
roots as indicated. The vertices correspond to clusters as follows:
{\small
\begin{alignat*}{2}
   A &= \{-\a_{s_1},-\a_{s_2},-\a_{s_3}\},
             &\quad B &= \{\a_{s_1},-\a_{s_2},-\a_{s_3}\} \\
   C &= \{\a_{s_1},\a_{s_1}+\a_{s_2},-\a_{s_3}\},
             &\quad D &= \{\a_{s_2},\a_{s_1}+\a_{s_2},-\a_{s_3}\},\\
   E &= \{\a_{s_1},\a_{s_1}+\a_{s_2},\a_{s_1}+\a_{s_2}+\a_{s_3}\},
             &\quad F &= \{\a_{s_2},\a_{s_1}+\a_{s_2},\a_{s_1}+\a_{s_2}+\a_{s_3}\},\\
   G &= \{\a_{s_2},\a_{s_2}+\a_{s_3},\a_{s_1}+\a_{s_2}+\a_{s_3}\},
             &\quad H &= \{\a_{s_3},\a_{s_2}+\a_{s_3},\a_{s_1}+\a_{s_2}+\a_{s_3}\},\\
   1 &= \{-\a_{s_1},-\a_{s_2},\a_{s_3}\},
             &\quad 2 &= \{\a_{s_1},-\a_{s_2},\a_{s_3}\},\\
   3 &= \{\a_{s_1},\a_{s_1}+\a_{s_2}+\a_{s_3},\a_{s_3}\},
             &\quad 4 &= \{-\a_{s_1},\a_{s_2}+\a_{s_3},\a_{s_3}\},\\
   5 &= \{-\a_{s_1},\a_{s_2},\a_{s_2}+\a_{s_3}\},
             &\quad 6 &= \{-\a_{s_1},\a_{s_2},-\a_{s_3}\}.
\end{alignat*}
}
Second consider the Coxeter element $c=s_2s_1s_3$. The facets are labeled by almost positive
roots as indicated and the vertices correspond to clusters as follows:
{\small
\begin{alignat*}{2}
   A &= \{-\a_{s_1},-\a_{s_2},-\a_{s_3}\},
             &\quad B &= \{-\a_{s_1},\a_{s_2},-\a_{s_3}\} \\
   C &= \{\a_{s_1}+\a_{s_2},\a_{s_2},-\a_{s_3}\},
             &\quad D &= \{-\a_{s_1},\a_{s_2},\a_{s_2}+\a_{s_3}\},\\
   E &= \{\a_{s_1}+\a_{s_2},\a_{s_2},\a_{s_2}+\a_{s_3}\},
             &\quad F &= \{\a_{s_1}+\a_{s_2},\a_{s_1}+\a_{s_2}+\a_{s_3},\a_{s_2}+\a_{s_3}\},\\
   G &= \{\a_{s_1},\a_{s_1}+\a_{s_2},\a_{s_1}+\a_{s_2}+\a_{s_3}\},
             &\quad H &= \{\a_{s_3},\a_{s_2}+\a_{s_3},\a_{s_1}+\a_{s_2}+\a_{s_3}\},\\
   I &= \{\a_{s_1},\a_{s_1}+\a_{s_2}+\a_{s_3},\a_{s_3}\},
             &\quad 1 &= \{-\a_{s_1},-\a_{s_2},\a_{s_3}\},\\
   2 &= \{-\a_{s_1},\a_{s_2}+\a_{s_3},\a_{s_3}\},
             &\quad 3 &= \{\a_{s_1},-\a_{s_2},\a_{s_3}\},\\
   4 &= \{\a_{s_1},-\a_{s_2},-\a_{s_3}\},
             &\quad 5 &= \{\a_{s_1},\a_{s_1}+\a_{s_2},-\a_{s_3}\}.
\end{alignat*}}

\subsection{A conjecture about vertex barycentres}\label{su:Dihedral}

J.-L.~Loday mentions in~\cite{loday} that F.~Chapoton observed the following: the vertex
barycentres of the permutahedron and associahedron coincide in the case of Loday's original
realization of the (classical) type~$A$ associahedron. The first two authors observed the
same phenomenon for the realizations of type~$A$ and~$B$ associahedra described in~\cite{HL}.
None of these observations have been proven so far. Checking numerous examples in GAP~\cite{gap},
we observed that the vertex barycentre of~$\Perm^{\pmb a}(W)$ and~$\Ass^{\pmb a}_c(W)$ coincide
for~$\pmb a=\sum_{s\in S} av_s$, $a>0$. The cases include types~$A_n$ ($n\leq 7$),~$B_n$
and~$D_n$ ($n\leq 5$),~$F_4$,~$H_3$,~$H_4$, and dihedral groups~$I_2(m)$. The experiments
can be summarized in the following conjecture.

\begin{conj}
   Let~$W$ be a Coxeter group and~$c\in W$ a Coxeter element. Choose a real number~$a>0$ and
   set~$\pmb a = \sum_{s\in S} av_s$ to fix a realization of the permutahedron~$\Perm^{\pmb a}(W)$.
   Then the vertex barycentres of~$\Perm^{\pmb a}(W)$ and~$\Ass^{\pmb a}_{c}(W)$ coincide.
\end{conj}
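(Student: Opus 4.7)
The plan begins with a simple reduction. Since $\frac{1}{|W|}\sum_{w \in W} w(\pmb a)$ is $W$-invariant and $W$ acts essentially on $V$, the only $W$-fixed vector is the origin; hence the vertex barycentre of $\Perm^{\pmb a}(W)$ is $0$. The conjecture thus amounts to showing
\[
   G_c \;:=\; \sum_{w\ c\text{-sortable}} x(C(w)) \;=\; 0
\]
for $\pmb a = a\sum_{s \in S} v_s$.

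My first attempt would be an induction on the rank $n = |S|$, modelled on the inductive architecture of Section~\ref{subsec:proof}. Fix $s \in S$ initial in $c$. By Lemma~\ref{lem:w_c_sortable}, the $c$-sortable elements partition into family (a), those $w$ with $\ell(sw) < \ell(w)$, which biject via $w \mapsto sw$ with the $scs$-sortable elements lying in $W_{\la s\ra}$, and family (b), those $w \in W_{\la s\ra}$, which are exactly the $sc$-sortable elements of the parabolic subgroup. The crucial geometric input is that the level $\lambda_\rho$ of a ray $\rho = w'(\rho_t)$ depends only on $t \in S$; combined with Lemma~\ref{lem:six}, this gives $s \cdot x(C(w)) = x(C_{scs}(sw))$ for $w$ in family (a), so that $G_c^{(a)} = s \cdot \sum_{w' \in W_{\la s\ra},\ scs\text{-sortable}} x(C_{scs}(w'))$. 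For family (b), Lemma~\ref{lem:cones} and the decomposition $V = H_s \oplus \mathbb{R}\alpha_s$ let one write $x(C(w)) = y(w) + \mu(w)\alpha_s$, where $y(w) \in H_s$ is determined by the constraints from the extremal rays of $E(w) = C(w) \cap H_s$ (which, under the natural identification $H_s \cong V_{\la s\ra}$ by orthogonal projection along $\mathbb{R}v_s$, are the rays of the parabolic cone $C_{sc}(w)$), and $\mu(w) = \lambda_s - \langle y(w), v_s \rangle$ enforces the remaining constraint. Since $\pmb a' := a\sum_{t \in \la s\ra} v_t^{W_{\la s\ra}}$ is again of the required symmetric form, one would invoke induction on rank for the parabolic associahedron.

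The principal obstacle is that the identification $H_s \leftrightarrow V_{\la s\ra}$ is \emph{not} level-preserving: in general $\langle v_r^W, v_t^W \rangle \neq \langle v_r^{W_{\la s\ra}}, v_t^{W_{\la s\ra}} \rangle$, so $y(w)$ is not simply the push-forward of the true parabolic vertex $x_{sc}(C_{sc}(w))$. Consequently $\mu(w)$ varies with $w$, and applying the inductive hypothesis to the $H_s$-projection of $G_c^{(b)}$ leaves a residual $\alpha_s$-component to be matched against the $\alpha_s$-component of $s \cdot G_c^{(a)}$. Since $G_c^{(a)}$ is a sum over a strict subfamily of $scs$-sortable elements (those in $W_{\la s\ra}$), one cannot apply the inductive hypothesis to it directly; the required cancellation therefore depends on a combinatorial identity weighting counts of sortable elements in various parabolic subgroups by inner products of fundamental weights. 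Establishing this identity is the main technical hurdle, and in my view the reason the conjecture has remained open.

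A possible alternative route is to derive, in the spirit of Loday's algorithm, a closed-form expression for $x(C(w))$ from the $c$-sorting word of $w$, generalizing the formulas used in type $A$ and extended to types $A$ and $B$ by the first two authors in~\cite{HL}, and then to verify $G_c = 0$ by a direct combinatorial pairing of summands. This is the only method by which the analogous barycentre coincidence has been verified to date, and lifting it to all finite Coxeter groups and arbitrary Coxeter elements appears to require a uniform combinatorial description of the coordinates of the vertices $x(C(w))$ that is not yet available.
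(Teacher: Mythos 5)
The statement you are asked to prove is stated in the paper as an explicitly unproven \emph{conjecture}: the authors write that ``none of these observations have been proven so far'' and that they have only checked the claim numerically for small-rank examples. The one case the paper actually proves is the dihedral family $I_2(m)$, by the short trigonometric computation at the end of~\S\ref{su:Dihedral}: with $c=st$, the non-$c$-singleton permutahedron vertices are $M(t), M(ts), \dots$ of lengths $1,\dots,m-1$, and one checks that $\sum_{k=1}^{m-1} e^{-ik\pi/m}=P$, the single vertex of $\Ass^{\pmb a}_c(G_m)$ not shared with the permutahedron. Your proposal does not give a proof either, and you say so honestly, so there is no silent gap to flag; but you also do not reproduce the one piece of content the paper does supply, namely the dihedral verification, which is what this section expects of a reader.

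As an analysis of the difficulty, your write-up is sound. The reduction to $\sum_{w\ c\text{-sortable}} x(C(w))=0$ is correct: the permutahedron barycentre is $W$-invariant and $W$ acts essentially, so it is the origin. The inductive architecture you propose, using Lemma~\ref{lem:w_c_sortable} to split the $c$-sortable elements along an initial $s$ and then Lemma~\ref{lem:six} and Lemma~\ref{lem:cones} to pass to a rank-$(n-1)$ Cambrian fan in $H_s$, is the natural one, and the obstruction you isolate is genuine: the right-hand-side constants $\nu_u=\scalprod{\pmb a}{v_s}$ do not restrict to the intrinsic fundamental-weight levels of $W_{\la s\ra}$, so $x(C(w))\cap H_s$ is not the vertex of a parabolic associahedron of the prescribed symmetric form and the induction hypothesis does not directly apply; the residual $\alpha_s$-component has to cancel against the $s$-reflected sum over family~(a), which is not itself a full $scs$-Cambrian barycentre. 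One small inaccuracy: you assert that a Loday-type closed form is ``the only method by which the analogous barycentre coincidence has been verified to date,'' but the paper itself states that even Chapoton's type-$A$ observation was unproven at the time of writing; the only proved case in this paper is dihedral, by a method unrelated to yours. (For the record, the conjecture was subsequently established in full generality --- see V.~Pilaud and C.~Stump, \emph{Vertex barycenter of generalized associahedra}, Proc.\ Amer.\ Math.\ Soc.\ \textbf{143} (2015), 2623--2636, following the type~$A$ proof of Hohlweg, Lortie, and Raymond --- but none of that is available to you from this paper alone.)
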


It is straightforward to prove this conjecture for a special family, the dihedral groups~$G_m$
of order~$2m$. We outline the proof.

Let $m\geq 2$ be an integer. The dihedral group~$G_m$ of order~$2m$ is the finite Coxeter group
of type~$I_2(m)$ generated by the two reflections~$s$ and~$t$ with~$st$ having order~$m$. For
any~$m$, the action of~$G_m$ on~$V=\mathbb R^2$ is essential and we identify~$\mathbb R^2$ with
the the complex numbers. If we define
\[
  v_s := \frac{1+e^{i\tfrac{\pi}{m}}}{2}\quad\textrm{ and }\quad
  v_t := \frac{1+e^{-i\tfrac{\pi}{m}}}{2}
\]
then~$G_m$ is generated by the reflections with respect to the hyperplanes spanned by~$v_s$
and~$v_t$. We choose~$\pmb a = v_s+v_t=1$ and follow our earlier notation where~$M(w)$
denotes the point obtained by the action of~$w\in G_m$ on~$\pmb a$. Then
\[
  M(w) =\begin{cases}
           e^{i\ell(w)\tfrac{\pi}{m}}   &\textrm{if } \ell(sw)<\ell(w)\\
           e^{-i\ell(w)\tfrac{\pi}{m}}  &\textrm{if } \ell(sw)>\ell(w).\\
        \end{cases}
\]
The convex hull of the points~$M(w)$, $w\in G_m$, is the permutahedron~$\Perm^{\pmb a}(G_m)$ which
is a regular $2m$-gon. It is easy to verify that the origin is the vertex barycentre
of~$\Perm^{\pmb a}(G_m)$.

We argue now for the the Coxeter element~$st$; if~$c=ts$, the reasoning is similar. The
$c$-singletons are~$e$ and all $w\in G_m$ with $\ell(sw)<\ell(w)$. The generator~$t$ is
the only $c$-sortable element which is not a $c$-singleton. Denote the intersection of the
line through~$M(e)$ and~$M(t)$ and the line through~$M(w_0)$ and~$M(sw_0)$ by~$P$. The
associahedron~$\Ass_c^{\pmb a}(G_m)$ is the convex hull of the points~$M(w)$, such that $w\in G_m$ is a
$c$-singleton, and~$P$. A straightforward computation yields
\[
  P=\frac{i\sin\left({\tfrac{\pi}{m}}\right)}{\cos\left(\tfrac{\pi}{m}\right)-1}.
\]
and it is not hard to verify that
\[
  \sum_{w\in G_m \atop \text{not $c$-singleton}}M(w) = \sum_{k=1}^{m-1}\left(e^{-i\tfrac{\pi}{m}}\right)^k=P,
\]
so the vertex barycentres of~$\Perm^{\pmb a}(G_m)$ and~$\Ass^{\pmb a}_c(G_m)$ coincide.

\subsection{Recovering the realizations of~\cite{HL} for types~$A$ and~ $B$}\label{su:Our}

\subsubsection{Type A}
Let $\mathcal B=\{e_1,\dots,e_n\}$ be the canonical basis of~$\R^n$. The symmetric group~$S_n$
acts naturally on~$\R^n$ by permutation of the coordinates. We set
\[
  \Delta := \{e_{i+1}-e_i\,|\, 1\leq i \leq n-1\}
  \qquad\textrm{and} \qquad
  \Phi^+ := \{e_j-e_i\,|\, 1\leq i<j\leq n\}.
\]
Then $\Phi=\Phi^+\cup (-\Phi^+)$ is a root system of type~$A_{n-1}$ with simple root system~$\Delta$.
Moreover, we recall that the reflection group~$S_n$ acts essentially on
\[
  V := \R[\Delta] = \left\{ x=(x_1,\dots,x_n)\in\R^n \,\left|\, \sum_{i=1}^n \right. x_i=0\right\}
    \subset \mathbb R^n.
\]

Let~$s_i$ be the simple reflection that maps the simple root $e_{i+1}-e_i$ to $e_i-e_{i+1}$.
The dual basis $\Delta^*$ of $\Delta$ is described by
\[
  v_{s_i} := \frac{i-n}{n}\sum_{k=1}^i e_k+\frac{i}{n}\sum_{k=i+1}^n e_k \in V.
\]
We choose $\pmb a := \sum_{i=1}^{n-1} v_{\tau_i}$ and have
$\pmb a = \sum_{k=1}^n \left(k - \frac{n+1}{2} \right) e_k$.

There is a bijection between Coxeter elements~$c\in S_n$ and orientations of the Coxeter graph
of~$S_n$: if $s_i$ appears before $s_{i+1}$ in a reduced expression of~$c$ then the edge
between~$s_i$ and~$s_{i+1}$ is oriented from~$s_i$ to~$s_{i+1}$. The orientation is from~$s_{i+1}$
to~$s_i$ if~$s_i$ appears after~$s_{i+1}$ in a reduced expression of~$c$. Given an oriented
Coxeter graph, we can apply the construction described earlier and obtain a
permutahedron~$\Perm^{\pmb a}(S_n)$ and an associahedron~$\Ass^{\pmb a}_c (S_n)$.

Consider the affine subspace~$\mathcal V \subset \mathbb R^n$ that is a translate of~$V$
by~$v_G = \frac{n+1}{2}\sum_{i=1}^n e_i$:
\[
  \mathcal V =\left\{ x=(x_1,\dots,x_n)\in\R^n
                   \,\left|\,
                   \sum_{i=1}^n\right. x_i=\frac{n(n+1)}{2}\right\}.
\]
Translate~$\Perm^{\pmb a}(S_n)\subset V$ by~$v_G$ to
obtain~$\Perm^{\pmb a}(S_n)+ v_G\subset \mathcal V$. The vertices
of~$\Perm^{\pmb a}(S_n) + v_G$ are the orbit of~$\pmb a +v_G=\sum_{i=1}^{n}ie_i$ under the
action of~$S_n$, in other words we have
\[
  M(w)=\sum_{i=1}^n w^{-1}(i) e_i
\]
for~$w\in S_n$. The permutahedron~$\Perm^{\pmb a}(S_n)+ v_G$ was used
in~\cite{HL} but the vertices were labeled according to~$w\mapsto w^{-1}$.

\begin{prop}
   Consider a Coxeter element~$c\in S_n$ or equivalently an orientation of the Coxeter graph and
   let~$v_G$ and~$\pmb a$ be as above. The translated associahedron~$\Ass^{\pmb a}_c (S_n)+v_G$ is
   the associahedron~$\Ass_c$ constructed in~\cite{HL}.
\end{prop}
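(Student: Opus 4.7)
The plan is to establish the identification by showing that both polytopes have the same H-representation. Both $\Ass_c^{\pmb a}(S_n)+v_G$ and the polytope $\Ass_c$ of [HL] are constructed as intersections of a subset of the facet-defining half-spaces of the very same (translated) permutahedron $\Perm^{\pmb a}(S_n)+v_G$; the former by Theorem~\ref{thm:main_theorem} (specifically Corollary~\ref{cor:Intro}), the latter directly by the construction in [HL]. Since a simple polytope is determined by its non-redundant H-description, it suffices to verify that the two selected subsets of half-spaces coincide.

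To carry this out, I would first set up a common parametrization of the facets of $\Perm^{\pmb a}(S_n)+v_G$. On our side, each facet corresponds to a ray $\rho=w(\rho_{s_i})\in\F^{(1)}$, well-defined on the coset $wW_{\la s_i\ra}$. On the [HL] side, facets of the permutahedron are indexed by proper nonempty subsets $I\subsetneq\{1,\dots,n\}$. The stabilizer $W_{\la s_i\ra}=S_i\times S_{n-i}$ preserves $\{1,\dots,i\}$ setwise, so the map $w(\rho_{s_i})\mapsto w(\{1,\dots,i\})$ is a well-defined bijection between rays of $\F$ and such subsets. A short calculation, using the explicit formula for $v_{s_i}$ and the choice $\pmb a=\sum v_{s_j}$, then identifies the half-space $\H_\rho^{\pmb a}+v_G$ with the classical permutahedron inequality $\sum_{k\in I}x_k\geq\binom{|I|+1}{2}$ indexed by $I=w(\{1,\dots,i\})$.

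It then remains to show that the two notions of ``admissibility'' agree under this bijection. On our side, a ray $w(\rho_{s_i})$ is retained iff there is a $c$-singleton $w'$ with $w'(\rho_{s_i})=w(\rho_{s_i})$ and $s_i\notin D(w')$; by Theorem~\ref{thm:cSingleton}, such $w'$ are precisely the prefixes of $\ww$ up to commutations. In [HL], the retained facets are indexed directly by a combinatorial family of subsets determined by the orientation of the type~$A$ Coxeter diagram associated to $c$ (equivalently, by the up/down pattern of the simple reflections in the chosen reduced word for $c$). The task is to check that these two families of subsets of $\{1,\dots,n\}$ coincide, which I would do by translating the $c$-sorting recipe for $\ww$ into the combinatorial labels used in [HL] and observing that taking prefixes up to commutation recovers exactly the ``admissible subsets'' of [HL].

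The main obstacle is this last combinatorial matching. Both descriptions arise ultimately from the $c$-Cambrian combinatorics, so coincidence is expected, but making it precise requires careful bookkeeping: one must fix conventions for the correspondence between a Coxeter element, its reduced words, and the induced orientation of the Coxeter graph, then verify that the prefix-up-to-commutation characterization of $c$-singletons translates, under the bijection $w(\rho_{s_i})\leftrightarrow w(\{1,\dots,i\})$, into the explicit subset characterization of [HL]. Once this bijection of admissible families is established, the two polytopes have literally the same H-representation, and equality follows at once.
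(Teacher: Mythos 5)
Your overall framing---both polytopes arise as intersections of subsets of the facet-defining halfspaces of the same (translated) permutahedron, so it suffices to show the two selections coincide---agrees with the spirit of the paper's argument. However, you explicitly flag the decisive step, the combinatorial matching of the two admissibility criteria, as the ``main obstacle'' and do not carry it out, so the proposal is a plan rather than a proof. The paper avoids that bookkeeping entirely by invoking \cite[Proposition~1.3]{HL}, which already establishes that the normal fan of the associahedron constructed in \cite{HL} is the $c$-Cambrian fan~$\F_c$ and that its vertices shared with the permutahedron are precisely the points~$M(w)$ for $c$-singleton~$w$. Once the normal fan is known to be~$\F_c$, the facet-supporting halfspaces are exactly the $\H_\rho^{\pmb a}$ for $\rho \in \F_c^{(1)}$, with offsets inherited from the common permutahedron---this is exactly the description of~$\Ass_c^{\pmb a}(S_n)$ in Corollary~\ref{cor:Intro}, and equality of the two polytopes follows immediately. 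Your proposed translation (prefixes of~$\ww$ up to commutation versus the admissible subsets of~\cite{HL}, under the bijection $w(\rho_{s_i}) \leftrightarrow w(\{1,\dots,i\})$) would, if completed, essentially reprove a piece of \cite[Proposition~1.3]{HL}; it is not wrong, but it is the long road, and its endpoint is missing. One small inaccuracy worth correcting: a ray $w(\rho_{s_i})$ is $c$-admissible as soon as some $c$-singleton $w'$ lies in the coset $wW_{\la s_i\ra}$, since then $M(w')\in H^{\pmb a}_{(w,s_i)}$; the extra requirement $s_i\notin D(w')$ that you impose is not part of the definition and would needlessly exclude valid representatives.
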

\begin{proof}
   In~\cite[Proposition~1.3]{HL}, it was proved that the $c$-singletons are the common vertices
   of the permutahedron and the associahedron and that the normal fan of the latter is~$\F_c$.
   In other words, the realization of the associahedron in~\cite{HL} matches precisely the
   description of $\Ass_c^{\pmb a}(S_n)$ given in Corollary~\ref{cor:Intro}.
\end{proof}

\subsubsection{Type B}
Consider the simple root system of type~$B$ given by
\[
  \Delta':=\{e_{n+1}-e_n\}\cup\{e_{i+1}-e_i+e_{2n+1-i}-e_{2n-i}\,|\, 1\leq i \leq n-1\}
         \subset \mathbb R^{2n}.
\]
If we set~$V':=\mathbb R[\Delta']$ then~$V'$ is a $n$-dimensional subspace of $\mathbb R^{2n}$
which is contained in~$V$, the span of
the type $A_{2n-1}$ root system as in 4.3.1.   Denote the simple reflection that corresponds to~$e_{n+1}-e_n$
by~$s_0$ and the simple reflection that corresponds to $(e_{i+1}-e_i)+(e_{2n+1-i}-e_{2n-i})$
by $s_{n-i}$. The {\em hyperoctahedral group}~$W_n$ (or Coxeter group of type $B_n$) is
generated by these reflections.
It is easy to see that $V'=V\cap \bigcap_{i=1}^{n-1} V^B_i$ where
$V^B_i:=\{ x\in\R^{2n} \,|\, x_i+x_{2n+1-i}=0\}$. In particular we have $\pmb a \in V'$.

The claim that~$\pmb a$ is in the open cone spanned by the fundamental weights of~$\Delta'$
follows from the fact that the scalar product of~$\pmb a$ with any element of~$\Delta'$ is
strictly positive.

A Coxeter element~$c\in W_n$ is related to an orientation of the Coxeter graph of~$W_n$
as in type~$A$: If $s_i$ appears before (resp. after)~$s_{i+1}$ in a reduced expression
of~$c$ then the edge between~$s_i$ and~$s_{i+1}$ is oriented from~$s_i$ to~$s_{i+1}$ (resp.
form~$s_{i+1}$ to~$s_i$). A Coxeter element or an orientation of the Coxeter
graph yields therefore a permutahedron~$\Perm^{\pmb a}(W_n)$ as described in
Section~\ref{subsection:permutahedra} and an associahedron $\Ass^{\pmb a}_c(W_n)$. The
orientation of the Coxeter graph of~$W_n$ determines a symmetric orientation of the Coxeter
graph of~$S_{2n}$ (that is, of type $A_{2n-1}$), and thus a Coxeter element~$\tilde c$ of~$S_{2n}$ and we
have
\[
  \Perm^{\pmb a}(W_n)=\Perm^{\pmb a}(S_{2n})\cap V' 
  \quad\text{and}\quad 
  \Ass^{\pmb a}_c(W_n)=\Ass^{\pmb a}_{\tilde c}(S_{2n})\cap V'.
\]

\medskip
\noindent
The following proposition is a direct consequence from the construction in~\cite{HL}.

\begin{prop}
   Consider a Coxeter element~$c\in W_n$ or equivalently an orientation of the Coxeter graph.
   Let~$v_G$ and~$\pmb a$ as above for type~$A$. The translated
   associahedron~$\Ass^{\pmb a}_c (W_n)+v_G$ is the cyclohedron constructed in~\cite{HL}
   that corresponds to the orientation of the Coxeter graph determined by~$c$.
\end{prop}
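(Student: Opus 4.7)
The plan is to deduce this proposition directly from the preceding type $A$ proposition, together with the intersection identity $\Ass^{\pmb a}_c(W_n) = \Ass^{\pmb a}_{\tilde c}(S_{2n}) \cap V'$ that has just been established, and the explicit construction of the cyclohedron given in~\cite{HL}.

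First I would translate the intersection identity by $v_G$ to obtain
\[
  \Ass^{\pmb a}_c(W_n) + v_G \;=\; \bigl(\Ass^{\pmb a}_{\tilde c}(S_{2n}) + v_G\bigr) \cap (V' + v_G).
\]
Then I would invoke the preceding type $A$ proposition to identify $\Ass^{\pmb a}_{\tilde c}(S_{2n}) + v_G$ with the realization of the classical associahedron given in~\cite{HL} corresponding to the symmetric Coxeter element~$\tilde c$ of $S_{2n}$.

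Next I would recall from~\cite{HL} how the cyclohedron of type~$B_n$ with a prescribed orientation of the Coxeter graph is realized concretely: it arises as the intersection of the type~$A_{2n-1}$ associahedron associated to the symmetric lift~$\tilde c$ with the affine subspace carved out by the relations $x_i + x_{2n+1-i} = 2n+1$ for $1 \leq i \leq n$. A direct computation, using that $v_G = \tfrac{2n+1}{2}\sum_{i=1}^{2n} e_i$, shows that this affine subspace is precisely the translate $V' + v_G$, since $V'$ itself is cut out from $V$ by the homogeneous relations $x_i + x_{2n+1-i}=0$ defining $V^B_i$.

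Combining the two identifications in the displayed equation yields the claim. The only work is the bookkeeping verification that the affine subspace used in~\cite{HL} to carve out the cyclohedron from the classical associahedron coincides with $V' + v_G$; once this is checked, everything else is immediate from the results already in place. I do not anticipate a serious obstacle beyond this careful matching of conventions between the two constructions.
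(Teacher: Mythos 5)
Your proposal is correct and takes essentially the same route as the paper, which dismisses the proposition as a ``direct consequence'' of the identity $\Ass^{\pmb a}_c(W_n)=\Ass^{\pmb a}_{\tilde c}(S_{2n})\cap V'$ together with the type~$A$ proposition and the construction in~[HL]; you simply spell out the translation by $v_G$ and the matching of $V'+v_G$ with the affine subspace $\{x_i+x_{2n+1-i}=2n+1\}$ used in~[HL], which is exactly the bookkeeping the paper leaves implicit.
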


\subsection*{Acknowledgements} The authors thank Nathan Reading and David Speyer for
having made their results in~\cite{reading4} available to us during
their work and for their helpful remarks. In particular, we thank
David Speyer for having conjectured Theorem~\ref{thm:general_result} while
reading an earlier version of this article.

The authors wish to thank the Fields Institute, where much of this
work was done, for its hospitality.


\end{document}